\newtheorem{theorem}{Theorem}[section]
\newtheorem{lemma}[theorem]{Lemma}
\newtheorem{corollary}[theorem]{Corollary}
\newtheorem{prop}[theorem]{Proposition}
\newtheorem{defn}[theorem]{Definition}
\newtheorem{remark}[theorem]{Remark}
\numberwithin{equation}{section}
\newcommand{\dd}[1]{\frac{\partial}{\partial #1}}
\newcommand{\pairing}[2]{\left( #1\, , \, #2 \right)}
\newcommand{\nat}{\mathbb{N}}
\newcommand{\integer}{\mathbb{Z}}
\newcommand{\rat}{\mathbb{Q}}
\newcommand{\real}{\mathbb{R}}
\newcommand{\cpx}{\mathbb{C}}
\newcommand{\consti}{\mathbf{i}\,}
\newcommand{\conste}{\mathbf{e}}
\newcommand{\proj}{\mathbb{P}}
\newcommand{\torus}{\mathbf{T}}
\newcommand{\moduli}{\mathcal{M}}
\newcommand{\point}{\mathrm{pt}}
\newcommand{\rank}{\mathrm{rank}}
\newcommand{\Jac}{\mathrm{Jac}}
\begin{document}

\title[Open GW and mirror map]{Open Gromov-Witten invariants and mirror maps for semi-Fano toric manifolds}

\author[K. Chan]{Kwokwai Chan}
\address{Department of Mathematics\\ The Chinese University of Hong Kong\\ Shatin \\ Hong Kong}
\email{kwchan@math.cuhk.edu.hk}
\author[S.-C. Lau]{Siu-Cheong Lau}
\address{Department of Mathematics and Statistics\\Boston University\\Boston, MA\\USA}
\email{lau@math.bu.edu}
\author[N.C. Leung]{Naichung Conan Leung}
\address{The Institute of Mathematical Sciences and Department of Mathematics\\ The Chinese University of Hong Kong\\ Shatin \\ Hong Kong}
\email{leung@math.cuhk.edu.hk}
\author[H.-H. Tseng]{Hsian-Hua Tseng}
\address{Department of Mathematics\\ Ohio State University\\ 100 Math Tower, 231 West 18th Ave. \\ Columbus \\ OH 43210\\ USA}
\email{hhtseng@math.ohio-state.edu}

\date{\today}

\dedicatory{Dedicated to Prof. Kyoji Saito on the occasion of his 75th birthday.}

\begin{abstract}
We prove that for a compact toric manifold whose anti-canonical divisor is numerically effective, the Lagrangian Floer superpotential defined by Fukaya-Oh-Ohto-Ono \cite{FOOO1} is equal to the superpotential written down by using the toric mirror map under a convergence assumption. This gives a method to compute open Gromov-Witten invariants using mirror symmetry.
\end{abstract}

\maketitle


\section{Introduction} \label{intro}

The PDE approach to mirror symmetry of toric manifolds is a very well-developed subject, see for example \cite{givental98} and \cite{LLY3}. More recently, the Lagrangian Floer theory developed by Fukaya-Oh-Ohta-Ono \cite{FOOO_I, FOOO_II, FOOO1, FOOO2} gives a geometric approach to study mirror symmetry for toric manifolds.  The purpose of this paper is to relate these two seemingly different approaches in the case of compact toric manifolds.  By doing so, we obtain an open analogue of the closed-string mirror symmetry discovered by Candelas-de la Ossa-Green-Parkes \cite{candelas91}.  Namely, under mirror symmetry, the computation of open Gromov-Witten (GW) invariants is transformed into a PDE problem of solving Picard-Fuchs equations.

\subsection{Superpotentials}
Let $X$ be a compact toric manifold of complex dimension $n$ and $q$ a K\"ahler class of $X$.  The mirror of $(X,q)$ is a Landau-Ginzburg model $W_q$, which is a holomorphic function on $(\cpx^\times)^n$.  Closed-string mirror symmetry states that the deformation of $W_q$ encodes closed GW invariants of $X$.  More precisely, there is an isomorphism
$$ QH^* (X,q) \cong \Jac (W_q)$$
as Frobenius algebras, where $QH^* (X,q)$ denotes the small quantum cohomology ring of $(X,q)$ and
$$ \Jac (W_q) := \frac{\cpx[z_1^{\pm 1}, \ldots, z_n^{\pm 1}]}{\left\langle z_1 \frac{\partial W_q}{\partial z_1}, \ldots, z_n \frac{\partial W_q}{\partial z_n} \right\rangle}$$
is the Jacobian ring of $W_q$.

Based on physical arguments, Hori-Vafa \cite{hori00} gave a recipe to write down a Laurent polynomial $W_q^{\circ}$ from the combinatorial data of $X$.  Independently, $W_q^\circ$ was also constructed by Givental \cite{givental_ICM}. It turns out that $W_q^{\circ}$ gives the `leading order term' of $W_q$, and our results show that the remaining terms are {\em instanton corrections} coming from holomorphic disks.

The PDE approach to writing down these instanton corrections is achieved by solving a Picard-Fuchs system for the {\em mirror map} $\check{q} (q)$. \footnote{In the literatures the mirror map refers to $q(\check{q})$, while $\check{q} (q)$ is its inverse.}  It was studied by Givental \cite{givental98} and Lian-Liu-Yau \cite{LLY3} for a toric manifold $X$ whose anti-canonical line bundle $-K_X$ is numerically effective.  We call such $X$ a {\em semi-Fano} toric manifold.  In this setting the instanton-corrected superpotential is then given by
$$ W^{\text{PF}}_q := W_{\check{q} (q)}^{\circ}.$$
The function $W^{\text{PF}}_q$ fits into the mirror symmetry picture mentioned above, namely we have
$$ QH^* (X,q) \cong \Jac (W^{\text{PF}}_q)$$
as Frobenius algebras.

On the other hand, the instanton corrections are realised by Fukaya-Oh-Ohta-Ono \cite{FOOO1} using open GW invariants as follows.  Let $\torus \subset X$ be a Lagrangian toric fiber and $\pi_2 (X,\torus)$ the set of homotopy classes of maps $(\Delta,\partial \Delta) \to (X,\torus)$, where $\Delta$ denotes the closed unit disk.  For $\beta \in \pi_2 (X,\torus)$, the moduli space $\moduli_1(\beta)$ of stable disks representing $\beta$ and its virtual fundamental class $[\moduli_1(\beta)] \in H_n(\torus)$ are defined.  The one-pointed open GW invariant associated to $\beta$ is defined as
$$ n_\beta := \int_{[\moduli_1(\beta)]} \text{ev}^*[\point] $$
where $[\point] \in H^n(\torus)$ is the point class and $\text{ev}:\moduli_1(\beta)\to\torus$ is the evaluation map. Then the instanton-corrected superpotential is defined as
$$ W^{\text{LF}}_q := \sum_{\beta \in \pi_2 (X,\torus)} n_\beta Z_{\beta} $$
where $Z_{\beta}$ is an explicitly written monomial for each $\beta$ (see Section \ref{LG} for more details).  Notice that the above formal sum involves infinitely many terms in general, and  is well-defined over the Novikov ring (see Section \ref{FOOO}).  Fukaya-Oh-Ohta-Ono \cite{FOOO10b} proved that
$$ QH^* (X,q) \cong \Jac (W^{\text{LF}}_q)$$
as Frobenius algebras.

While $W^{\text{PF}}$ and $W^{\text{LF}}$ originate from totally different approaches, they lead to the same mirror symmetry statements.  In view of this, it is natural to expect that
\begin{equation} \label{PF=LF}
W^{\text{PF}} = W^{\text{LF}}.
\end{equation}

When $X$ is a Fano toric manifold, i.e. $-K_X$ is ample, then the toric mirror map $\check{q} (q)$ for $X$ is trivial, as observed in \cite{givental98}. Hence $W^{\text{PF}}=W^{\circ}$. Also, for a Fano toric manifold $X$, the open GW invariants $n_\beta$ considered above can be completely calculated using the work \cite{cho06}. It follows that $W^{\text{LF}}=W^\circ$. Therefore the formula \eqref{PF=LF} holds true for Fano toric manifolds.

When $X$ is semi-Fano but not Fano, i.e. $-K_X$ is numerically effective but not ample, the toric mirror map $\check{q} (q)$ for $X$ is non-trivial, and the open GW invariants $n_\beta$ cannot be easily calculated due to nontrivial obstructions to the moduli problem. In this situation, the formula \eqref{PF=LF} is a highly non-trivial statement.

In this paper we prove \eqref{PF=LF} under the technical assumption that the coefficients of $W^{\text{LF}}$ converge analytically (instead of just being formal sums):

\begin{theorem}[Restatement of Theorem \ref{main_thm}]\label{main_thm_intr}
Let $X$ be a toric manifold with $-K_X$ numerically effective, and let $W^{\text{PF}}$ and $W^{\text{LF}}$ be the superpotentials in the mirror as explained above.  Then
$$ W^{\text{PF}} = W^{\text{LF}} $$
provided that each coefficient of $W^{\text{LF}}$ converges in an open neighborhood around $q=0$.
\end{theorem}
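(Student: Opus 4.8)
The plan is to prove the identity coefficient by coefficient in the monomials $z^{v_i}$ attached to the primitive generators $v_1,\dots,v_m$ of the rays of the fan of $X$. First I would invoke the classification of Maslov index $2$ holomorphic disks bounded by a toric fiber $\torus$ (Cho--Oh in the Fano case, and Fukaya--Oh--Ohta--Ono in general): under the semi-Fano hypothesis the only classes contributing to $W^{\textrm{LF}}$ are the basic classes $\beta_i$ and their corrections $\beta_i + \alpha$, where $\alpha \in H_2(X)$ is effective with $c_1(\alpha) = 0$. Consequently $W^{\textrm{LF}}_q$ and $W^{\textrm{PF}}_q = W^{\circ}_{\check q(q)}$ involve exactly the same monomials $z^{v_i}$, and since $W^{\circ}$ is the leading-order term of $W^{\textrm{LF}}$ (so that $n_{\beta_i}=1$ and $Z_{\beta_i}=C_i(q)\,z^{v_i}$ with $C_i$ the Hori--Vafa coefficient), the theorem reduces to showing that for each $i$ the generating series $1 + \sum_{\alpha \neq 0} n_{\beta_i+\alpha}\, q^{\alpha}$ equals $C_i(\check q(q))/C_i(q)$. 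The convergence hypothesis in the statement is precisely what allows me to treat these generating series as honest holomorphic functions near $q = 0$, so that the comparison with the analytic mirror map is meaningful.

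The central step is to rewrite the open invariants $n_{\beta_i + \alpha}$ in terms of closed genus-$0$ Gromov--Witten data of $X$. Geometrically, a stable disk in class $\beta_i + \alpha$ should degenerate into the basic disk in class $\beta_i$ with a genus-$0$ stable map in the curve class $\alpha$ bubbled off at an interior node; the condition $c_1(\alpha) = 0$ is what lets the sphere component absorb the extra symplectic area while keeping the total Maslov index equal to $2$. I would make this precise through a gluing and degeneration analysis of the moduli space $\moduli_1(\beta_i + \alpha)$, comparing its virtual fundamental class with that of an appropriate space of genus-$0$ maps, so that the evaluation $\int_{[\moduli_1(\beta_i+\alpha)]} \textrm{ev}^*[\point]$ is expressed as a one-pointed closed invariant of $X$ (the ray index $i$ being recorded by the toric divisor $D_i$). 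Summing over $\alpha$ then packages $1 + \sum_{\alpha} n_{\beta_i+\alpha} q^{\alpha}$ as a single component of a closed-string generating function.

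With that identification in hand, I would invoke the toric mirror theorem of Givental and Lian--Liu--Yau, which holds exactly under the nef hypothesis on $-K_X$. It states that Givental's $J$-function of $X$ equals the hypergeometric $I$-function after the change of variables $q \mapsto \check q(q)$ given by the toric mirror map. The components of $I$ relevant to each ray are explicit hypergeometric series whose $q$-expansions are precisely the factors $C_i(\check q(q))/C_i(q)$ appearing on the $W^{\textrm{PF}}$ side. Matching these with the closed generating functions obtained in the previous step yields the coefficientwise equality, and hence $W^{\textrm{PF}} = W^{\textrm{LF}}$.

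The hard part will be the open-to-closed comparison: rigorously relating the virtual class of the bordered moduli space $\moduli_1(\beta_i + \alpha)$, which carries real-codimension-one boundary strata, to closed genus-$0$ invariants, and checking that orientations and multiplicities match. One must also use the nef condition carefully to rule out any further Maslov index $2$ contributions and to guarantee that only classes with $c_1(\alpha) = 0$ occur. A secondary but essential subtlety is to justify all the generating-function manipulations analytically rather than merely formally over the Novikov ring; this is exactly where the convergence assumption in the statement is needed.
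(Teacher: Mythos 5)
Your proposal does not follow the paper's argument, and as written it has a genuine gap at its central step. You propose to prove the identity coefficient by coefficient by first converting each open invariant $n_{\beta_i+\alpha}$ into a closed genus-zero Gromov--Witten invariant via a gluing/degeneration analysis of $\moduli_1(\beta_i+\alpha)$, and then matching the resulting generating functions against the hypergeometric coefficients of $W^{\textrm{PF}}$ coming from $I=J$. But this open-to-closed comparison is precisely the hard content you would need to supply, and you only describe it as a plan (``I would make this precise through a gluing and degeneration analysis''). Moreover, the target of the comparison is not quite right: for a general semi-Fano toric $X$ there is no sphere class of $X$ that ``closes up'' the basic disk $\beta_i$, so $1+\delta_i(q)$ is not a component of a closed-string generating function of $X$ itself. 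The identification works for $X=\proj(K_S\oplus\mathcal{O}_S)$ (where $\beta_0+\beta_\infty$ is the fiber class, and the paper uses exactly this in Theorem \ref{open_closed} to verify the convergence hypothesis), but in general one must pass to auxiliary spaces (the Seidel spaces $E_i$, cf.\ Section \ref{Seidel_rep} and Conjecture \ref{conj2}); establishing that comparison rigorously, with virtual classes and orientations, is a substantial theorem in its own right and not something the nef hypothesis hands you for free.

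The paper's proof avoids computing any $n_\beta$ at all. It uses the two mirror theorems only as black boxes producing ring isomorphisms $\Jac(W^{\textrm{LF}}_q)\cong QH^*(X,q)\cong \Jac(W^{\textrm{PF}}_q)$ compatible with the maps $\mathbf{p}_k\mapsto q_k\partial_{q_k}W$, combines this with Iritani's semi-simplicity of $QH^*(X,q)$ for generic $q$, and then places both $W^{\textrm{LF}}_q$ and $W^{\textrm{PF}}_q$ inside the universal unfolding $\mathcal{W}(Q,\zeta)=f+\sum_i Q_i f^i$ of a fixed semi-simple fiber $f=W_{\underline{P}}$. Since the critical points of $\mathcal{W}_Q$ do not move (Proposition \ref{same_crit}), the induced isomorphism $\cpx^N\to\cpx^N$ of Frobenius algebras is a permutation which the leading-order terms force to be the identity, whence $\partial_q\mathcal{Q}^{\textrm{LF}}=\partial_q\mathcal{Q}^{\textrm{PF}}$ and finally $\mathcal{Q}^{\textrm{LF}}=\mathcal{Q}^{\textrm{PF}}$. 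The convergence hypothesis enters exactly where you place it (to make $P^{\textrm{LF}}$ an honest analytic map so the unfolding machinery applies), but the enumerative identification of $n_{\beta_i+\alpha}$ with closed invariants is a \emph{consequence} of the theorem (via equation (\ref{opGW=g_0})), not an ingredient of its proof. If you want to pursue your route, you would need to prove the open/closed correspondence in the generality of Theorem \ref{open_closed} for all semi-Fano $X$ first, which is a separate and harder project.
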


Our approach to Theorem \ref{main_thm_intr} is analytic in nature, involving the theory of unfoldings of analytic functions. The proof may be be summarized as follows. By combining the two isomorphisms $QH^* (X,q) \cong \Jac (W^{\text{PF}}_q)$ and $QH^* (X,q) \cong \Jac (W^{\text{LF}}_q)$, we get an isomorphism $\Jac (W^{\text{LF}}_q) \cong \Jac (W^{\text{PF}}_q)$.  This isomorphism together with semi-simplicity of the Jacobian rings imply that $W^{\text{LF}}_q$ and $W^{\text{PF}}_q$ have the same critical values.  Putting these functions into a universal unfolding gives us constant families of critical points linking those of $W^{\text{PF}}_q$ and $W^{\text{LF}}_q$.  Since they have the same critical values, they indeed correspond to the same based point in the universal family. It follows that the two functions coincide. Details are given in Section \ref{FOOO=G}.

In this paper we show that the technical convergence assumption in Theorem \ref{main_thm_intr} holds at least in the following cases: (1) when $\dim X = 2$ (see Section \ref{surface}) and (2) when $X$ is of the form $\proj (K_S \oplus \mathcal{O}_S)$ for some toric Fano manifold $S$ (see Section \ref{can_line}). 

\begin{remark}
The isomorphism $QH^* (X,q) \cong \Jac (W^{\text{PF}}_q)$ in closed string mirror symmetry for toric manifolds is in fact a consequence of a more involved correspondence. Following Givental \cite{givental_ICM}, the quantum cohomology ring $QH^*(X,q)$ can be equipped with a $D$-module structure which is often called the {\em quantum $D$-module}. By K. Saito's theory of primitive forms \cite{K_Saito}, the superpotential $W^{\text{PF}}$ also defines a $D$-module. Mirror symmetry for the toric manifold $X$ may be understood as an isomorphism between the quantum $D$-module and the $D$-module defined by $W^{\text{PF}}$.

Nevertheless, our proof of \eqref{PF=LF} does not require the mirror symmetry results at the level of $D$-module. The isomorphisms between quantum cohomology rings and Jacobian rings as Frobenius algebras are sufficient.
\end{remark}

\subsection{Computation of open GW invariants}
The function $W^{\text{LF}}$ is a generating function of open GW invariants $n_\beta$ (and thus can be regarded as an object in the `A-side'), whereas $W^{\text{PF}}$ arises from solving Picard-Fuchs equations (and so is an object in the `B-side'). Using this equality, the task of computing the open GW invariants is transformed to solving Picard-Fuchs equations which has been known to experts.  Thus our work gives a mirror symmetry method to compute open GW invariants.

More precisely, in Section \ref{sec:computing_openGW} we derive from Theorem \ref{main_thm} an explicit formula for the generating functions of open GW invariants $n_\beta$. Our formula involves explicit hypergeometric series and mirror maps, and can be used to effectively evaluated open GW invariants $n_\beta$ term-by-term. As an application of our formula, we calculate some open GW invariants in a non-trivial example in Section \ref{sec:further_example}.

Our formula for open GW invariants are made even more explicit using a relationship with Seidel representations. In Section \ref{Seidel_rep}, we derive from \eqref{PF=LF} the following formula for the generating function $\delta_i(q)$ of open GW invariants: under the toric mirror map $\check{q}=\check{q}(q)$, we have
\begin{equation}\label{opGW=g_0:intro}
1+\delta_i(q)=\exp\left(g_0^{(i)}(\check{q})\right).
\end{equation}
Here $g_0^{(i)}(\check{q})$ is the following power series\footnote{It is easy to see that this power series is convergent.}
\begin{equation*}
g_0^{(i)}(\check{q}) := \sum_{\substack{\pairing{-K_X}{d} = 0 \\ \pairing{D_i}{d} < 0 \\ \pairing{D_j}{d} \geq 0 \,\, \forall j\neq i }} \frac{(-1)^{\pairing{D_i}{d}}(-\pairing{D_i}{d}-1)!}{\prod_{j\not=i}\pairing{D_j}{d}!} \check{q}^d.
\end{equation*}

Our formula \eqref{opGW=g_0:intro} completely and effectively calculates open GW invariants of {\em all} semi-Fano torc manifolds. This is a significant advance, since prior to our work open GW invariants of toric manifolds are only calculated in a few examples.

Our main results can also be understood as providing a geometric interpretation of the toric mirror maps $\check{q} (q)$, whose definition is combinatorial in nature and somewhat mysterious.

\subsection{Outline}
This paper is organized as follows.  Sections \ref{toric-LG} and \ref{unfold} serve as short reviews on toric geometry and deformation theory of analytic functions respectively.  Section \ref{FOOO=G} contains the proof of the main theorem and its applications to computation of open GW invariants. Inspired by the recent work of Gonz\'alez and Iritani \cite{G-I11} on the relation between mirror maps and Seidel representations \cite{seidel97}, \cite{McDuff_seidel}, we explain in Section \ref{Seidel_rep} how \eqref{PF=LF} implies that open GW invariants can also be expressed by using Seidel representations.

Here are some remarks on notations.  $H_2(X)$ means $H_2(X, \integer)$ unless otherwise specified.  $QH^*(X)$ always denotes the small quantum cohomology of $X$.  The work of Fukaya-Oh-Ohta-Ono will be abbreviated as `FOOO' in this paper.

\subsection{Some history of this paper}
The first draft of this paper was finished in October 2011. The technical assumption in Theorem \ref{main_thm_intr}, namely, the assumption that coefficients of $W^{\text{LF}}$ converge in an open neighborhood around $q=0$, is required in order to apply the theory of unfoldings of analytic functions. It may be possible to avoid this assumption by carrying out the arguments for functions taking values in suitable formal power series rings.
Around a year later (in November 2012), we found a new geometric approach making use of Seidel representations that proves \eqref{PF=LF} {\em unconditionally}; this geometric proof, together with several applications (one of which being, in turn, the convergence of coefficients of $W^{\text{LF}}$!), appeared in \cite{CLLT12}. Nevertheless we believe that it is still valuable to retain the argument in the current paper.

\section*{Acknowledgment}
We are grateful to Hiroshi Iritani for useful discussions. Part of this work was done when K. C. was working as a project researcher at IPMU at the University of Tokyo and visiting IH\'ES. He would like to thank both institutes for hospitality and providing an excellent research environment. S.-C. L. expresses his deep gratitude to Kyoji Saito for interesting discussions at IPMU on Frobenius structures and primitive forms, and also to Kaoru Ono for his hospitality at Hokkaido University and interesting discussions on Lagrangian Floer theory.

The work of K. C. was supported in part by a grant from the Hong Kong Research Grants Council (Project No. CUHK404412).  The work of S.-C. L. was supported by IPMU and Harvard University.  The work of N. C. L. was supported by a grant from the Hong Kong Research Grants Council (Project No. CUHK401809). The work of H.-H. T. was supported in part by NSF grant DMS-1047777.

\section{Toric manifolds and their Landau-Ginzburg mirrors} \label{toric-LG}

In this section we give a quick review on some facts on toric manifolds.  Then we recall the mirror maps for toric manifolds and Lagrangian Floer theory
which we will use in this paper.  The toric mirror map, which arises from attempts to compute genus $0$ GW invariants of toric manifolds, has been studied by Givental \cite{givental98} and Lian-Liu-Yau \cite{LLY3}, while the Lagrangian Floer theory was constructed by Fukaya-Oh-Ohta-Ono \cite{FOOO1}.

\subsection{A quick review on toric manifolds} \label{toric}

Let $N\cong\integer^n$ be a lattice of rank $n$. For simplicity we shall always use the notation $N_R:=N\otimes R$ for a $\integer$-module $R$. Let $X = X_\Sigma$ be a compact complex toric $n$-fold defined by a fan $\Sigma$ supported in $N_\real$. $X_\Sigma$ admits an action by the complex torus $N_\cpx/N\cong(\cpx^\times)^n$, whence its name `toric manifold'. There is an open orbit in $X$ on which $N_\cpx/N$ acts freely, and by abuse of notation we shall also denote this orbit by $N_\cpx/N\subset X$.  Roughly speaking, $X$ is obtained from the open part $N_\cpx/N$ by compactifying along every ray of $\Sigma$.

We denote by $M=\text{Hom}(N,\integer)$ the dual lattice of $N$. Every lattice point $\nu \in M$ gives a nowhere-zero holomorphic function
$\exp\pairing{\nu}{\cdot}:N_\cpx/N\to\cpx$ which extends to a meromorphic function on $X_\Sigma$. Its zero and pole set gives a toric divisor\footnote{A divisor $D$ in $X$ is toric if $D$ is invariant under the action of $N_\cpx/N$ on $X$.} which is linearly equivalent to $0$.

If we further equip $X$ with a toric K\"ahler form $\omega \in \Omega^2 (X, \real)$, then the action of $N_\real/N$ on $X_\Sigma$ induces a moment map
$$\mu_0:X\to M_\real,$$
whose image is a polytope $P\subset M_\real$ defined by a system of inequalities
\begin{equation} \label{poly_ineq}
\pairing{v_i}{\cdot}\geq c_i,\ i=1,\ldots,m
\end{equation}
where $v_i$ are all primitive generators of rays of $\Sigma$, and $c_i\in\real$ are some suitable constants.  Since translation of the polytope $P$ does not affect the K\"ahler class, without loss of generality we may assume $c_1 = \ldots = c_n = 0$.

We always denote a regular moment map fiber of $\mu_0$ over $r\in M_\real$ by $\torus_r$, and sometimes the subscript $r$ is omitted if the base point is not important for the discussion.  The primitive generators $v_i$'s correspond to disk classes $\beta_i(r) \in \pi_2(X,\torus_r)$, which are referred as the basic disk classes.  The symplectic areas of these disk classes are given by (see \cite{cho06})
$$ \int_{\beta_i(r)} \omega = 2\pi\big(\pairing{v_i}{r} - c_i\big). $$

To complexify the K\"ahler moduli so that it is comparable to the mirror complex moduli, one considers complexified K\"ahler forms $\omega_\cpx = \omega + \consti B \in \Omega^2 (X, \cpx)$ where $B$ is any closed real two form.  One obtains the complexified K\"ahler cone $\mathcal{M}_{A}(X) \subset H^2(X,\cpx)$ by collecting the classes of all such complexified K\"ahler forms.  Let $\{\mathbf{p}_1, \ldots, \mathbf{p}_l\}$ be a nef basis of $H^2(X)$, and let $\{\Psi_1, \ldots, \Psi_l\} \subset H_2(X)$ be its dual basis.  Then $\Psi_k$ induce coordinate functions $q_k$ on $H^2(X,\cpx)$ by assigning $\eta \in H^2(X,\cpx)$ with the value $\conste^{-\pairing{q_k}{\eta}}$.  In particular, we may restrict them on $\mathcal{M}_{A}(X)$ to get coordinates for the K\"ahler moduli.  Notice that $(q_1, \ldots, q_l)$ tends to $0$ when one takes the large radius limit.

Here comes a notational convention: For a class $d \in H_2(X)$, define
$$ q^d = \prod_{j=1}^l q_j^{\pairing{\mathbf{p}_j}{d}}.$$
In this expression we may regard $q = (q_1, \ldots, q_l)$ simply as formal parameters, not necessarily as coordinates of the K\"ahler moduli of $X$.  If $d$ is a curve class, since $\mathbf{p}_j$ is nef for all $j=1, \ldots, l$, the exponents of $q_j$ in the above product are all nonnegative.  This fact is important when one considers the $J$-function, which is a formal power series in $q$ and it lives in the Novikov ring due to this fact.

The polytope $P$ admits a natural stratification by its faces.  Each codimension-one face $T_i\subset P$ which is normal to $v_i\in N$ gives an irreducible toric divisor $D_i=\mu_0^{-1}(T_i)\subset X_\Sigma$ for $i=1,\ldots,m$, and all other toric divisors are generated by $\{D_i\}_{i=1}^m$.  For example, the anti-canonical divisor $-K_X$ of $X$ is given by $\sum_{i=1}^m D_i$.

\subsection{The Landau-Ginzburg mirrors of toric manifolds} \label{LG}

The mirror of a toric manifold $X = X_{\Sigma}$ is a Landau-Ginzburg model $(\check{X},W)$, where $\check{X} = M_\cpx / M \cong (\cpx^\times) ^n$ and $W:\check{X}\to\cpx$ is a holomorphic function called the superpotential.
This subsection reviews how to use the combinatorial data of $\Sigma$ to write down the superpotential.\footnote{The superpotential appearing in this section has not received instanton corrections yet.  In the next two subsections we review two approaches to correct the superpotential, which are provided by mirror maps and Lagrangian Floer theory respectively.}  It is commonly called the Hori-Vafa superpotential \cite{hori00} in the literatures, but in fact it has appeared earlier in Givental's paper \cite{givental98} (the notation for the superpotential in Givental's paper being $F_0(u)$).

Recall that for $i = 1, \ldots, m$ and a moment map fiber $\torus_r = \mu_0^{-1}(r)$ at $r \in P^{\circ}$, $\beta_i \in \pi_2(X,\torus_r)$ denotes the basic disk class bounded by $\torus_r$ corresponding to the primitive generator $v_i$ of a ray of $\Sigma$.  We may also write it as $\beta_i(r)$ to make the dependency on $r$ more explicit.  One has the following exact sequence
\begin{equation} \label{pi2pi1}
0 \to \pi_2(X) \to \pi_2(X,\torus_r) \to \pi_1(\torus_r) \to 0
\end{equation}
where the first map $\pi_2(X) \to \pi_2(X,\torus_r)$ is a natural inclusion and the second map $\partial: \pi_2(X,\torus_r) \to \pi_1(\torus_r)$ is given by taking boundary.  Moreover $\pi_1(\torus_r)$ is canonically identified with the lattice $N$, and $\pi_2(X) = H_2(X)$ since $\pi_1(X) = 0$.

\begin{defn}[The Hori-Vafa superpotential] \label{defn_W_HV}
Let $X$ be a toric manifold equipped with a toric K\"ahler form $\omega$, and let $P$ be the corresponding moment map polytope.  Denote the primitive generators of rays in its fan by $v_i$, $i=1,\ldots,m$, and the corresponding basic disk classes bounded by a regular moment map fiber over $r \in P^\circ$ by $\beta_i(r)$ (here $P^\circ$ denotes the interior of $P$).  The Hori-Vafa superpotential mirror to $X$ is defined to be
\begin{align*}
W^{\circ}: P^{\circ} \times M_\real / M \to \cpx,\\
W^{\circ}(r,\theta) = \sum_{i=1}^m Z_i(r,\theta)
\end{align*}
in which the summands are
\begin{equation} \label{Z_i}
Z_i(r,\theta) = \exp\left(-\int_{\beta_i(r)} \omega + 2\pi\consti \pairing{v_i}{\theta} \right).
\end{equation}
\end{defn}

A small circle is placed as the superscript to indicate that this superpotential has not received instanton corrections yet.

To make the above expression more explicit, let us fix a top dimensional cone of $\Sigma$ generated by, say, $v_1, \ldots, v_n \in N$ (we can always assume that the cone is generated by $v_1, \ldots, v_n$ by relabeling the primitive generators if necessary).  Then each $v_i$ defines a coordinate function
$$z_i := \exp \left(2\pi\consti \pairing{v_i}{\cdot}\right): M_\cpx/ M \to \cpx,$$
for $i = 1, \ldots, n$.  One may write down the Hori-Vafa superpotential in terms of these coordinates $z_i$ as follows:

\begin{prop} \label{write W_HV}
Assume the same setting as in Definition \ref{defn_W_HV}.  The Hori-Vafa superpotential can be written as
\begin{equation} \label{W_HV}
W^{\circ}_q = z_1 + \ldots + z_n + \sum_{i=n+1}^{m} q^{\alpha_i} z^{v_i}
\end{equation}
where $z^{v_i} := \prod_{k=1}^n z_k^{v_i^k}$ and
\begin{equation} \label{alpha}
\alpha_i := \beta_i - \sum_{k=1}^{n} v_i^k \beta_k
\end{equation}
are classes in $H_2(X)$ for $i = n+1, \ldots, m$.
\end{prop}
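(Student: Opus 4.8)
The plan is to unwind the definition of $W^{\circ}$ in the coordinates $z_i$ and read off the coefficients. First I would complexify the variables $r$ and $\theta$ by setting $w = \theta + \consti r \in M_\cpx/M$, so that the coordinate functions become $z_i = \exp 2\pi\consti\pairing{v_i}{w}$ and the area formula $\int_{\beta_i(r)}\omega = 2\pi(\pairing{v_i}{r} - c_i)$ lets me rewrite each summand of $W^{\circ}$ as
\begin{equation*}
Z_i = \exp\left(-2\pi(\pairing{v_i}{r} - c_i) + 2\pi\consti\pairing{v_i}{\theta}\right) = \exp(2\pi c_i)\,\exp 2\pi\consti\pairing{v_i}{w}.
\end{equation*}
Since the chosen maximal cone is generated by $v_1, \ldots, v_n$ and $X$ is smooth, these form a $\integer$-basis of $N$, so every $v_i$ with $i > n$ has an integral expansion $v_i = \sum_{k=1}^n v_i^k v_k$. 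Feeding this into the exponential and using additivity of $\pairing{\cdot}{w}$ gives $\exp 2\pi\consti\pairing{v_i}{w} = \prod_{k=1}^n z_k^{v_i^k} = z^{v_i}$, hence $Z_i = \exp(2\pi c_i)\, z^{v_i}$. For $i = 1, \ldots, n$ we have $c_i = 0$ and $v_i^k = \delta_{ik}$, so $Z_i = z_i$, which already produces the linear part $z_1 + \ldots + z_n$.

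The remaining task is to identify the constant $\exp(2\pi c_i)$ with $q^{\alpha_i}$ for $i > n$. First I would check that $\alpha_i = \beta_i - \sum_{k=1}^n v_i^k \beta_k$ is a genuine curve class: applying the boundary map $\partial$ of the exact sequence (\ref{pi2pi1}) and using $\partial\beta_j = v_j$ gives $\partial\alpha_i = v_i - \sum_k v_i^k v_k = 0$, so $\alpha_i \in \pi_2(X) = H_2(X)$. Next I would use the fact, immediate from the definition of the $q_k$ via the dual bases $\{\mathbf{p}_j\}$ and $\{\Psi_j\}$, that $q^d = \exp(-\int_d \omega)$ for every $d \in H_2(X)$. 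Combining this with the area formula and $c_1 = \ldots = c_n = 0$,
\begin{equation*}
\int_{\alpha_i}\omega = 2\pi(\pairing{v_i}{r} - c_i) - \sum_{k=1}^n v_i^k\, 2\pi\pairing{v_k}{r} = -2\pi c_i,
\end{equation*}
where the $r$-dependent terms cancel because $\sum_k v_i^k \pairing{v_k}{r} = \pairing{v_i}{r}$. Therefore $q^{\alpha_i} = \exp(2\pi c_i)$, and assembling the summands yields (\ref{W_HV}).

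The computation is essentially a change of coordinates, so there is no deep obstacle; the points that require care are the two bookkeeping facts just used. The first is that $\alpha_i$ lies in $H_2(X)$ rather than merely in $\pi_2(X,\torus_r)$, which is what makes $q^{\alpha_i}$ meaningful and is guaranteed by the vanishing of $\partial\alpha_i$. The second is the precise dictionary $q^d = \exp(-\int_d\omega)$ between the formal K\"ahler parameters and symplectic areas; once this is in place the cancellation of the $r$-dependence is forced, reflecting the fact that $\alpha_i$ is a closed class whose area is independent of the fiber $\torus_r$. I would also emphasize that smoothness of $X$, i.e.\ unimodularity of the maximal cone, is what keeps the exponents $v_i^k$ integral so that $z^{v_i}$ is an honest Laurent monomial.
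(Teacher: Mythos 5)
Your proof is correct and follows essentially the same route as the paper's: unwind $Z_i$ using the area formula and the normalization $c_1=\cdots=c_n=0$, use the exact sequence \eqref{pi2pi1} to see $\partial\alpha_i=0$ so that $\alpha_i\in H_2(X)$, and match the constant with $q^{\alpha_i}$ via the dictionary between K\"ahler parameters and symplectic areas. The only (cosmetic) difference is that you isolate the constant $\exp(2\pi c_i)$ and compute $\int_{\alpha_i}\omega=-2\pi c_i$ directly, whereas the paper regroups the exponent via $\beta_i=\alpha_i+\sum_k v_i^k\beta_k$; both amount to the same calculation.
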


\begin{proof}
Since $\int_{\beta_i(r)} \omega = 2 \pi (\pairing{v_i}{r} - c_i)$, we have
$$ Z_i(r,\theta) = \conste^{-2\pi\consti c_i} \exp \left(2\pi \consti \pairing{v_i}{\theta + \consti r}\right).$$
In Section \ref{toric} we have made the choice $c_i = 0$ for $i = 1, \ldots n$.  Thus $Z_i = z_i$ for $i = 1, \ldots, n$.

For $i=n+1, \ldots, m$, we may write
$$ v_i = \sum_{k=1}^{n} v_i^k v_k $$
for $v_i^k \in \integer$, because $\{v_1, \ldots, v_n\}$ generates $N$.  Then
$$ \partial \left( \beta_i - \sum_{k=1}^{n} v_i^k \beta_k \right) = 0.$$
From the exact sequence \eqref{pi2pi1}, $\alpha_i := \beta_i - \sum_{k=1}^{n} v_i^k \beta_k$ belong to $H_2(X)$ for all $i = n+1, \ldots, m$.

Recall that we have introduced a basis $\{\mathbf{p}_1, \ldots, \mathbf{p}_l\}$ of $H_2(X)$ in the previous subsection, and $q^{\alpha_i} = q_1^{\pairing{\mathbf{p}_1}{\alpha_i}} \ldots q_l^{\pairing{\mathbf{p}_l}{\alpha_i}}$.  Then for $i = n+1, \ldots, m$,
\begin{align*}
Z_i(r,\theta) &= \exp\left(-\int_{\beta_i(r)} \omega + 2\pi\consti \pairing{v_i}{\theta} \right) \\
&= \exp\left(-\int_{\alpha_i} \omega - \sum_{k=1}^{n} v_i^k \left(\int_{\beta_k}\omega + 2\pi\consti \pairing{v_k}{\theta}\right) \right)\\
&= q^{\alpha_i}(\omega) z^{v_i}
\end{align*}
Thus the Hori-Vafa superpotential can be written as
$$
W^{\circ} = z_1 + \ldots + z_n + \sum_{i=n+1}^{m} q^{\alpha_i} z^{v_i}.
$$
\end{proof}

Note that the expression of $W^{\circ}_q$ appearing in Proposition \ref{write W_HV} only exploits the fan configuration of the toric manifold $X$ and does not involve its K\"ahler structure.  The Hori-Vafa superptential corresponding to $(X,\omega_\cpx)$, where $\omega_\cpx$ is a complexified K\"ahler class of $X$, is $W^{\circ}_{q(\omega_\cpx)}$ where $q(\omega_\cpx) = \left(\conste^{-\int_{\Psi_1} \omega_\cpx}, \ldots , \conste^{-\int_{\Psi_l} \omega_\cpx}\right)$ is the coordinate of $\omega_\cpx$ in the complexified K\"ahler moduli.  We omit the subscript $q$ in the notation $W^{\circ}_q$ whenever the dependency on $q$ is not relevant for the discussion.

From this expression, we see that $W^{\circ}$ (whose domain is originally $P^{\circ} \times M_\real / M$) can be analytically continued to $M_\cpx / M \cong (\cpx^\times)^n$.  We will mainly be interested in the deformation of $W^{\circ}$, which is captured by its Jacobian ring $\Jac (W^{\circ})$ whose definition is as follows:

\begin{defn}[The Jacobian ring] \label{Jac_def}
Let $f:\mathcal{D} \to \cpx$ be a holomorphic function on a domain $\mathcal{D} \subset \cpx^n$.  Then the Jacobian ring of $f$ is defined as
$$\Jac (f) := \frac{\mathcal{O}_D}{\mathcal{O}_D \langle \partial_1 f, \ldots, \partial_n f \rangle}$$
where $\mathcal{O}_D$ denotes the ring of holomorphic functions on $\mathcal{D}$.
\end{defn}

The Jacobian ring of $W^{\circ}$ is deeply related with the quantum cohomology ring $QH^*(X)$ of $X$.  In \cite{batyrev93}, Batyrev defined the following ring $B_X$ for a toric manifold $X$ and it was later shown by Givental \cite{givental98} that the small quantum cohomology ring $QH^*(X)$ is isomorphic to $B_X$ as algebras when $X$ is Fano:

\begin{defn}[The Batyrev ring \cite{batyrev93}]
Let $X$ be a toric manifold whose toric divisors are denoted by $D_1, \ldots, D_m$, and let $\{\mathbf{p}_j\}_{j=1}^l$ be a nef basis of $H^2(X)$, so that $D_i = \sum_{j=1}^l a_{ij} \mathbf{p}_j$ for $a_{ij} \in \integer$.  The Batyrev ring for $q = (q_1,\ldots,q_l) \in \cpx^l$ is defined as
$$B_X(q) := \cpx[u_1, \ldots, u_l] / \mathcal{I}_{B_X} $$
where $\mathcal{I}_{B_X}$ is the ideal generated by the elements
$$
\prod_{j: \pairing{D_j}{d} > 0} w_j^{\pairing{D_j}{d}} - q^{d} \prod_{j: \pairing{D_j}{d} < 0} w_j^{-\pairing{D_j}{d}}
$$
for every $d \in H_2(X)$.  In the above expression
\begin{equation} \label{w_i}
w_i := \sum_{j=1}^l a_{ij} u_j
\end{equation}
for $i=1,\ldots,m$.
\end{defn}

\begin{theorem}[Batyrev's mirror theorem \cite{batyrev93, givental98}] \label{Bat_mir_thm}
Let $X$ be a toric Fano manifold equipped with a complexified K\"ahler class $\omega_\cpx$.  Denote the small quantum cohomology ring of $X$ by $QH^*(X,\omega_\cpx)$.  Then
$$QH^*(X,\omega_\cpx) \cong B_X(q(\omega_\cpx)) $$
by sending the generators $\mathbf{p}_i \in QH^*(X,\omega_\cpx)$ to $u_i \in B_X(q(\omega_\cpx))$.  Here to define the Batyrev ring we choose a nef basis $\{\mathbf{p}_j\}_{j=1}^l$ of $H^2(X)$ whose dual basis is denoted by $\{\Psi_i\}_{i=1}^l \subset H_2(X)$, and $q(\omega_\cpx) = (\conste^{-\int_{\Psi_1}\omega_\cpx}, \ldots, \conste^{-\int_{\Psi_l}\omega_\cpx})$ is the coordinate of $\omega_\cpx$ in the complexified K\"ahler moduli.
\end{theorem}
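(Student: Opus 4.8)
The plan is to realise the assignment $\mathbf{p}_i \mapsto u_i$ as a ring isomorphism by first checking it additively and then verifying that it intertwines the deformed product on the quantum side with the defining relations of $B_X(q)$. First I would recall the Cox presentation of the classical cohomology, $H^*(X,\cpx) \cong \cpx[D_1,\ldots,D_m]/(\mathcal{I}_{SR}+\mathcal{I}_{\mathrm{lin}})$, where $\mathcal{I}_{SR}$ is the Stanley--Reisner ideal generated by the monomials $\prod_{i\in S} D_i$ over subsets $S$ whose rays $\{v_i\}_{i\in S}$ span no cone of $\Sigma$, and $\mathcal{I}_{\mathrm{lin}}$ encodes the linear relations $\sum_i \pairing{\nu}{v_i} D_i = 0$ for $\nu \in M$. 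Since small quantum cohomology agrees with ordinary cohomology as a module (only the product is deformed), the generators $\mathbf{p}_j$ together with the relations $w_i = \sum_j a_{ij} u_j$ already show that $\mathbf{p}_i \mapsto u_i$ is a well-defined linear isomorphism of the underlying vector spaces; the whole content is that it is multiplicative.

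The heart of the matter is to establish the quantum Stanley--Reisner relations
\[
\prod_{j:\pairing{D_j}{d}>0} w_j^{\pairing{D_j}{d}} = q^{d}\prod_{j:\pairing{D_j}{d}<0} w_j^{-\pairing{D_j}{d}}
\]
inside $QH^*(X,\omega_\cpx)$, where the products are taken with respect to the quantum product $\star$ and $w_i$ now denotes the divisor class $D_i = \sum_j a_{ij}\mathbf{p}_j$. By the divisor and fundamental-class axioms, quantum multiplication by a degree-two class $D_i$ is governed by genus-zero two- and three-point Gromov--Witten invariants, so each monomial on the left expands as its classical cup product plus a sum of instanton corrections indexed by effective curve classes. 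Here the Fano hypothesis is decisive: since $\pairing{-K_X}{d}>0$ for every nonzero effective $d$, the virtual dimension formula on $\overline{\mathcal{M}}_{0,k}(X,d)$ forces all but finitely many classes to contribute zero, and a direct evaluation shows that the single surviving correction reproduces exactly the right-hand side with unit coefficient $q^d$. I expect this degree bound and the explicit evaluation of the surviving invariant to be the main obstacle.

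Rather than computing every three-point invariant by hand, I would instead extract these relations from Givental's toric mirror theorem \cite{givental98}, which I would access through the quantum differential equation. For a Fano $X$ the mirror map is trivial, so the small $J$-function coincides with the explicit hypergeometric $I$-function, and the latter is annihilated by a GKZ-type system of operators whose principal symbols are precisely the relations displayed above. The equation $\conn_{\partial_{t_i}} J = \mathbf{p}_i \star J$ then translates the action of $u_i$ on $B_X(q)$ into quantum multiplication by $\mathbf{p}_i$, which is exactly the compatibility I need.

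Finally I would conclude that the resulting algebra homomorphism $B_X(q) \to QH^*(X,\omega_\cpx)$ is an isomorphism. Surjectivity follows because $H^2(X)$ generates $QH^*$ as an algebra (the classical statement lifts along the degree filtration). For injectivity I would compare ranks: both rings are free modules whose rank equals the number of maximal cones of $\Sigma$, namely $\dim_\cpx H^*(X)$, so a surjection between free modules of equal finite rank must be an isomorphism.
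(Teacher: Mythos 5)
The paper does not prove Theorem \ref{Bat_mir_thm}: it is quoted as background and attributed to \cite{batyrev93} and \cite{givental98}, so there is no in-paper argument to compare against. Your sketch follows the standard external route (Givental's, as presented in \cite{cox-katz}): obtain the quantum Stanley--Reisner relations from the GKZ system annihilating the $I$-function, use the triviality of the mirror map in the Fano case to identify $I$ with $J$, pass to the semiclassical limit of the quantum differential equation to land the relations in $QH^*(X,\omega_\cpx)$, and finish by a dimension count. That is the right strategy, and you are right to abandon the ``direct evaluation of the single surviving three-point invariant'' idea: the Fano condition bounds which curve classes can contribute but does not by itself reduce the quantum products $\prod_j w_j^{\pairing{D_j}{d}}$ to one correction term with unit coefficient, and making that enumeration rigorous was exactly the hard part of Batyrev's original announcement.

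Two steps in your outline need more care. First, the additive identification is not free: $B_X(q)$ is defined as the abstract quotient $\cpx[u_1,\ldots,u_l]/\mathcal{I}_{B_X}$, with no a priori vector-space identification with $H^*(X)$. The correct logical order is to first produce a surjective algebra homomorphism $B_X(q)\to QH^*(X,\omega_\cpx)$, $u_i\mapsto \mathbf{p}_i$ (surjective because $H^2$ generates $H^*$ of a smooth projective toric variety and the quantum product is a deformation of the cup product), and only afterwards compare dimensions. Second, your closing assertion that $B_X(q)$ is free of rank equal to the number of maximal cones is itself a theorem requiring proof --- e.g.\ via flatness of the family over the $q$-parameters degenerating to the Stanley--Reisner ring at $q=0$, or by identifying $\mathrm{Spec}\,B_X(q)$ with the critical scheme of $W^\circ_q$ as in Proposition \ref{Bat-Jac} and bounding the number of critical points. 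What is actually needed is only the inequality $\dim_\cpx B_X(q)\le \dim_\cpx H^*(X)$, which together with the surjection forces the isomorphism. With those two adjustments your outline is a correct account of the standard proof.
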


On the other hand, the Batyrev ring $B_X$ is known to be isomorphic to the Jacobian ring.  A good reference is part (i) of Proposition 3.10 of \cite{iritani09} by Iritani.

\begin{prop}[\cite{batyrev93,iritani09}] \label{Bat-Jac}
Let $X$ be a compact toric manifold and $W^{\circ}_q$ its Hori-Vafa superpotential.  Then
$$ B_X(q) \cong \Jac (W_q^{\circ}) $$
as algebras, where the isomorphism is given by taking $u_j$ to $q_j\frac{\partial  W_q^{\circ}}{\partial q_j}$ for $j=1,\ldots,l$.
\end{prop}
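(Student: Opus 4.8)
The plan is to check that the ring map $\phi$ given by $u_j \mapsto q_j\,\partial W^{\circ}_q/\partial q_j$ carries the defining relations of $B_X(q)$ to zero, and then to exhibit an explicit inverse. Everything is controlled by the monomials $Z_i = q^{\alpha_i} z^{v_i}$ (with the convention $\alpha_i = 0$, $Z_i = z_i$ for $i \le n$), so that $W^{\circ}_q = \sum_{i=1}^m Z_i$. Differentiating logarithmically gives the two families of linear expressions that run through the whole argument,
\[
 z_k \frac{\partial W^{\circ}_q}{\partial z_k} = \sum_{i=1}^m v_i^k\, Z_i, \qquad q_j \frac{\partial W^{\circ}_q}{\partial q_j} = \sum_{i=1}^m \pairing{\mathbf{p}_j}{\alpha_i}\, Z_i .
\]
Because each $z_k$ is invertible on $(\cpx^\times)^n$, the Jacobian ideal $\langle \partial_{z_k} W^{\circ}_q\rangle$ coincides with $\langle z_k\,\partial_{z_k} W^{\circ}_q\rangle$, so in $\Jac(W^{\circ}_q)$ we have the $n$ relations $\sum_{i} v_i^k Z_i = 0$ for $k = 1, \dots, n$; these will play the role of the linear relations on the Batyrev side.

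First I would pin down the image of the generators $w_i = \sum_j a_{ij} u_j$. Using the standard disk--divisor intersection numbers $\pairing{D_i}{\beta_{i'}} = \delta_{i i'}$ together with $\alpha_{i'} = \beta_{i'} - \sum_{k\le n} v_{i'}^k \beta_k$, a short computation gives $\phi(w_i) = \sum_{i'} \pairing{D_i}{\alpha_{i'}}\, Z_{i'}$ (one uses $D_i = \sum_j a_{ij}\mathbf{p}_j$), and reducing modulo the relations $\sum_i v_i^k Z_i = 0$ collapses this to $\phi(w_i) = [Z_i]$ for every $i$. Well-definedness of $\bar\phi$ then reduces to the binomial generators of $\mathcal{I}_{B_X}$. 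For the relation attached to $d \in H_2(X)$, set $c_i = \pairing{D_i}{d}$; its image under $w_i \mapsto [Z_i]$ is $\prod_{c_i>0} Z_i^{c_i} - q^d \prod_{c_i<0} Z_i^{-c_i}$, which is identically zero as a Laurent monomial because $\prod_i Z_i^{c_i} = q^{\sum_i c_i \alpha_i}\, z^{\sum_i c_i v_i} = q^d$. Here I invoke the two identities $\sum_i \pairing{D_i}{d}\, v_i = 0$ (exactness of the fan sequence, i.e. $\partial d = 0$) and $\sum_i \pairing{D_i}{d}\, \alpha_i = d$ in $H_2(X)$, the latter obtained by writing $d = \sum_i \pairing{D_i}{d}\, \beta_i$ and again using $\partial d = 0$. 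Thus $\phi$ descends to a homomorphism $\bar\phi : B_X(q) \to \Jac(W^{\circ}_q)$.

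To invert $\bar\phi$ I would send $z_k \mapsto w_k$ for $k = 1, \dots, n$. The Batyrev relation for the class $d = \alpha_i$ reads $w_i = q^{\alpha_i} \prod_{k \le n} w_k^{v_i^k}$, so this assignment sends $Z_i \mapsto w_i$; moreover $\sum_i v_i^k w_i = 0$ holds identically in $B_X(q)$, being the image of the trivial divisor class $\sum_i \pairing{\nu}{v_i} D_i = 0$ (for $\nu \in M$) under $w_i = \sum_j a_{ij} u_j$. Hence the assignment kills the Jacobian ideal and defines a map $\bar\psi$ that is inverse to $\bar\phi$ on generators. The main obstacle is precisely that $\bar\psi$ a priori lands in the localization of $B_X(q)$ in which $w_1, \dots, w_n$ become invertible: to conclude a genuine isomorphism of the algebras as stated one must know that $w_1, \dots, w_n$ (equivalently $z_1, \dots, z_n$) already act invertibly, i.e. that the Jacobian and Batyrev ideals agree with no extra relations on either side. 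I would settle this via the structure theory of lattice ideals --- the kernel of the monomial map $w_i \mapsto Z_i$ is exactly the binomial ideal generated over $d \in H_2(X)$ --- which matches the two finite-dimensional quotients; alternatively this is the content of part (i) of Proposition 3.10 in \cite{iritani09}, which I would cite to finish.
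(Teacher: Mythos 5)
The paper does not actually prove this proposition: it is quoted from Batyrev and from part (i) of Proposition 3.10 of Iritani's paper, so there is no internal argument to compare against. Judged on its own, your sketch is correct in all the algebraic bookkeeping: the identities $z_k\partial_{z_k}W^{\circ}_q=\sum_i v_i^k Z_i$ and $q_j\partial_{q_j}W^{\circ}_q=\sum_i\pairing{\mathbf{p}_j}{\alpha_i}Z_i$ are right, the reduction $\phi(w_i)=[Z_i]$ modulo the relations $\sum_i v_i^k Z_i=0$ is right (using $\pairing{D_i}{\beta_{i'}}=\delta_{ii'}$), and the verification that the binomial generator for $d$ maps to zero via $\sum_i\pairing{D_i}{d}v_i=0$ and $\sum_i\pairing{D_i}{d}\alpha_i=d$ is exactly the standard computation. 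You have also correctly isolated the one genuinely nontrivial point, which is not merely the invertibility of $w_1,\dots,w_n$: since the paper's Definition \ref{Jac_def} takes $\mathcal{O}_D$ to be the ring of \emph{holomorphic} functions, defining $\bar\psi$ by prescribing images of the coordinates $z_k$ requires knowing that the analytic Jacobian ring is finite-dimensional and generated by Laurent monomials modulo the Jacobian ideal --- i.e.\ that the analytic quotient agrees with the algebraic (lattice-ideal) quotient. That comparison is precisely the content of Iritani's Proposition 3.10(i), which you cite at the end, and it is the same citation the paper itself relies on. So your write-up is an honest and accurate reduction of the proposition to the cited result, with the elementary part filled in correctly; just be aware that the final step you delegate is the substantive analytic input, not a routine localization argument.
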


Combining Theorem \ref{Bat_mir_thm} and Proposition \ref{Bat-Jac}, one has an isomorphism of algebras between $QH^*(X,\omega_\cpx)$ and $\Jac (W^{\circ}_{q(\omega_\cpx)})$ when $X$ is a Fano manifold.  However, this statement no longer holds in general when $X$ is non-Fano.  One needs to include `instanton corrections' to make similar statements for non-Fano toric manifolds.  In the next two sections we will review two different approaches in the semi-Fano setting.

\subsection{Toric mirror transform and mirror theorems} \label{Giv}

Mirror symmetry is powerful because it transforms quantum invariants to some classically known quantities in the mirror side.  In this toric setting, it transforms the small quantum cohomology ring $QH^* (X)$ to the Jacobian ring of the superpotential in the mirror side.  However, to make such a statement the above expression \eqref{W_HV} for $W^\circ$ has to be modified by instanton corrections.  In this section, we review the approach by using the toric mirror transforms studied by \cite{givental98} and \cite{LLY3}.  In the next section we will review another approach which uses open GW invariants \cite{FOOO1}.  The ultimate goal of this paper is to prove that these two approaches are equivalent, and this statement will be made clear in Section \ref{FOOO=G}.

From now on we shall always assume that $X$ is a semi-Fano toric manifold, which means the following:

\begin{defn}
A compact complex manifold is said to be semi-Fano if its anti-canonical divisor $-K_X$ is numerically effective, that is, $-K_X \cdot C \geq 0$ for every complex curve $C$ in $X$.
\end{defn}

Under this condition the toric mirror transform can be written down explicitly. In Givental's formulation \cite{givental98}, this is done by matching the $I$-function with the $J$-function, which are $H^*(X, \cpx)$-valued functions.

Recall that the $I$-function is written as
\begin{equation}
I(\hat{q},z)=z \, \conste^{(\mathbf{p}_1 \log\hat{q}_1 + \ldots + \mathbf{p}_l \log\hat{q}_l) /z} \sum_{d\in H_2^\mathrm{eff}(X)} \hat{q}^{d} \prod_i\frac{\prod_{m=-\infty}^0(D_i+mz)}{\prod_{m=-\infty}^{D_i\cdot d}(D_i+mz)}
\end{equation}
where $\{\mathbf{p}_1, \ldots, \mathbf{p}_l\}$ is a nef basis of $H^2(X)$ chosen in Section \ref{toric}, $\hat{q} = (\hat{q}_1, \ldots, \hat{q}_l)$ are formal variables\footnote{Conceptually $\hat{q}_k$ are coordinates of the formal neighborhood around the large complex structure limit of the mirror complex moduli.  That is, the mirror complex moduli is given by $\text{Spec } \cpx[\hat{q}_1^{\pm 1}, \ldots, \hat{q}_l^{\pm 1}]$, the large complex structure limit is at $\hat{q}_1 = \ldots = \hat{q}_l = 0$, and $I$ is a $H^*(X,\cpx)$-valued function defined on $\text{Spec } \cpx[[\hat{q}_1, \ldots, \hat{q}_l]]$, a formal neighborhood of $0$.}, $d \in H_2(X)$ is written as $d = \sum_{k=1}^l d_k q_k$ and $\hat{q}^d := \prod_{k=1}^l \hat{q}_k^{d_k}$.  Moreover $z$ is a formal variable (caution: it has nothing to do with the coordinates $z_i$ on $M_\cpx / M \cong (\cpx^\times)^n$ given in the last section).  $I$ results from oscillatory integrals of $W^\circ$ and thus captures information about the Landau-Ginzburg mirror.

The $J$-function is a generating function recording the descendent invariants of $X$ as follows:
\begin{equation} \label{J-function}
J(q,z)=z \, \conste^{(\mathbf{p}_1 \log q_1 + \ldots + \mathbf{p}_l \log q_l) /z} \Bigg(1+\sum_\alpha\sum_{d \in H_2^\mathrm{eff}(X)\setminus\{0\}} q^d \Big\langle1,\frac{\phi_\alpha}{z-\psi}\Big\rangle_{0,2,d}\phi^\alpha\Bigg),
\end{equation}
where $\{\phi_\alpha\}$ is a homogeneous additive basis of $H^*(X)$ and $\{\phi^\alpha\} \subset H^*(X)$ is its dual basis with respect to the Poincar\'e pairing.  We always use $\langle \ldots \rangle_{g,k,d}$ to denote the genus $g$, degree $d$ GW invariant of $X$ with $k$ insertions. $\Big\langle1,\frac{\phi_\alpha}{z-\psi}\Big\rangle_{0,2,d}$ is expanded into a power series in $z^{-1}$ whose coefficients are descendent invariants of $X$, which involve the $\psi$-classes in GW theory.

While $I$-function is explicitly written down in terms of combinatorial data of the fan $\Sigma$, $J$-function involves descendent invariants of $X$ which are difficult to compute in general.  It was shown\footnote{The mirror theorem works much more generally for semi-Fano complete intersections in toric varieties; here we only need its restriction to semi-Fano toric cases.} by \cite{givental98, LLY3} that via the `mirror transform' $\hat{q} = \hat{q}(q)$, $J$ can be expressed in terms of $I$:

\begin{theorem}[Toric mirror theorem \cite{givental98, LLY3}] \label{mir_thm_Giv}
Let $X$ be a semi-Fano toric manifold.  There exist formal power series $\hat{q}_i(q)$ for $i = 1, \ldots, l$ such that
$$I(\hat{q}(q)) = J(q)$$
where $\hat{q}(q) = \big(\hat{q}_1(q), \ldots, \hat{q}_l(q)\big)$. Moreover the power series $\hat{q}_i(q), i = 1, \ldots, l$ are explicitly determined by the expansion of $I$ into a $z^{-1}$-series.
\end{theorem}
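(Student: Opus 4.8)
The plan is to realize $I$ and $J$ as two parametrizations of a single geometric object, Givental's Lagrangian cone, and to read the mirror map off their asymptotics. Work in the symplectic vector space $\mathcal{H}=H^*(X,\cpx)((z^{-1}))$ with $\Omega(f,g)=\mathrm{Res}_{z=0}(f(-z),g(z))$, where $(\cdot,\cdot)$ is the Poincar\'e pairing, and let $\mathcal{L}_X\subset\mathcal{H}$ be the overruled Lagrangian cone encoding the genus-$0$ descendent Gromov--Witten theory of $X$. The first ingredient I would invoke is the standard characterization of $J$ as the \emph{canonical slice} of this cone: by the ruling property $T_f\mathcal{L}_X\cap\mathcal{L}_X=zT_f\mathcal{L}_X$, any family $f(z)\in\mathcal{L}_X$ of the asymptotic shape $f(z)=z\,\mathbf{1}+\tau+O(z^{-1})$ with $\tau\in H^*(X)$ necessarily equals the value $J(\tau,z)$; in particular the small $J$-function of (\ref{J-function}) is this family restricted to $\tau=\sum_j\mathbf{p}_j\log q_j$, whose $z^0$-coefficient is \emph{exactly} $\sum_j\mathbf{p}_j\log q_j$ (the putative corrections $\langle 1,\phi_\alpha\rangle_{0,2,d}$ vanish by the string equation).

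The geometric heart, and what I expect to be the main obstacle, is to prove that $I(\hat{q},z)$ lies on $\mathcal{L}_X$ for every $\hat{q}$. I would establish this following Givental and Lian--Liu--Yau by torus-equivariant localization: the hypergeometric series $I$ is produced as the sum of contributions of the torus-fixed loci in a graph-space compactification of maps $\proj^1\to X$, and one checks that these contributions satisfy the recursion-plus-polynomiality conditions that characterize points of $\mathcal{L}_X$ (equivalently, the ``linking'' of Euler data in the mirror-principle formalism). Unlike the remaining steps, which are formal or are finite combinatorial checks, this membership is the genuine enumerative input, and is precisely the content of \cite{givental98,LLY3}.

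Next I would use the semi-Fano hypothesis to pin down the asymptotic shape of $I$. Expanding the hypergeometric factor, the summand indexed by an effective class $d$ has $z$-degree $K_X\cdot d=-(-K_X\cdot d)\le 0$, so nef-ness of $-K_X$ forces $I$ to have \emph{no} powers of $z$ above $z^1$; this is the role of the hypothesis. The key sub-lemma is that the $z^1$-coefficient receives no correction beyond $\mathbf{1}$: a contribution could only come from $d\neq 0$ with $-K_X\cdot d=0$, but such a $d$ must have some $D_i\cdot d<0$ (otherwise all $D_i\cdot d=0$, forcing $d=0$ since the $D_i$ span $H^2$), and then the top coefficient of the factor $\prod_{m=D_i\cdot d+1}^{0}(D_i+mz)$ vanishes because of its $m=0$ term. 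A parallel degree count shows the $z^0$-coefficient lies in $H^2(X)$ (the potential $H^0$-part, coming from $-K_X\cdot d=1$, is forced to vanish by $\sum_i(D_i\cdot d)v_i=0$). Hence
\[ I(\hat{q},z)=z\,\mathbf{1}+\sum_{j=1}^{l}\big(\log\hat{q}_j+g_j(\hat{q})\big)\mathbf{p}_j+O(z^{-1}), \]
with each $g_j(\hat{q})=O(\hat{q})$.

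Finally I would combine these. By the localization step $I(\hat{q},z)\in\mathcal{L}_X$, and by the previous step it has exactly the shape required by the slice characterization, with $\tau=\sum_j(\log\hat{q}_j+g_j(\hat{q}))\mathbf{p}_j\in H^2(X)$; hence $I(\hat{q},z)=J(\tau,z)$. Comparing with the small $J$-function, whose parameter is $\sum_j\mathbf{p}_j\log q_j$, defines the mirror map by $\log q_j=\log\hat{q}_j+g_j(\hat{q})$. Since $g_j(\hat{q})=O(\hat{q})$, this transformation is invertible as a formal power series, yielding $\hat{q}_j(q)=q_j\exp(-g_j+\cdots)$ and therefore $I(\hat{q}(q),z)=J(q,z)$. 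The series $g_j$, hence the mirror map $\hat{q}(q)$, are read off term-by-term from the $z^{-1}$-expansion of $I$, exactly as asserted in the theorem.
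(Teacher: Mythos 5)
The paper does not actually prove Theorem \ref{mir_thm_Giv}: it is imported wholesale from Givental and Lian--Liu--Yau, so there is no internal argument to compare yours against. Judged on its own terms, your outline is the standard modern (Lagrangian-cone) formulation of that proof, and the parts you work out in detail are correct: the characterization of $J$ as the unique point of $\mathcal{L}_X$ of the form $z\mathbf{1}+\tau+O(z^{-1})$, the string-equation observation that the $z^0$-coefficient of the small $J$-function is exactly $\sum_j\mathbf{p}_j\log q_j$, and the semi-Fano degree count showing that the summand for $d$ has total degree $1+K_X\cdot d\le 1$, that the $z^1$-coefficient receives no correction (any $d\neq 0$ with $-K_X\cdot d=0$ has some $D_i\cdot d<0$, and the factor $\prod_{m=D_i\cdot d+1}^{0}(D_i+mz)$ is then divisible by $D_i$, killing its $H^0$-part), and that the potential $H^0$-part of the $z^0$-coefficient vanishes because $\sum_i(D_i\cdot d)v_i=0$ forbids exactly one $D_i\cdot d=1$ with the rest zero. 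This asymptotic shape is precisely what the paper uses later when it writes $I=z\,\conste^{\sum\mathbf{p}_j\log\hat{q}_j/z}(1+\frac{1}{z}\sum_j\mathbf{p}_jf_j(\hat{q})+\mathrm{o}(z^{-1}))$ and the inverse mirror map $q_i=\conste^{f_i(\hat{q})}\hat{q}_i$.

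That said, you should be clear that your proposal is a reduction, not a proof: the single substantive input --- that $I(\hat{q},z)$ lies on $\mathcal{L}_X$ for all $\hat{q}$ --- is exactly the content of \cite{givental98,LLY3}, and you (candidly) cite it rather than establish it. The localization/recursion-and-polynomiality argument, or the linking of Euler data in the mirror-principle framework, is where all the geometry lives, and nothing in your write-up supplies it. So your proposal and the paper are in the same position: both defer the theorem to the original references, with your version adding a correct and useful account of how the mirror map is read off from the $z^{-1}$-expansion of $I$ once cone membership is granted.
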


A priori $\hat{q}_i(q)$, $i = 1, \ldots, l$, are formal power series in $q$.  In Proposition 5.13 of \cite{iritani07}, Iritani proved that indeed the mirror transform is convergent:

\begin{theorem}[Convergence of toric mirror map \cite{iritani07}] \label{conv}
Let $X$ be a semi-Fano toric manifold and let $\hat{q}$ be the toric mirror transform given in Theorem \ref{mir_thm_Giv}.  For every $i=1, \ldots, l$, $\hat{q}_i(q)$ is convergent in a neighborhood of $q = 0$.
\end{theorem}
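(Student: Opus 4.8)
The plan is to read the mirror map off the $z^{-1}$-expansion of the $I$-function, reduce its convergence to that of a single hypergeometric series, and then invert. First I would expand $I(\hat q,z)=z\,\mathbf 1+I_1(\hat q)+O(z^{-1})$ as a Laurent series in $z^{-1}$. The semi-Fano hypothesis that $-K_X=\sum_i D_i$ is nef forces, for every nonzero effective $d$, the hypergeometric factor $\prod_i \tfrac{\prod_{m\le 0}(D_i+mz)}{\prod_{m\le D_i\cdot d}(D_i+mz)}$ to have top $z$-degree $-(-K_X\cdot d)-\nu_d\le -1$, where $\nu_d:=\#\{i:D_i\cdot d<0\}\ge 1$; hence the coefficient of $z^1$ is simply $\mathbf 1$, and only classes with $-K_X\cdot d=0$ and $\nu_d=1$ contribute to the $z^0$-coefficient, giving $I_1(\hat q)=\sum_{k=1}^l \mathbf p_k\log\hat q_k+\sum_d \hat q^d c_d$ with $c_d\in H^2(X,\cpx)$. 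On the $J$-side the descendent part is $O(z^{-1})$, so $J_1(q)=\sum_k \mathbf p_k\log q_k$. Matching $I_1(\hat q(q))=J_1(q)$ via Theorem \ref{mir_thm_Giv} and expanding $c_d$ in the nef basis $\{\mathbf p_k\}$ yields, component by component, $\log q_k=\log\hat q_k+g_k(\hat q)$, i.e. $q_k=\hat q_k\exp\big(g_k(\hat q)\big)$, where $g_k(\hat q)=\sum_d c_{d,k}\hat q^d$ has no constant term.

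Next I would reduce the assertion to convergence of the inverse map $q=q(\hat q)$. The map $\Phi(\hat q)=\big(\hat q_k\exp g_k(\hat q)\big)_k$ satisfies $\Phi(0)=0$ and $D\Phi(0)=\mathrm{Id}$, because $g_k(0)=0$. Thus, once each $g_k$ is shown to be convergent on a polydisk around $\hat q=0$, the holomorphic inverse function theorem supplies a holomorphic inverse $\hat q=\hat q(q)$ near $q=0$, which is exactly the claim. Everything therefore comes down to proving that the series $g_k$ — equivalently the hypergeometric series $\sum_d \hat q^d c_d$ defining $I_1$ — has positive radius of convergence.

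The hard part will be precisely this coefficient estimate. Writing out the leading coefficient for a contributing $d$ (with unique negative intersection $D_{i_0}\cdot d=-b<0$) gives $c_d=\pm\big(\prod_{i:D_i\cdot d>0}\tfrac{1}{(D_i\cdot d)!}\big)(b-1)!\,D_{i_0}$, so $|c_d|$ is the multinomial-type ratio $\tfrac{(b-1)!}{\prod_{i:D_i\cdot d>0}(D_i\cdot d)!}$ subject to $\sum_{i:D_i\cdot d>0}(D_i\cdot d)=b$. A priori this looks factorial, and ruling that out is the crux where the semi-Fano hypothesis enters essentially: the balance $\sum_{i:D_i\cdot d>0}(D_i\cdot d)=b$ together with the fixed bound $r:=\#\{i:D_i\cdot d>0\}\le m$ (the number of rays of $\Sigma$ is fixed) yields, via the multinomial estimate $\tfrac{b!}{\prod a_i!}\le r^{\,b}$, the geometric bound $|c_d|\le m^{\,b}\le C\,R^{|d|}$ since $b\le \mathrm{const}\cdot|d|$. (By contrast, genuine factorial growth would require $r\approx b\to\infty$, which is impossible here.) This gives $g_k$ a positive radius of convergence, and the inversion step above then completes the proof. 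I expect this multinomial bound, and checking that no other $d$ sneak into $I_1$, to be the only real obstacle.

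Finally, as an alternative route I would note that convergence also follows structurally: under the nef hypothesis $\hat q=0$ is a regular singular point of the GKZ / Picard--Fuchs holonomic system satisfied by $I$, so its solutions — and hence the logarithmic mirror-map data $\log q_k-\log\hat q_k$ extracted from them — converge in a punctured neighborhood of $\hat q=0$; the inverse function theorem step is unchanged, and one again obtains $\hat q(q)$ convergent near $q=0$.
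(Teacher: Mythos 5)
The paper offers no proof of this statement at all --- it is imported verbatim from Iritani \cite{iritani07} (Proposition 5.13) --- so there is no internal argument to compare against; what you give is a self-contained proof, and it is essentially correct, following the standard route: read the mirror map off the $z^0$-coefficient of $I$, bound the hypergeometric coefficients geometrically, and invert by the holomorphic inverse function theorem. Two points deserve attention. First, your blanket claim that $\nu_d\ge 1$ for every nonzero effective $d$ is false (the line class in $\proj^2$ has $D_i\cdot d>0$ for all $i$); what you actually need, and what is true, is that a nonzero effective $d$ cannot have both $\nu_d=0$ and $-K_X\cdot d=0$: the toric divisors span $H^2(X,\rat)$, so $D_i\cdot d=0$ for all $i$ forces $d=0$. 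With that repair the top $z$-degree of the hypergeometric factor is still $\le -1$ for every $d\ne 0$, only classes with $-K_X\cdot d=0$ and a unique negatively-intersecting divisor contribute to the $z^0$-coefficient, and their contributions lie in $H^2$, so the mirror map has the claimed form $q_k=\hat{q}_k\exp g_k(\hat{q})$, consistently with the functions $g_0^{(i)}$ of (\ref{gi}). Second, the multinomial bound $|c_d|\le m^{\,b}\le R^{|d|}$ is correct (with $b=\sum_{D_i\cdot d>0}D_i\cdot d$ controlled linearly by the exponent vector of $\hat{q}^d$ because the $D_i$ are fixed integer combinations of the $\mathbf{p}_k$), but to convert a coefficientwise geometric bound into a positive radius of convergence you should also record that the number of effective classes with $\sum_k\langle\mathbf{p}_k,d\rangle=N$ grows at most polynomially in $N$, since effective classes inject into $\nat^l$ via their pairings with the nef basis; this is routine but it is the step that actually finishes the convergence of $g_k$. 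The inverse-function-theorem step is then immediate from $g_k(0)=0$, $D\Phi(0)=\mathrm{Id}$. Your sketched alternative via regularity of the GKZ system is plausible but would need the identification of the mirror-map components with actual solutions of that regular holonomic system to be made precise; the elementary estimate is the cleaner argument and is, in substance, what the cited reference does.
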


The $I$-function can be expressed as oscillatory integrals of the Hori-Vafa superpotential (\cite[p.11]{givental98}), and hence is complex analytic.  Combining Theorems \ref{mir_thm_Giv} and \ref{conv}, one deduces that the $J$-function is also complex analytic in a neighborhood of $q=0$.

The instanton-corrected superpotential can be expressed in terms of this mirror transform:

\begin{defn} \label{W^G}
Let $X$ be a semi-Fano toric manifold, and let $\hat{q}(q)$ be the toric mirror transform.  We define
$$ W_q^{\text{PF}} := W_{\hat{q}(q)}^{\circ} = z_1 + \ldots + z_n + \sum_{i=n+1}^{m} \hat{q}^{\alpha_i}(q) z^{v_i}. $$
The superscript `PF' indicates that the superpotential is defined in terms of the mirror map which is calculated by solving Picard-Fuchs equations.
\end{defn}

While Batyrev's original isomorphism (Theorem \ref{Bat_mir_thm}) does not hold true for general semi-Fano toric manifolds, it can be corrected by using the toric mirror transform:
\begin{theorem}[\cite{givental98, cox-katz, G-I11}]
Let $X$ be a semi-Fano toric manifold equipped with a complexified K\"ahler class $\omega_\cpx$.  We take a nef basis $\{\mathbf{p}_i\}_{i=1}^l \subset H^2(X)$ and let $\hat{q}(q)$ be the toric mirror transform given in Theorem \ref{mir_thm_Giv}.  Then
$$QH^* (X,\omega_\cpx) \cong B_X(\hat{q}(q(\omega_\cpx)))$$
where the isomorphism is given by sending the generators
$$\tilde{\mathbf{p}}_i := \sum_{k=1}^l \left.\frac{\partial \log q_k}{\partial \log \hat{q}_i}\right|_{\hat{q}(q(\omega_\cpx))} \mathbf{p}_k \in QH^* (X,\omega_\cpx) $$
to $u_i \in B_X(\hat{q}(q(\omega_\cpx)))$.  In other words $\mathbf{p}_k \in QH^* (X,\omega_\cpx)$ are sent to
$$\sum_{i=1}^l \left.\frac{\partial \log\hat{q}_i}{\partial \log q_k}\right|_{q(\omega_\cpx)} u_i \in B_X(\hat{q}(q(\omega_\cpx))).$$
\end{theorem}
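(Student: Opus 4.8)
\emph{Strategy.} The plan is to realise both $QH^*(X,\omega_\cpx)$ and $B_X(\hat q)$ as the classical ($z\to 0$) limits of the cyclic differential modules generated by the $J$-function and the $I$-function respectively, and then to transport one ring structure onto the other through the mirror identity $I(\hat q(q))=J(q)$ of Theorem \ref{mir_thm_Giv}. Throughout write $\delta_k:=z q_k\frac{\partial}{\partial q_k}$ and $\hat\delta_i:=z\hat q_i\frac{\partial}{\partial \hat q_i}$ for the ($z$-scaled) logarithmic derivatives in the two coordinate systems; the whole point is that under the substitution $\hat q=\hat q(q)$ the chain rule gives $\hat\delta_i=\sum_k\frac{\partial\log q_k}{\partial\log\hat q_i}\,\delta_k$, and it is precisely these Jacobian coefficients that appear in the definition of $\tilde{\mathbf p}_i$.

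\emph{The $A$-side presentation.} First I would recall the Dubrovin (quantum) connection $\nabla_k=\delta_k-\mathbf p_k\star_q$ on the trivial $H^*(X)$-bundle over the K\"ahler moduli, of which the $J$-function is the canonical fundamental solution: differentiating the prefactor $z\,\conste^{(\sum_k\mathbf p_k\log q_k)/z}$ shows that $\delta_k$ acts on $J$ as multiplication by $\mathbf p_k$ to leading order in $z$. Consequently $J$ generates the quantum $D$-module as a cyclic module, and its classical limit recovers small quantum cohomology: if $\mathcal I(J)$ denotes the left ideal of operators (polynomial in the commuting symbols $\delta_\bullet$ and in $q^d$, with $[\delta_k,q^d]=z\pairing{\mathbf p_k}{d}q^d$) annihilating $J$, then passing to $z=0$---where the symbols $\delta_k$ become central and act as the quantum multiplications $\mathbf p_k\star$---gives $QH^*(X,\omega_\cpx)\cong\cpx[\delta_\bullet,q^{\pm}]/\mathcal I(J)\big|_{z=0}$ with $\delta_k\mapsto\mathbf p_k\star$. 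This is the standard reconstruction of quantum cohomology from its $D$-module.

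\emph{The $B$-side presentation.} On the mirror side the $I$-function arises as oscillatory integrals $\int_\Gamma\conste^{W^\circ_{\hat q}/z}\,\Omega$ of the Hori--Vafa superpotential, $\Omega$ the invariant volume form (\cite[p.11]{givental98}). Differentiating under the integral sign, $\hat\delta_i$ brings down the factor $\hat q_i\frac{\partial W^\circ_{\hat q}}{\partial\hat q_i}$, which by Proposition \ref{Bat-Jac} represents the generator $u_i$ of $\Jac(W^\circ_{\hat q})\cong B_X(\hat q)$; integration by parts (vanishing of the integrals of the total derivatives $\frac{\partial}{\partial z_k}(\,\cdot\,)\,\conste^{W^\circ_{\hat q}/z}$) shows that, in the limit $z\to 0$, the annihilator of $I$ reduces to the Jacobian ideal. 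Hence the classical limit of the cyclic $D$-module generated by $I$ is $B_X(\hat q)$, with $\hat\delta_i\mapsto u_i$.

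\emph{Conclusion and the main obstacle.} With both presentations in hand I would substitute $I(\hat q(q))=J(q)$ from Theorem \ref{mir_thm_Giv}. Since the mirror map has the unipotent shape $\hat q_i=q_i\,\conste^{g_i(q)}$ with $g_i(q)=O(q)$, it is tangent to the identity and hence a local biholomorphism near $0$ (its convergence being Theorem \ref{conv}), so the Jacobian matrix $\big(\partial\log q_k/\partial\log\hat q_i\big)$ is invertible and the classes $\tilde{\mathbf p}_i$ form a basis of $H^2(X)$. The chain rule $\hat\delta_i=\sum_k\frac{\partial\log q_k}{\partial\log\hat q_i}\delta_k$ identifies the two $D$-modules; because it is the whole module, and not merely its generators, that is matched by $I(\hat q(q))=J(q)$, the two ideals of relations coincide and the induced linear identification is automatically an algebra isomorphism. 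Passing to $z=0$ then carries the generator $u_i\leftrightarrow\hat\delta_i$ to $\sum_k\frac{\partial\log q_k}{\partial\log\hat q_i}\mathbf p_k=\tilde{\mathbf p}_i$, which is the asserted map; inverting the Jacobian yields the equivalent form $\mathbf p_k\mapsto\sum_i\frac{\partial\log\hat q_i}{\partial\log q_k}u_i$. I expect the main obstacle to be the $A$-side step: proving that the classical limit of the quantum $D$-module recovers $QH^*$ \emph{as an algebra} with $\delta_k\mapsto\mathbf p_k\star$ (equivalently, that the quantum structure constants are encoded in the second $\delta$-derivatives of $J$), and checking that the matching through $I(\hat q(q))=J(q)$ respects the ring and not merely the connection structure. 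Once this is secured, the semi-Fano hypothesis enters only through the inputs already cited---Theorems \ref{mir_thm_Giv} and \ref{conv}---and the remainder is the formal chain-rule bookkeeping above.
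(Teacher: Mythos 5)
The paper offers no proof of this theorem: it is imported verbatim from the literature (Givental, Cox--Katz, Gonz\'alez--Iritani), so there is no in-paper argument to compare yours against. Your sketch is, in substance, the argument those references give: present $QH^*$ as the classical limit of the quantum $D$-module cyclically generated by $J$, present $B_X(\hat q)$ as the classical limit of the GKZ-type module generated by $I$, and transport one presentation to the other via $I(\hat q(q))=J(q)$, the chain rule producing exactly the Jacobian factors $\partial\log q_k/\partial\log\hat q_i$ that define $\tilde{\mathbf p}_i$ (and your directions of the two Jacobian matrices are consistent with the statement). The one genuinely thin point is the one you flag yourself: that the $z\to 0$ limit of the annihilator of $J$ cuts out $QH^*$ \emph{exactly}, not merely a ring surjecting onto it. For a compact toric $X$ this is closed by a dimension count rather than by general $D$-module nonsense: $H^*(X)$ is generated by divisor classes, so the map $\cpx[\delta_\bullet,q^{\pm}]/\mathcal I(J)\big|_{z=0}\to QH^*(X,q)$, $\delta_k\mapsto\mathbf p_k\star$, is surjective, while the left side has dimension at most the rank of the quantum $D$-module, namely $\dim H^*(X)$; hence it is an isomorphism. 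A symmetric count (or Proposition \ref{Bat-Jac} together with the known dimension of $\Jac(W^\circ_{\hat q})$) handles the $B$-side. One further small check worth recording: in the compact semi-Fano toric case the $H^0$-component of the $z^{-1}$-coefficient of $I$ vanishes (every degree-$0$ correction term carries a factor of some $D_j$), so the mirror identity really is $I(\hat q(q))=J(q)$ with no extra scalar exponential factor, and the unipotent form $\hat q_i=q_i\conste^{g_i(q)}$ you invoke for invertibility of the Jacobian is as stated. With those two points supplied, your outline is a correct reconstruction of the cited proof.
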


Now combining the above theorem with Proposition \ref{Bat-Jac}, one has

\begin{theorem}[Second form of toric mirror theorem] \label{Giv_thm2}
Let $X$ be a semi-Fano toric manifold equipped with a complexified K\"ahler class $\omega_\cpx$, and let $W^{\text{PF}}$ be the instanton-corrected superpotential in Definition \ref{W^G}.  Then
$$QH^* (X,\omega_\cpx) \cong \Jac (W^{\text{PF}}_{q(\omega_\cpx)}).$$
Moreover, the isomorphism is given by sending the generators $\mathbf{p}_k \in QH^* (X,\omega_\cpx)$ to $q \dd{q} W^{\text{PF}}_{q}$ evaluated at $q = q(\omega_\cpx)$.
\end{theorem}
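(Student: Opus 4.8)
The plan is to obtain the isomorphism by \emph{composing} the two isomorphisms already available in the excerpt and then checking that the composite carries each generator $\mathbf{p}_k$ to the claimed element. The first ingredient is the (unlabeled) theorem stated just above this one, from \cite{givental98, cox-katz, G-I11},
$$QH^*(X,\omega_\cpx) \cong B_X(\hat{q}(q(\omega_\cpx))), \qquad \mathbf{p}_k \longmapsto \sum_{i=1}^l \left.\frac{\partial \log \hat{q}_i}{\partial \log q_k}\right|_{q(\omega_\cpx)} u_i.$$
The second ingredient is Proposition \ref{Bat-Jac}. Since the variable appearing there is, as emphasized after Proposition \ref{write W_HV}, merely a formal parameter, I would apply it with that parameter set equal to $\hat{q}$ rather than $q$, obtaining
$$B_X(\hat{q}) \cong \Jac(W^\circ_{\hat{q}}), \qquad u_i \longmapsto \hat{q}_i \frac{\partial W^\circ_{\hat{q}}}{\partial \hat{q}_i}.$$

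First I would specialize this second isomorphism at $\hat{q} = \hat{q}(q(\omega_\cpx))$ and invoke Definition \ref{W^G}, which gives $W^{\textrm{PF}}_{q(\omega_\cpx)} = W^\circ_{\hat{q}(q(\omega_\cpx))}$, so that the target ring becomes $\Jac(W^{\textrm{PF}}_{q(\omega_\cpx)})$. Composing the two maps then immediately produces the ring isomorphism $QH^*(X,\omega_\cpx) \cong \Jac(W^{\textrm{PF}}_{q(\omega_\cpx)})$ asserted in the statement; being a composite of isomorphisms, it is automatically an isomorphism of algebras, with no further check on the ideals required. The only substantive remaining point is to identify the image of each generator $\mathbf{p}_k$ under the composite.

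The heart of the argument is a single chain-rule identity. Tracking $\mathbf{p}_k$ through the two maps gives
$$\mathbf{p}_k \longmapsto \sum_{i=1}^l \left.\frac{\partial \log \hat{q}_i}{\partial \log q_k}\right|_{q(\omega_\cpx)} \left.\hat{q}_i \frac{\partial W^\circ_{\hat{q}}}{\partial \hat{q}_i}\right|_{\hat{q}(q(\omega_\cpx))},$$
and I would show that this equals $\left.q_k \frac{\partial W^{\textrm{PF}}_q}{\partial q_k}\right|_{q(\omega_\cpx)}$. Writing $W^{\textrm{PF}}_q = W^\circ_{\hat{q}(q)}$ and differentiating via the chain rule, one has
$$q_k \frac{\partial W^{\textrm{PF}}_q}{\partial q_k} = \sum_{i=1}^l \left(q_k \frac{\partial \hat{q}_i}{\partial q_k}\right) \frac{\partial W^\circ}{\partial \hat{q}_i}\Big|_{\hat{q}(q)} = \sum_{i=1}^l \frac{\partial \log \hat{q}_i}{\partial \log q_k}\, \hat{q}_i \frac{\partial W^\circ}{\partial \hat{q}_i}\Big|_{\hat{q}(q)},$$
where the second equality uses $q_k \frac{\partial \hat{q}_i}{\partial q_k} = \hat{q}_i \frac{\partial \log \hat{q}_i}{\partial \log q_k}$. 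Evaluating at $q = q(\omega_\cpx)$ matches the displayed image exactly, completing the identification of generators.

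I expect the main difficulty to be bookkeeping rather than conceptual: one must keep careful track of the fact that Proposition \ref{Bat-Jac} is being applied in the coordinate $\hat{q}$ (in which the Hori-Vafa potential is \emph{uncorrected}) and not in $q$, and handle the logarithmic change of variables consistently. Implicitly one also uses that the mirror transform $q \mapsto \hat{q}(q)$ is a local biholomorphism near the large-radius limit, so that the Jacobian matrix $\left(\frac{\partial \log \hat{q}_i}{\partial \log q_k}\right)$ is invertible and the elements $\tilde{\mathbf{p}}_i$ genuinely form a change of basis; this is guaranteed by Theorems \ref{mir_thm_Giv} and \ref{conv}. Once these points are in place, the identification of the generators is immediate from the chain-rule computation above.
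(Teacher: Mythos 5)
Your proposal is correct and follows essentially the same route as the paper, which obtains Theorem \ref{Giv_thm2} precisely by combining the preceding theorem ($QH^*(X,\omega_\cpx) \cong B_X(\hat{q}(q(\omega_\cpx)))$) with Proposition \ref{Bat-Jac} applied in the $\hat{q}$ variable; the paper leaves the chain-rule identification of the generators implicit, and your computation supplies exactly the verification that is being suppressed.
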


\subsection{Lagrangian Floer theory of Fukaya-Oh-Ohta-Ono} \label{FOOO}

Another way to write down the instanton-corrected mirror superpotential is by counting stable holomorphic disks, which is a part of the Lagrangian Floer theory for toric manifolds developed by Fukaya-Oh-Ohta-Ono \cite{FOOO1}.  While FOOO's theory works for general compact toric manifolds, we will restrict to the case when $X$ is a semi-Fano toric manifold for simplicity.

Let $\mathbf{T}$ be a regular toric moment fiber of the semi-Fano toric manifold $X$.  For a disk class $\beta \in \pi_2(X,\mathbf{T})$, we have the moduli space $\mathcal{M}_1 (\beta)$ of stable disks with one boundary marked point representing $\beta$.  $\mathcal{M}_1 (\beta)$ is oriented and compact.  Moreover, since non-constant stable disks bounded by $\mathbf{T}$ have Maslov indices at least two, $\mathcal{M}_1 (\beta)$ has no codimension-one boundary (for a nice and detailed discussion of these, the reader is referred to \cite[Section 3]{auroux07}).  The main problem is transversality: the dimension of $\mathcal{M}_1 (\beta)$ can be higher than its expected (real) dimension, which is $n+\mu(\beta)-2$, where $\mu(\beta)$ denotes the Maslov index of $\beta$.  To tackle with this, FOOO considered the obstruction theory and constructed a virtual fundamental class $[\mathcal{M}_1 (\beta)] \in H_n(\mathbf{T})$, so that the integration
$\int_{[\mathcal{M}_1 (\beta)]} \text{ev}^*[\mathrm{pt}]$
makes sense.

\begin{defn} [One-pointed open GW invariant \cite{FOOO1}] \label{open GW}
Let $X$ be a compact semi-Fano toric manifold, and $\torus$ a regular toric moment fiber of $X$.  The one-pointed open GW invariant associated to a disk class $\beta \in \pi_2(X,\torus)$ is defined as
$$n_\beta := \int_{[\mathcal{M}_1 (\beta)]} \text{ev}^*[\mathrm{pt}],$$
where $[\point] \in H^n(\torus)$ is the point class and $\text{ev}:\moduli_1(\beta)\to\torus$ is the evaluation map.
\end{defn}

Note that $\dim[\mathcal{M}_1 (\beta)]=n$ only when $\mu(\beta) = 2$, hence $n_\beta=0$ whenever $\mu(\beta) \not= 2$.  Analogous to Definition \ref{defn_W_HV}, we have the following definition of instanton-corrected superpotential from Lagrangian Floer theory:

\begin{defn} \label{defn_W_FOOO}
Let $X$ be a semi-Fano toric manifold with a toric K\"ahler form $\omega$, and $P$  the corresponding moment map polytope.  The instanton-corrected superpotential mirror to $X$ from the approach of FOOO's Lagrangian Floer theory is defined to be
\begin{align*}
W^{\text{LF}}: P^{\circ} \times M_\real / M &\to \cpx,\\
W^{\text{LF}}(r,\theta) &= \sum_{\beta \in \pi_2(X,\torus)} n_\beta Z_{\beta}(r,\theta)
\end{align*}
in which the summands are
$$ Z_{\beta}(r,\theta) = \exp\left(-\int_{\beta(r)} \omega + 2\pi\consti \pairing{\partial \beta}{\theta} \right). $$
In the above equation, $\partial \beta \in \pi_1(\torus) \cong N$ so that it has a natural pairing with $\theta$. The superscript `LF' refers to Lagrangian Floer theory.
\end{defn}

Analogous to Proposition \ref{write W_HV}, one may simplify the above expression of $W^{\text{LF}}$ as follows:

\begin{prop} \label{rewrite W^FOOO}
Let $X$ be a semi-Fano toric manifold whose generators of rays in its fan are $\{v_i\}_{i=1}^{m}$.  Without loss of generality suppose $\{v_1, \ldots, v_n\}$ generate a cone in its fan, and it gives the complex coordinates $z_i := \exp \left(2\pi\consti \pairing{v_i}{\cdot}\right): M_\cpx/ M \to \cpx$.  Then $W^{\text{LF}}$ can be written as
$$
W^{\text{LF}} = (1+\delta_1)z_1 + \ldots + (1+\delta_n)z_n + \sum_{k=n+1}^m (1+\delta_k) q^{\alpha_k} z^{v_k}
$$
where
\begin{equation}
\delta_k := \sum_{\alpha\not= 0} n_{\beta_k + \alpha} q^\alpha
\end{equation}
in which the summation is over all non-zero $\alpha \in H_2(X)$ represented by rational curves with Chern number $-K_X \cdot \alpha = 0$.  As before, $q^\alpha = \prod_{i=1}^l q_i^{\pairing{\mathbf{p}_i}{\alpha}}$ for a chosen basis $\{\mathbf{p}_i\}_{i=1}^l$ of $H^2(X)$.  Later we may denote $W^{\text{LF}}$ as $W_q^{\text{LF}}$ to emphasize its dependency on the coordinate $q$ of the K\"ahler moduli.
\end{prop}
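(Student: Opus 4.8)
The plan is to reorganize the sum defining $W^{\textrm{LF}}$ according to the boundary class $\partial\beta \in \pi_1(\torus) \cong N$, recognizing each group of terms as a single Hori-Vafa monomial $Z_{\beta_k}$ multiplied by an instanton-correction factor $1+\delta_k$, and then to invoke the computation already carried out in Proposition \ref{write W_HV} to pass to the $z_i$ coordinates.

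First I would reduce to Maslov index two: as remarked after Definition \ref{open GW}, the virtual dimension of $\moduli_1(\beta)$ equals $n$ precisely when $\mu(\beta)=2$, so $n_\beta = 0$ unless $\mu(\beta)=2$, and only such classes contribute. The essential step is then the classification of Maslov index two stable disks in a semi-Fano toric manifold (Cho-Oh \cite{cho06} in the Fano case, FOOO \cite{FOOO1} in general). A stable disk representing such a $\beta$ consists of one disk component together with a tree of sphere bubbles; the disk component has Maslov index at least two, while each sphere bubble contributes $2c_1 \geq 0$ since $-K_X$ is nef. Because the total Maslov index is exactly two, the disk component must itself have index two—hence is one of the basic disks $\beta_k$—and the total sphere class $\alpha := \beta - \beta_k$ is an effective class in $H_2(X)$ with $-K_X\cdot\alpha = 0$. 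In particular $\partial\beta = \partial\beta_k = v_k$ for a unique $k$, so every contributing class has the form $\beta_k + \alpha$ with $\alpha$ as in the statement, the case $\alpha = 0$ recovering the basic class itself, for which $n_{\beta_k}=1$.

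With this classification in hand I would rewrite
$$W^{\textrm{LF}} = \sum_{k=1}^m \sum_\alpha n_{\beta_k+\alpha}\, Z_{\beta_k+\alpha},$$
the inner sum over effective $\alpha \in H_2(X)$ with $-K_X\cdot\alpha = 0$ (including $\alpha=0$). Since $\alpha$ is a sphere class we have $\partial(\beta_k+\alpha) = \partial\beta_k$ and $\int_{\beta_k+\alpha}\omega = \int_{\beta_k}\omega + \int_\alpha\omega$, so directly from the definition of $Z_\beta$,
$$Z_{\beta_k+\alpha} = \conste^{-\int_\alpha\omega}\, Z_{\beta_k} = q^\alpha Z_{\beta_k}.$$
Factoring out $Z_{\beta_k}$ and using $n_{\beta_k}=1$ yields
$$W^{\textrm{LF}} = \sum_{k=1}^m \Big(1 + \sum_{\alpha\neq 0} n_{\beta_k+\alpha}q^\alpha\Big) Z_{\beta_k} = \sum_{k=1}^m (1+\delta_k)\, Z_{\beta_k}.$$
Finally the computation inside the proof of Proposition \ref{write W_HV} gives $Z_{\beta_k} = z_k$ for $k=1,\ldots,n$ and $Z_{\beta_k} = q^{\alpha_k} z^{v_k}$ for $k=n+1,\ldots,m$, which is exactly the claimed expression.

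The main obstacle is the classification step: the decomposition of a Maslov index two stable disk into a basic disk plus spheres of vanishing Chern number, together with the normalization $n_{\beta_k}=1$. This is precisely where the semi-Fano hypothesis is used—nefness of $-K_X$ forces every sphere bubble to have nonnegative Chern number, so that summing to Maslov index two pins the disk component down to a single basic class and confines all corrections to classes $\alpha$ with $-K_X\cdot\alpha = 0$. Once this structural input is granted, the remaining steps are routine bookkeeping with the symplectic areas and boundaries of the disk classes.
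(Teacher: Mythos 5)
Your proposal is correct and follows essentially the same route as the paper's proof: reduce to classes of the form $\beta_k+\alpha$ with $\alpha$ a Chern-number-zero sphere class (the paper simply cites \cite{FOOO1} for this classification, whereas you sketch the underlying Maslov-index argument), compute $Z_{\beta_k+\alpha}=q^\alpha Z_{\beta_k}$, use $n_{\beta_k}=1$ from Cho--Oh, and conclude via the coordinate expressions from Proposition \ref{write W_HV}. The only difference is that you supply the structural justification that the paper delegates to a citation.
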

\begin{proof}
Equip $X$ with a toric K\"ahler form $\omega$.  According to \cite{FOOO1}, $n_{\beta} \not= 0$ only when $\beta = \beta_i + \alpha$ for $i=1,\ldots,m$ and $\alpha \in H_2(X)$ is represented by a rational curve.  Moreover,
\begin{align*}
Z_{\beta_i + \alpha}(r,\theta) &= \exp\left(-\int_{\beta} \omega + 2\pi\consti \pairing{\partial \beta}{\theta} \right) \\
&= \conste^{-\int_{\alpha} \omega} \exp\left(-\int_{\beta_i(r)} \omega + 2\pi\consti \pairing{v_i}{\theta} \right)\\
&= q^{\alpha} Z_i(r,\theta)
\end{align*}
where $Z_i$ is defined by Equation \eqref{Z_i}.  Also, by the results of Cho-Oh \cite{cho06}, we have $n_{\beta_i} = 1$.  Thus
\begin{align*}
W^{\text{LF}} &= \sum_{i=1}^m \left(\sum_{\alpha} n_{\beta_i + \alpha} q^{\alpha}\right) Z_i(r,\theta)\\
&= \sum_{i=1}^m \left(1 + \sum_{\alpha\not=0} n_{\beta_i + \alpha} q^{\alpha}\right) Z_i(r,\theta).
\end{align*}
From the proof of Proposition \ref{write W_HV}, $Z_i = z_i$ for $i=1,\ldots,n$ and $Z_j = q^{\alpha_j} z^{v_j}$ for $j=n+1,\ldots,m$.  The stated expression for $W^{\text{LF}}$ follows.
\end{proof}

We may take a change of coordinates on $z_i$ and rewrite the above expression of $W^{\text{LF}}$ in the same form as $W^{\text{PF}}$ appeared in the last section: By the change of coordinates $z_i \mapsto z_i/(1+\delta_i)$, the superpotential has the expression
\begin{equation} \label{W^FOOO}
\begin{split}
W_q^{\text{LF}} &= z_1 + \ldots + z_n + \sum_{k=n+1}^m \frac{(1+\delta_k) }{\prod_{j=1}^n (1+\delta_j)^{v_k^j}} q^{\alpha_k} z^{v_k} \\
&= z_1 + \ldots + z_n + \sum_{k=n+1}^m (1 + \tilde{\delta}_k) q^{\alpha_k} z^{v_k}
\end{split}
\end{equation}
for some formal series $\tilde{\delta}_k$ in $q_1, \ldots, q_l$ with constant terms equal to zero.  Notice that the coefficients of $W^{\text{LF}}$ are formal power series in $q_1, \ldots, q_l$.  This means $W_q^{\text{LF}}$ is defined only in a formal neighborhood $\text{Spec } \cpx[[q_1,\ldots,q_l]]$ of $q=0$.  Substituting
$$ q_k = \mathrm{T}^{\int_{\Psi_k} \omega} $$
for each $k = 1, \ldots, l$ where $\mathrm{T}$ is a formal variable, the coefficients live in the universal Novikov ring
$$ \Lambda_0 = \left\{ \sum_{i=0}^\infty a_i \mathrm{T}^{\lambda_i}: a_i \in \cpx, \lambda_i \in \real_{\geq 0}, \lim_{i\to\infty} \lambda_i = \infty \right\}.$$

Using this form of the superpotential, FOOO proved that

\begin{theorem}[Mirror theorem by Fukaya-Oh-Ono-Ohta \cite{FOOO1}, \cite{FOOO10b}] \label{mir_thm_FOOO}
Let $X$ be a semi-Fano toric manifold \footnote{FOOO's theory works for big quantum cohomology of general compact toric manifolds as well.  But since we have only written down the superpotential $W^{\text{LF}}$ without bulk deformation in the semi-Fano case, we shall confine ourselves to this special case of their theorem.} and let $q$ be the coordinate on its complexified K\"ahler moduli corresponding to the choice of basis $\{\mathbf{p}_i\}_{i=1}^l \subset H^2(X)$ (see Section \ref{toric}).  Let $W^{\text{LF}}_q$ be the superpotential from Lagrangian Floer theory given by Equation \eqref{W^FOOO}. Then
$$QH^* (X,q) \cong \Jac (W^{\text{LF}}_{q})$$
as algebras \footnote{Indeed they proved more, namely, they are isomorphic as Frobenius algebras.}.
Moreover, the isomorphism is given by sending the generators $\mathbf{p}_k \in QH^* (X,q)$ to $q_k \dd{q_k} W^{\text{LF}}_{q}$.
\end{theorem}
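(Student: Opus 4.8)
The plan is to realize the isomorphism as a \emph{Kodaira--Spencer map} built directly from the Lagrangian Floer theory of $\torus$, rather than from the mirror map; this keeps the argument on the ``A-side'' and independent of the Picard--Fuchs description underlying $W^{\textrm{PF}}$. Concretely, I would deform the potential by bulk classes: for $\mathfrak{b}\in H^*(X)$ one has the bulk-deformed potential $W^{\mathfrak{b}}$ of FOOO, whose $\mathfrak{b}=0$ specialization is $W^{\textrm{LF}}_q$, and I would set
\begin{equation*}
\mathfrak{ks}(\mathfrak{b}) \;=\; \left.\frac{\partial}{\partial \mathfrak{b}}\, W^{\mathfrak{b}}\right|_{\mathfrak{b}=0} \in \Jac(W^{\textrm{LF}}_q).
\end{equation*}
For a divisor class $\mathbf{p}_k$ the divisor equation for disks carrying one interior insertion converts this bulk derivative into a logarithmic derivative in the K\"ahler parameter, giving $\mathfrak{ks}(\mathbf{p}_k)=q_k\dd{q_k}W^{\textrm{LF}}_q$; this is exactly the map in the statement. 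Since the cohomology of a toric manifold is generated as a ring by $H^2(X)$, it suffices to control $\mathfrak{ks}$ on the $\mathbf{p}_k$ and to know that $\mathfrak{ks}$ respects products.

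The core of the argument is to show that $\mathfrak{ks}$ is a ring homomorphism from the \emph{quantum} cohomology $QH^*(X,q)$ to $\Jac(W^{\textrm{LF}}_q)$. I would extract this from the filtered $A_\infty$ structure on $\torus$ and the associated operators $\mathfrak{q}_{\ell,k}$ counting stable disks with $\ell$ interior and $k$ boundary marked points. The point is that interior insertions glue according to the quantum cup product, so that the second-order bulk expansion of $W^{\mathfrak{b}}$ reproduces the structure constants of $\cup$ on $QH^*(X,q)$, while the $A_\infty$ relations force
\begin{equation*}
\mathfrak{ks}(\mathfrak{a})\cdot \mathfrak{ks}(\mathfrak{b}) \;\equiv\; \mathfrak{ks}(\mathfrak{a}\cup \mathfrak{b}) \pmod{\langle z_1\partial_{z_1}W^{\textrm{LF}},\ldots,z_n\partial_{z_n}W^{\textrm{LF}}\rangle},
\end{equation*}
i.e.\ equality in the Jacobian ring. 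Establishing this compatibility --- that products of first-order bulk derivatives agree with the bulk derivative of the quantum product modulo the Jacobian ideal --- is the step I expect to be the main obstacle, since it requires the full Kuranishi/obstruction package of FOOO to make the disk counts and their gluing rigorous and to control the contributions of the higher operators $\mathfrak{q}_{\ell,k}$.

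Granting the homomorphism property, I would finish by a surjectivity plus rank argument over the Novikov ring $\Lambda_0$. For surjectivity, working modulo the maximal ideal of $\Lambda_0$ (the large-volume/classical limit) $\mathfrak{ks}$ degenerates to the Batyrev-to-Jacobian isomorphism of Proposition \ref{Bat-Jac} for $W^{\circ}$, so the $q_k\dd{q_k}W^{\textrm{LF}}$ generate $\Jac(W^{\textrm{LF}}_q)$ modulo the maximal ideal; a Nakayama-type lift then promotes this to genuine generation, and since $\mathfrak{ks}$ is a ring map hitting all generators it is onto. For the rank, both $QH^*(X,q)$ and $\Jac(W^{\textrm{LF}}_q)$ are free $\Lambda_0$-modules of rank $\dim_\cpx H^*(X)$ --- on the Jacobian side this is the count of critical points of $W^{\textrm{LF}}$, equal to the number of $\torus$-fixed points of $X$. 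A surjection of free modules of equal finite rank is an isomorphism, completing the proof.

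A secondary technical point to address is the possible non-Morseness of $W^{\textrm{LF}}$: where the critical points are degenerate one must read the rank off the local Artinian structure of $\Jac(W^{\textrm{LF}}_q)$, or first pass to a bulk deformation making $W^{\mathfrak{b}}$ have semisimple Jacobian ring and then specialize. I would handle this exactly as FOOO do, and I emphasize that throughout the argument stays within Floer theory, so it is logically prior to --- and independent of --- the comparison with $W^{\textrm{PF}}$.
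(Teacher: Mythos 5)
The paper does not prove this statement at all: it is imported as a black box from Fukaya--Oh--Ohta--Ono \cite{FOOO1}, \cite{FOOO10b}, so the only meaningful comparison is with the strategy of those cited works. Your outline does track that strategy faithfully --- FOOO's isomorphism is indeed a Kodaira--Spencer-type map defined through the bulk-deformed potential and the operators $\mathfrak{q}_{\ell,k}$, the identification $\mathfrak{ks}(\mathbf{p}_k)=q_k\dd{q_k}W^{\textrm{LF}}_q$ does come from a divisor-type axiom for interior insertions, and the endgame is surjectivity plus a module-rank comparison over $\Lambda_0$. So as a roadmap the proposal is sound and is not a ``different route''; it is a compressed description of the known proof.

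As a proof, however, it has two genuine gaps, both at the load-bearing points. First, the ring homomorphism property \emph{is} the theorem: everything else is bookkeeping. Deferring it to ``the full Kuranishi/obstruction package of FOOO'' is not a step one can grant --- one must actually verify that the degeneration of the disk with two interior marked points converts the composition of $\mathfrak{q}$-operators into the quantum product of the bulk insertions modulo the ideal $\langle z_1\partial_{z_1}W^{\textrm{LF}},\ldots,z_n\partial_{z_n}W^{\textrm{LF}}\rangle$, and this requires the compatible system of Kuranishi structures and continuous-family multisections; moreover the divisor axiom for open invariants is not automatic, since the relevant moduli spaces have boundary and the usual forgetful-map argument needs to be redone. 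Second, the rank comparison is asserted rather than established: over the Novikov ring the Jacobian ring must be defined with a suitable completion, and the claim that it is a free $\Lambda_0$-module of rank $\dim_\cpx H^*(X)$ is itself one of the main theorems of \cite{FOOO10b}; it cannot be read off as ``the count of critical points of $W^{\textrm{LF}}$'', which a priori need not be finite, reduced, or even well-posed before the freeness is known. (The injectivity/rank step in FOOO is obtained from the surjectivity together with an upper bound coming from the leading-order term, or via the residue pairing, not independently of it.) In short: right architecture, but the two steps you flag as hard are precisely the ones left unproved, so this remains a citation of FOOO's theorem rather than a proof of it.
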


\section{Universal unfolding of superpotentials} \label{unfold}

In the last section we have discussed the Landau-Ginzburg mirror of a toric manifold $X$, which is a holomorphic function $W: (\cpx^\times)^n \to \cpx$.  We have seen that the deformation theory of $W$, which is recorded by its Jacobian ring $\Jac (W)$, captures the enumerative geometry of $X$, namely its quantum cohomology $QH^*(X)$.  In this section we recall some deformation theory in the Landau-Ginzburg side which we will use to prove our main theorem.  We recommend Section 2.2 of Gross' book \cite{gross_book} which is an excellent review on this old subject.  One of the classical literatures is the text \cite{teissier} written by Teissier, and the papers \cite{douai-sabbah1,douai-sabbah2} by Douai and Sabbah give modern treatments and applications of this subject.

Let $f$ be a holomorphic function on a domain $\mathcal{D} \subset (\cpx^\times)^n$.  We are interested in deformations of $f$, which are defined as follows:

\begin{defn}[Unfolding]
Let $f$ be a holomorphic function on a domain $\mathcal{D} \subset (\cpx^\times)^n$.  An unfolding of $f$ is a pair $(U,W)$, where $U$ is an open neighborhood of $0 \in \cpx^l$ for some $l\in\nat$ and $W$ is a holomorphic function defined on $U \times \mathcal{D}$, such that $W|_{\{0\} \times U} = f$.
\end{defn}

Naturally one seeks for a universal object in the categories of unfoldings, in the sense that every unfolding comes from pull-back of this universal object.  The precise definition is as follows:

\begin{defn}[Universal unfolding]
Let $\mathcal{D} \subset \cpx^n$ be a bounded domain and $f: \mathcal{D} \to \cpx$  a holomorphic function.  An unfolding $(U,\mathcal{W})$ of $f$ is universal if it satisfies the following condition: for every unfolding $(V,W)$ of $f$, there exists an open subset $V' \subset V$ containing $0 \in V$ and the following commutative diagram of holomorphic maps:

$$
\begin{diagram} \label{univ_unfold_diag}
\node{V' \times \mathcal{D}} \arrow{e,t}{\Phi} \arrow{s,l}{\mathrm{pr}} \node{U \times \mathcal{D}} \arrow{s,r}{\mathrm{pr}} \\
\node{V'} \arrow{e,b}{\mathcal{Q}} \node{U}
\end{diagram}
$$
satisfying the conditions that
\begin{enumerate}
\item $W|_{V' \times \mathcal{D}} = \mathcal{W} \circ \Phi$.
\item $\mathcal{Q}: V' \to U$ is a holomorphic map with $\mathcal{Q}(0) = 0$ whose induced map on tangent spaces is unique.
\item $\Phi|_{\{0\} \times V'}:\{0\} \times \mathcal{D} \to \{0\} \times \mathcal{D}$ is the identity map.
\end{enumerate}
Here $\mathrm{pr}: V' \times \mathcal{D} \to V'$ and $\mathrm{pr}: U \times \mathcal{D} \to U$ are natural projections to the first factor.
\end{defn}

Now comes the essential point which we will use in the next section: When $f$ is semi-simple, there is an easy way to write down an universal unfolding of $f$.  First of all, let us recall the definition of semi-simplicity (the Jacobian ring is defined as in Definition \ref{Jac_def}):

\begin{defn}[Semi-simplicity]
Let $\mathcal{D} \subset \cpx^n$ be a domain and $f: \mathcal{D} \to \cpx$  a holomorphic function.  Denote its critical points by $\mathrm{cr}_1, \ldots, \mathrm{cr}_N$.  There is a natural homomorphism of algebras $\Jac (f) \to \cpx^N$ by sending $g \in \Jac (f)$ to $\big(g(\mathrm{cr}_1), \ldots, g(\mathrm{cr}_N)\big)$.  $f$ is said to be semi-simple if this is an isomorphism of algebras.
\end{defn}

In the above definition, $\cpx^N$ is equipped with the standard algebra structure:
$$(a_1, \ldots, a_N) \cdot (b_1, \ldots, b_N) := (a_1 b_1, \ldots, a_N b_N).$$

Going back to our situation, we would like to consider the deformation theory of $W^{\text{PF}}_q$ (or $W^{\text{LF}}_q$).  By Corollary 5.12 of Iritani's paper \cite{iritani07}, the quantum cohomology $QH^*(X,q)$ of a projective toric manifold is semi-simple for a generic K\"ahler class $q$.  By Theorem \ref{Giv_thm2} (Second form of toric mirror theorm), $\Jac(W^{\text{PF}}_q) \cong QH^*(X,q)$, and hence

\begin{theorem}[Semi-simplicity of superpotential \cite{iritani07}] \label{ss}
For generic $q$, $W^{\text{PF}}_q$ is semi-simple.
\end{theorem}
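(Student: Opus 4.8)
The plan is to transport the semi-simplicity of the quantum cohomology algebra $QH^*(X,q)$ to the superpotential $W^{\textrm{PF}}_q$ through the algebra isomorphism $\Jac(W^{\textrm{PF}}_q) \cong QH^*(X,q)$ supplied by Theorem \ref{Giv_thm2}. The only real content is to reconcile two a priori different meanings of ``semi-simple'': the algebraic one (a finite-dimensional commutative $\cpx$-algebra with no nilpotents, which is what Corollary 5.12 of \cite{iritani07} gives for $QH^*(X,q)$ at generic $q$) and the geometric one in the definition of semi-simplicity above (the evaluation map $\Jac(W^{\textrm{PF}}_q) \to \cpx^N$ at the critical points is an isomorphism).

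First I would note that $\Jac(W^{\textrm{PF}}_q)$ is finite-dimensional over $\cpx$, being isomorphic to $QH^*(X,q)$; hence it is Artinian and $W^{\textrm{PF}}_q$ has only finitely many, necessarily isolated, critical points $\mathrm{cr}_1, \ldots, \mathrm{cr}_N$. An Artinian $\cpx$-algebra splits as a finite product $\prod_{i=1}^N R_i$ of local factors, one supported at each critical point and each with residue field $\cpx$; under this splitting the evaluation map of the definition is exactly the product of the residue-field projections $R_i \twoheadrightarrow R_i/\mathfrak{m}_i \cong \cpx$ (well-defined because the generators $\partial_j W^{\textrm{PF}}_q$ of the Jacobian ideal vanish at every critical point). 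Invoking Corollary 5.12 of \cite{iritani07} together with Theorem \ref{Giv_thm2}, for generic $q$ the ring $\Jac(W^{\textrm{PF}}_q)$ is semi-simple as an algebra, which forces each local factor $R_i$ to be reduced; a reduced local Artinian $\cpx$-algebra equals its residue field $\cpx$, so each projection $R_i \to \cpx$ is an isomorphism and therefore so is the evaluation map $\Jac(W^{\textrm{PF}}_q) \to \cpx^N$. This is precisely the geometric semi-simplicity of $W^{\textrm{PF}}_q$.

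The main obstacle is the bookkeeping that makes the two notions coincide: I must check that the maximal ideals of $\Jac(W^{\textrm{PF}}_q)$ are in honest bijection with the critical points of $W^{\textrm{PF}}_q$ and that the residue-field projection at each maximal ideal is literally evaluation at the corresponding critical point. Since the Jacobian ring is formed from the ring of holomorphic functions rather than polynomials, some care is needed to see that finite-dimensionality still yields the clean product decomposition over the critical locus --- equivalently, that a critical point is non-degenerate exactly when its local factor is $\cpx$, so that ``$\Jac(W^{\textrm{PF}}_q)$ semi-simple as an algebra'' and ``all critical points of $W^{\textrm{PF}}_q$ are Morse'' are one and the same condition.
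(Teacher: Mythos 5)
Your proposal is correct and follows exactly the route the paper takes: the paper derives Theorem \ref{ss} in the two sentences preceding its statement, by combining Corollary 5.12 of \cite{iritani07} (generic semi-simplicity of $QH^*(X,q)$) with the algebra isomorphism $\Jac(W^{\textrm{PF}}_q) \cong QH^*(X,q)$ of Theorem \ref{Giv_thm2}. The only difference is that you spell out the reconciliation between algebraic semi-simplicity and the paper's evaluation-at-critical-points definition, a point the paper leaves implicit; your Artinian local-factor argument for this is sound.
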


Now comes the key point (which is classical) in this section:

\begin{theorem}  \cite{teissier}
Let $\mathcal{D} \subset \cpx^n$ be a domain and $f: \mathcal{D} \to \cpx$  a holomorphic function.  Suppose that $f$ is semi-simple.  Let $g_1, \ldots, g_N$ be a basis of $\Jac (f) \cong \cpx^N$.  Then the function
$\mathcal{W}: \cpx^N \times (\cpx^*)^n \to \cpx$ defined by
$$ \mathcal{W}(Q_1, \ldots, Q_N, z) := f(z) + \sum_{j=1}^N Q_j g_j(z) $$
is a universal unfolding of $f$.
\end{theorem}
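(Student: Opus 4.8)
The plan is to verify the universal property directly, using semi-simplicity to present $f$ as a holomorphic Morse function with exactly $N = \dim_{\cpx} \Jac(f)$ non-degenerate critical points $\mathrm{cr}_1, \ldots, \mathrm{cr}_N$ (non-degeneracy is forced since each local factor of $\Jac(f) \cong \cpx^N$ is one-dimensional). The conceptual core is the \emph{infinitesimal} universality of $\mathcal{W}$: its Kodaira--Spencer map at the origin,
$$ \cpx^N = T_0 \cpx^N \longrightarrow \Jac(f), \qquad \dd{Q_j} \longmapsto \Big[ \dd{Q_j} \mathcal{W}\big|_{Q=0} \Big] = [g_j], $$
is an isomorphism precisely because $g_1, \ldots, g_N$ is a basis of $\Jac(f)$. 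Concretely this says that every holomorphic function on $\mathcal{D}$ splits as an element of the Jacobian ideal $\mathcal{O}_{\mathcal{D}}\langle \partial_1 f, \ldots, \partial_n f\rangle$ plus a $\cpx$-linear combination of the $g_j$; the first summand records the first-order effect of changing the $z$-coordinates and the second records the first-order effect of moving the unfolding parameter, so together they exhaust all infinitesimal deformations of $f$.

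Given an arbitrary unfolding $(V, W)$ of $f$, I must produce a shrunk neighbourhood $V' \ni 0$, a holomorphic map $\mathcal{Q}: V' \to \cpx^N$ with $\mathcal{Q}(0) = 0$, and a fibre-preserving map $\Phi(v,z) = (\mathcal{Q}(v), \psi(v,z))$ with $\psi(0,\cdot) = \mathrm{id}_{\mathcal{D}}$ solving
$$ W(v,z) = f(\psi(v,z)) + \sum_{j=1}^N \mathcal{Q}_j(v)\, g_j(\psi(v,z)). $$
To build $\mathcal{Q}$ I would first apply the holomorphic implicit function theorem to $\partial_z W(v,\cdot) = 0$: for $v$ near $0$ the function $W(v,\cdot)$ again has exactly $N$ non-degenerate critical points $p_i(v)$ with $p_i(0) = \mathrm{cr}_i$, depending holomorphically on $v$, and likewise $\mathcal{W}(Q,\cdot)$ has critical points $\tilde{p}_i(Q)$. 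Since $\partial_z \mathcal{W}$ vanishes at a critical point, the derivative at $Q=0$ of the critical-value map $Q \mapsto \big(\mathcal{W}(Q,\tilde{p}_i(Q))\big)_i$ equals the matrix $\big(g_j(\mathrm{cr}_i)\big)_{i,j}$, which is invertible exactly because the $g_j$ form a basis under the evaluation isomorphism $\Jac(f) \cong \cpx^N$. The inverse function theorem then yields a unique holomorphic $\mathcal{Q}(v)$, with $\mathcal{Q}(0)=0$, for which $\mathcal{W}(\mathcal{Q}(v),\cdot)$ has the same critical values as $W(v,\cdot)$; near each $p_i(v)$ the holomorphic Morse lemma expresses both functions as their common critical value plus a sum of squares, so a local fibrewise biholomorphism matching them exists and restricts to the identity at $v=0$.

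The step I expect to be the main obstacle is globalising $\psi$: patching these local coordinate changes near the critical points into a single holomorphic $\psi(v,\cdot)$ on all of $\mathcal{D}$ that satisfies the displayed identity away from the critical set as well. The clean way to organise this is an order-by-order construction in $v$: supposing the identity solved modulo degree $k$, the degree-$(k{+}1)$ error is a holomorphic function on $\mathcal{D}$ which, by the decomposition $\mathcal{O}_{\mathcal{D}} = \mathcal{O}_{\mathcal{D}}\langle \partial_1 f, \ldots, \partial_n f\rangle + \mathrm{span}_{\cpx}\{g_j\}$, is cancelled by correcting $\psi$ through the Jacobian-ideal part (an infinitesimal coordinate move $z \mapsto z + \sum_i \eta_i \partial_i f$) and correcting $\mathcal{Q}$ through the $g_j$-part. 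This yields a formal solution, and the surviving analytic issue---convergence of the resulting series in $v$---is settled by the standard finiteness and division arguments for isolated singularities (Weierstrass--Malgrange preparation, or Artin approximation), whose hypotheses hold here thanks to the non-degeneracy from semi-simplicity. Finally, the uniqueness of the induced tangent map required in condition (2) is automatic, since $d\mathcal{Q}_0$ is forced to be the Kodaira--Spencer map of $W$ composed with the inverse of the isomorphism above.
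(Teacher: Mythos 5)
The first thing to note is that the paper does not prove this statement at all: it is quoted as a classical fact with a citation to Teissier, so there is no in-paper argument to compare yours against. Judged on its own terms, your outline follows the standard versality argument, and the pointwise computations are correct: semi-simplicity does force every critical point to be non-degenerate (each local Milnor algebra must inject into $\cpx$ under evaluation, hence is one-dimensional), the Kodaira--Spencer map $\dd{Q_j}\mapsto[g_j]$ is an isomorphism by the basis hypothesis, and the derivative of the critical-value map is the invertible matrix $\big(g_j(\mathrm{cr}_i)\big)_{i,j}$, which pins down $\mathcal{Q}$ and the uniqueness of $d\mathcal{Q}_0$ demanded by condition (2). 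This last computation is exactly the mechanism the authors themselves exploit later, in Propositions \ref{nondeg_dQ} and \ref{W_curlW}.

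The gap is where you say it is, but it is larger than your phrasing suggests. Matching critical values determines a candidate $\mathcal{Q}(v)$, and the holomorphic Morse lemma gives a fibrewise biholomorphism near each $p_i(v)$ separately, but these local maps have no reason to agree on overlaps or to extend over the part of $\mathcal{D}$ where $W(v,\cdot)$ is a submersion; so the order-by-order construction is not a patching device for them --- it is the entire proof of the existence of $\psi$, and the Morse-lemma step can be discarded. (The standard way to organise the globalisation is the Thom--Levine homotopy method: solve $\partial_v W=\sum_i\eta_i\,\partial_{z_i}W+\sum_j c_j g_j$ for a $v$-dependent vector field and constants, using the splitting $\mathcal{O}_{\mathcal{D}}=\mathcal{O}_{\mathcal{D}}\langle\partial_1 f,\ldots,\partial_n f\rangle+\mathrm{span}_{\cpx}\{g_j\}$, and integrate; this needs that splitting to hold for all nearby $W(v,\cdot)$ and at the level of global sections, i.e.\ $\mathcal{D}$ Stein or shrunk to a neighbourhood of the critical set --- note the authors do permit shrinking $\mathcal{D}$ in Proposition \ref{sat_cond}.) Finally, the convergence of the formal solution, or equivalently the holomorphic dependence of the integrated flow on $v$, is the genuinely hard analytic content of Teissier's theorem; ``Weierstrass--Malgrange preparation, or Artin approximation'' names the tools but does not carry out the argument. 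So what you have is a correct and well-organised reduction of the theorem to the standard convergence machinery, not a self-contained proof; since the paper itself black-boxes the whole statement, that is a defensible place to stop, but you should say explicitly that the last step is being cited rather than proved.
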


In particular, when the critical values of $f$ are pairwise distinct, $\{g_j := f^{j-1}\}_{j=1}^N$ gives a basis of $\Jac (f)$.  Thus
$$ \mathcal{W}(Q, z) := f(z) + \sum_{j=0}^{N-1} Q_j f^j (z) $$
gives a universal unfolding of $f$.

\section{Equivalence between mirror map and Lagrangian Floer approach} \label{FOOO=G}

In Sections \ref{Giv} and \ref{FOOO}, we introduce the superpotentials mirror to a semi-Fano toric manifold written down by using toric mirror maps and Lagrangian Floer theory respectively. Their Jacobian rings are both isomorphic to the small quantum cohomology ring of $X$.  It is natural to conjecture that these two superpotentials are indeed the same.  In this section we prove such a statement, under the technical assumption that the instanton-corrected superpotential constructed from the Lagrangian-Floer approach has convergent coefficients.  Since $W^{\mathrm{PF}}$ is written in terms of the toric mirror transform, while $W^{\mathrm{LF}}$ is written in terms of one-pointed open GW invariants, an interesting consequence of such an equality is that one-pointed open GW invariants can be computed by the toric mirror transform.

We will see in Subsections \ref{surface} and \ref{can_line} that the above convergence assumption is satisfied when $X$ is of dimension two, or when $X = \proj(K_S\oplus\mathcal{O}_S)$ for some toric Fano manifold $S$.  

\subsection{The main theorem}

Let $X$ be a projective semi-Fano toric $n$-fold whose generators of rays in its fan are denoted by $v_1, \ldots, v_m$.  One has $\rank (H_2(X)) = l := m-n$.

Recall that
$$ W^{\text{LF}} = z_1 + \ldots + z_n + \sum_{k=n+1}^m \frac{(1+\delta_k) q^{\alpha_k}}{\prod_{j=1}^n (1+\delta_j)^{v_k^j}} z^{v_k} $$
and
$$ W^{\text{PF}} = z_1 + \ldots + z_n + \sum_{k=n+1}^{m} \hat{q}^{\alpha_k}(q) z^{v_k}. $$
Based on the coefficients of these expressions, we define the following `mirror maps':

\begin{defn}
Let $X$ be a toric semi-Fano $n$-fold whose generators of rays in its fan are denoted by $v_1, \ldots, v_m$.  Define $P^{\text{LF}} = (P^{\text{LF}}_1, \ldots, P^{\text{LF}}_l)$,
$$ P_i^{\text{LF}} := \frac{(1+\delta_{i+n}) q^{\alpha_{i+n}}}{\prod_{j=1}^n (1+\delta_j)^{v_{i+n}^j}} $$
for $i=1, \ldots, l$.  On the other hand, define $P^{\text{PF}} = (P^{\text{PF}}_1, \ldots, P^{\text{PF}}_l)$,
$$ P_i^{\text{PF}} = \hat{q}^{\alpha_{i+n}}(q)$$
for $i=1, \ldots, l$.
\end{defn}

The readers are referred to Sections \ref{Giv} and \ref{FOOO} for the explanations of notations involved in the above definition.  For each $i=1, \ldots, l$, $P_i^{\text{LF}}$ is a formal power series living in the universal Novikov ring $\Lambda_0$.  On the other hand, since $\hat{q}_i(q)$, $i=1, \ldots, l$, are convergent for $\|q\|$ sufficiently small by Iritani's work \cite{iritani07}, $P^{\text{PF}}$ defines a holomorphic map from a neighborhood of $0 \in \cpx^l$ to $\cpx^l$.

Our main theorem can now be stated as follows:

\begin{theorem} \label{main_thm}
Let $X$ be a compact toric semi-Fano manifold, $W^{\text{LF}}_q$ and $W^{\text{PF}}_q$ the instanton-corrected superpotentials by use of Lagrangian Floer theory and toric mirror maps respectively.  Then
$$ W^{\text{LF}}_q = W^{\text{PF}}_q $$
provided that there exists an open polydisk $U_q \subset \cpx^l$ centered at $q = 0 \in \cpx^l$ in which $P^{\text{LF}}$ defined above converges and defines an analytic map $U_q \to \cpx^l$.
\end{theorem}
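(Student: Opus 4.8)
The plan is to reduce the equality of superpotentials to an equality of the two ``mirror maps'' $P^{\textrm{LF}}$ and $P^{\textrm{PF}}$, and then to pin both maps down by the common Frobenius-algebra data they realize. First I would note that $W^{\textrm{LF}}_q$ and $W^{\textrm{PF}}_q$ are already presented in the same normalized shape $z_1 + \ldots + z_n + \sum_{k=n+1}^m c_k(q)\, z^{v_k}$, so that $W^{\textrm{LF}}_q = W^{\textrm{PF}}_q$ is equivalent to $P^{\textrm{LF}} = P^{\textrm{PF}}$ as maps near $q=0$; this is exactly where the convergence hypothesis enters, guaranteeing that both are genuine holomorphic maps rather than merely formal. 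Since $l = m-n$ and the classes $\alpha_{n+1}, \ldots, \alpha_m$ form a $\integer$-basis of $H_2(X)$ (because $v_1,\ldots,v_n$ is a lattice basis of $N$, via the exact sequence \eqref{pi2pi1}), the monomial map $\sigma \mapsto (\sigma^{\alpha_{n+1}}, \ldots, \sigma^{\alpha_m})$ is an isomorphism of algebraic tori. Hence every potential of this normalized shape with nonvanishing coefficients is a member of the Hori-Vafa family, and I may write $W^{\textrm{LF}}_q = W^{\circ}_{\sigma(q)}$ and $W^{\textrm{PF}}_q = W^{\circ}_{\hat q(q)}$. The problem becomes: two reparametrizations $\sigma$ and $\hat q$ of the single family $\{W^{\circ}_{q'}\}$ must coincide.

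Next I would bring in the deformation theory of Section \ref{unfold}. By Theorem \ref{ss}, $W^{\textrm{PF}}_q$ is semi-simple for generic $q$, and transporting this across the chain of Frobenius-algebra isomorphisms $\Jac(W^{\textrm{PF}}_q) \cong QH^*(X,q) \cong \Jac(W^{\textrm{LF}}_q)$ (Theorems \ref{Giv_thm2} and \ref{mir_thm_FOOO}) shows $W^{\textrm{LF}}_q$ is semi-simple for generic $q$ as well, so the universal unfolding machinery is available for both families. The decisive common datum is that each family realizes the \emph{same} Frobenius algebra $QH^*(X,q)$ through the \emph{same} normalization $\mathbf{p}_k \mapsto q_k \partial_{q_k} W$: for either choice $\bullet \in \{\textrm{LF}, \textrm{PF}\}$ the Kodaira-Spencer map $q_k \partial_{q_k} \mapsto [\,q_k \partial_{q_k} W^{\bullet}_q\,] \in \Jac(W^{\bullet}_q)$ agrees, under the mirror isomorphism, with quantum multiplication by $\mathbf{p}_k$ on $QH^*(X,q)$.

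The heart of the argument is then a rigidity statement: a reparametrization of the Hori-Vafa family is uniquely determined by requiring that its Kodaira-Spencer map reproduce the quantum product of $QH^*(X,q)$ with this normalization. I would realize this by fixing a generic semi-simple fibre $f = W^{\circ}_{q'_0}$, passing to its universal unfolding $\mathcal{W}(Q,z) = f(z) + \sum_j Q_j g_j(z)$, and factoring both $q$-families through $\mathcal{W}$. Since the classifying map of any unfolding into a universal unfolding is determined by its induced map on tangent spaces, and the two families induce the same tangent data along the $H^2(X)$-directions, their classifying maps share a first-order jet; together with the matching initial condition at the large-volume limit $q \to 0$ (where $\sigma$ and $\hat q$ have the same linear part, the identity in logarithmic coordinates), a flat-coordinate uniqueness argument forces $\sigma = \hat q$ as formal power series, and the convergence hypothesis upgrades this to an identity of holomorphic maps.

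The main obstacle I anticipate is precisely this rigidity step, for two reasons. First, the definition of universal unfolding permits a simultaneous change of the fibre coordinates $z$ (the map $\Phi$ in the diagram of Section \ref{unfold}), so ``equal classifying maps'' does not immediately give ``equal potentials''; I will need to show that for the Hori-Vafa family these coordinate ambiguities can be normalized away, exploiting that both potentials already sit in the rigid normal form $z_1 + \ldots + z_n + \sum_k c_k z^{v_k}$. Second, turning coincidence of Kodaira-Spencer data into coincidence of the maps themselves requires a genuine integration/uniqueness theorem for flat coordinates of the induced Frobenius structure, with the correct normalization fixed at $q=0$; verifying that the LF side matches the \emph{full} Frobenius structure (pairing included) and not merely the ring structure is the delicate point, and is where FOOO's refinement that the isomorphism respects the Frobenius pairing becomes essential.
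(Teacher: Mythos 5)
Your proposal follows essentially the same route as the paper: reduce to the equality of the mirror maps $P^{\textrm{LF}}=P^{\textrm{PF}}$, invoke semi-simplicity at a generic fibre, factor both families through the universal unfolding $\mathcal{W}(Q,z)=f(z)+\sum_j Q_j f^j(z)$, match the classifying maps via the common normalization $\mathbf{p}_k\mapsto q_k\partial_{q_k}W$ of the two Jacobian-ring isomorphisms with $QH^*(X,q)$ (the paper resolves the fibre-coordinate ambiguity you flag by showing critical points track critical points, so $\mathcal{Q}$ is pinned down by critical values), and integrate the resulting equality of derivatives using the initial condition at $q\to 0$. The only minor discrepancy is that the Frobenius pairing is not actually needed: the paper's rigidity lemma (an algebra automorphism of $\cpx^N$ is a permutation of the primitive idempotents) uses only the ring structure together with the agreement of leading-order terms.
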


The proof of the above theorem occupies the rest of this subsection.  The idea is the following: Using Theorem \ref{Giv_thm2} and \ref{mir_thm_FOOO}, $\Jac (W^{\text{LF}}_q) \cong \Jac (W^{\text{PF}}_q)$.  From this together with semi-simplicity, it follows that $W^{\text{LF}}_q$ and $W^{\text{PF}}_q$ have the same critical values.  Then we put them into a universal unfolding and this gives us constant families of critical points linking
those of $W^{\text{PF}}_q$ and $W^{\text{LF}}_q$.  Since they have the same critical values, they indeed correspond to the same based point in the universal family, and hence they are indeed the same.

We always assume that there exists an open polydisk $U_q \subset \cpx^l$ centered at $q = 0 \in \cpx^l$ in which $P^{\text{LF}}$ converges and defines an analytic map $U_q \to \cpx^l$.  By Theorem \ref{conv}, $P^{\text{PF}}$ is convergent in an open neighborhood of $q=0$.  Thus by shrinking $U_q$ if necessary, we may assume that $P^{\text{PF}}$ also defines an analytic map on $U_q$.

Let $W:\cpx^l \times (\cpx^\times)^n \to \cpx$ be the map
\begin{equation} \label{W_P}
W(P, z) := z_1 + \ldots + z_n + \sum_{k=n+1}^m P_{k-n} z^{v_k}.
\end{equation}
Then $W^{\text{LF}}(q,z) = W(P^{\text{LF}}(q),z)$ and $W^{\text{PF}}(q,z) = W(P^{\text{PF}}(q),z)$.  It suffices to prove that $P^{\text{LF}} = P^{\text{PF}}$ on $U_q$.

By Theorem \ref{ss}, $W^{\text{PF}}_q = W_{P^{\text{PF}}(q)}$ is semi-simple for generic $q$.  Moreover $P^{\text{PF}}$ is holomorphic which maps $U_q$ onto an open set of $\cpx^l$ around $P = 0$.  Thus $W_{P}$ is semi-simple for generic $P$.  Let $N$ be the number of critical points of $f$.  For generic $P \in (\cpx^\times)^l$, $W_P$ has $N$ distinct critical values.  Thus we can fix a base-point $\underline{q} = (\underline{q}_1, \ldots, \underline{q}_l) \in U_q$ with $P^{\text{LF}}(\underline{q}), P^{\text{PF}}(\underline{q}) \in (\cpx^\times)^l$ such that $W^{\text{LF}}_{\underline{q}} = W_{P^{\text{LF}}(\underline{q})}$ and $W^{\text{PF}}_{\underline{q}} = W_{P^{\text{PF}}(\underline{q})}$ are semi-simple, and both $W_{P^{\text{LF}}(\underline{q})}$ and $W_{P^{\text{PF}}(\underline{q})}$ have $N$ distinct critical values.

Denote $\underline{P} = P^{\text{LF}} (\underline{q}) \in (\cpx^\times)^l$ and let $f = W_{\underline{P}}$ which is semi-simple and has $N$ distinct critical values.  Let $\mathcal{D} \subset (\cpx^\times)^n$ be a bounded domain containing all the critical points of $f$.  Then $\mathcal{W}: \cpx^N \times \mathcal{D} \to \cpx$ defined by
\begin{equation} \label{curl_W}
\mathcal{W}(Q,\zeta) := f(\zeta) + \sum_{i=0}^{N-1} Q_i \big(f(\zeta)\big)^i
\end{equation}
for $Q = (Q_1, \ldots, Q_{N-1}) \in \cpx^N$ gives a universal unfolding of $f|_{\mathcal{D}}$ (see Section \ref{unfold}).

\begin{prop} \label{same_crit}
Let $f: \mathcal{D} \to \cpx$ be a semi-simple holomorphic function on a bounded domain $\mathcal{D} \subset \cpx^n$ with $\dim(\Jac(f)) = N$, and $\mathcal{W}: \cpx^N \times \mathcal{D} \to \cpx$  the holomorphic function defined by Equation \eqref{curl_W}.  Then there exists an open neighborhood $\Delta_Q$ of $0 \in \cpx^N$ such that for every $Q \in \Delta_Q$, $\mathcal{W}_Q$ and $f$ have the same set of critical points.
\end{prop}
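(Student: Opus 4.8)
The plan is to exploit that $\mathcal{W}_Q$ depends on $\zeta$ only through the value $f(\zeta)$. Writing $G_Q(w) := w + \sum_{i=0}^{N-1} Q_i w^i$ for the one-variable polynomial with coefficients parametrized by $Q$, we have $\mathcal{W}_Q = G_Q \circ f$. Differentiating by the chain rule, for each coordinate $\zeta_j$ one gets
$$ \frac{\partial \mathcal{W}_Q}{\partial \zeta_j}(\zeta) = G_Q'\big(f(\zeta)\big)\, \frac{\partial f}{\partial \zeta_j}(\zeta), \qquad G_Q'(w) = 1 + \sum_{i=1}^{N-1} i\, Q_i\, w^{i-1}. $$
Hence $\zeta$ is a critical point of $\mathcal{W}_Q$ if and only if $df(\zeta) = 0$ or $G_Q'(f(\zeta)) = 0$. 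This already yields one inclusion valid for every $Q$: if $df(\zeta) = 0$ then $d\mathcal{W}_Q(\zeta) = 0$, so every critical point of $f$ is a critical point of $\mathcal{W}_Q$. The real content is the reverse inclusion for small $Q$, i.e.\ that no spurious critical point $\zeta$ with $df(\zeta) \neq 0$ and $G_Q'(f(\zeta)) = 0$ arises.

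To rule out such spurious points I would use a uniform estimate on the image of $f$. We may take $\mathcal{D}$ with compact closure $\overline{\mathcal{D}}$ inside the locus where $f$ is holomorphic (in the application $f$ is a Laurent polynomial on $(\cpx^\times)^n$), so that $f(\overline{\mathcal{D}})$ is compact, say contained in $\{\, |w| \leq R \,\}$. Since $G_0' \equiv 1$ and $G_Q'(w)$ is polynomial in $(Q,w)$, we have $G_Q'(w) \to 1$ uniformly for $|w| \leq R$ as $Q \to 0$. Thus I can choose a polydisk $\Delta_Q$ around $0 \in \cpx^N$ small enough that $|G_Q'(w) - 1| < \tfrac{1}{2}$ for all $Q \in \Delta_Q$ and all $w \in f(\overline{\mathcal{D}})$, whence $G_Q'(f(\zeta)) \neq 0$ for every $\zeta \in \mathcal{D}$ and every $Q \in \Delta_Q$. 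For such $Q$ the vanishing of $d\mathcal{W}_Q(\zeta)$ forces $df(\zeta) = 0$, and combined with the previous inclusion we conclude that $\mathcal{W}_Q$ and $f$ have exactly the same critical points.

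I expect the only delicate point to be the boundedness of $f$ on $\mathcal{D}$ used in the uniform estimate: on an arbitrary bounded domain a holomorphic $f$ could be unbounded near the boundary, and then $G_Q'(f(\zeta))$ might vanish for $\zeta$ with large $|f(\zeta)|$ no matter how small $Q$ is. This is precisely why one works with a domain whose closure is compact in the holomorphy locus of $f$, which is harmless here since $f$ is a global Laurent polynomial and $\mathcal{D}$ is chosen to be relatively compact in $(\cpx^\times)^n$. Note that semi-simplicity is not actually needed for this step; it only guarantees that the critical set in question is finite, so that the asserted equality of critical sets is the expected one.
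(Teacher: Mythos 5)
Your argument is essentially identical to the paper's proof: both differentiate $\mathcal{W}_Q = G_Q\circ f$ via the chain rule, observe that the scalar factor $1+\sum_i i\,Q_i f^{i-1}(\zeta)$ is uniformly close to $1$ for small $Q$ because $f$ is bounded on $\mathcal{D}$, and conclude the critical sets coincide. Your remark that one should really take $\mathcal{D}$ relatively compact in the holomorphy locus (since a holomorphic function on a bounded domain need not be bounded) is a fair and slightly more careful reading of the hypothesis the paper uses implicitly, but it does not change the approach.
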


\begin{proof}
From Equation \eqref{curl_W}, one has
$$ \frac{\partial \mathcal{W}(Q,\zeta)}{\partial \zeta_i} = \left( 1 + \sum_{i=0}^{N-1} i\, Q_i f^{i-1}(\zeta) \right) \frac{\partial{f(\zeta)}}{\partial \zeta_i}.$$
In the bounded domain $\mathcal{D}$, $f^{i-1}$ is bounded for all $i = 0, \ldots, N-1$.  Thus there exists an open neighborhood $\Delta_Q$ of $0 \in \cpx^N$ such that for all $Q \in \Delta_Q$ and $\zeta \in \mathcal{D}$,
$1 + \sum_{i=0}^{N-1} i\, Q_i f^{i-1}(\zeta)$ is never zero.  Thus $\frac{\partial \mathcal{W}(Q,\zeta)}{\partial \zeta_i} = 0$ if and only if $\frac{\partial{f(\zeta)}}{\partial \zeta_i} = 0$, and hence $\mathcal{W}_Q$ and $f$ have the same set of critical points.
\end{proof}

On the other hand, $W_P$ gives an unfolding of $f$.  From now on, for all $P \in \cpx^l$ we restrict the domain of $W_P$ to $\mathcal{D}$ and still denote it by $W_P$ to simplify notations.  As a result, there exists an open polydisk $\Delta_P \subset (\cpx^\times)^l$ centered at $\underline{P}$, holomorphic maps $\mathcal{Q}:\Delta_P \to \cpx^N$ and $\zeta:\Delta_P \times \mathcal{D} \to (\cpx^\times)^n$ with $\mathcal{Q} (\underline{P}) = 0$ and $\zeta (\underline{P}, z) = z$ such that
\begin{equation} \label{unfold_eq}
W(P,z) = \mathcal{W}(\mathcal{Q} (P),\zeta (P, z))
\end{equation}
for all $(P,z) \in \Delta_P \times \mathcal{D}$.

\begin{prop} \label{nondeg_dQ}
Let $f: \mathcal{D} \to \cpx$ be a semi-simple holomorphic function on a bounded domain $\mathcal{D} \subset \cpx^n$ with $\dim(\Jac(f)) = N$, and let $\mathcal{W}: \cpx^N \times \mathcal{D} \to \cpx$ be defined by Equation \eqref{curl_W}.  Let $W: U \times \mathcal{D} \to \cpx$ be a holomorphic function with $W_{\underline{P}} = f$ for some $\underline{P} \in U$.  Suppose $\mathcal{Q}:U \to \cpx^N$ and $\zeta:U \times \mathcal{D} \to (\cpx^\times)^n$ are holomorphic functions with $\mathcal{Q} (\underline{P}) = 0$ and $\zeta (\underline{P}, z) = z$ such that
$$W(P,z) = \mathcal{W}(\mathcal{Q} (P),\zeta (P, z)).$$
If $\left\{\left. \dd{P_k} \right|_{\underline{P}} W_P: k=1, \ldots, l\right\}$ is a linearly independent subset of the vector space $\Jac (f)$, then the Jacobian matrix $\frac{\partial\mathcal{Q}}{\partial P}(\underline{P})$ is non-degenerate.
\end{prop}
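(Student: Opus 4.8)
The plan is to differentiate the defining relation of the unfolding and read off the consequence inside $\Jac(f)$. Concretely, I would fix $k \in \{1, \ldots, l\}$, differentiate the identity $W(P,z) = \mathcal{W}(\mathcal{Q}(P),\zeta(P,z))$ with respect to $P_k$, and evaluate at $P = \underline{P}$, where $\mathcal{Q}(\underline{P}) = 0$ and $\zeta(\underline{P},z) = z$. The key point is that the terms produced by $\partial\zeta_j/\partial P_k$ will land in the Jacobian ideal and therefore die in $\Jac(f)$, leaving a clean linear relation between the tangent vectors $\left.\dd{P_k}\right|_{\underline{P}} W_P$ and the columns of the Jacobian matrix $\frac{\partial\mathcal{Q}}{\partial P}(\underline{P})$.

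Carrying this out, the chain rule gives
\begin{equation*}
\frac{\partial W}{\partial P_k}(\underline{P},z) = \sum_{i=0}^{N-1} \frac{\partial \mathcal{Q}_i}{\partial P_k}(\underline{P}) \, f^i(z) + \sum_{j=1}^n \frac{\partial f}{\partial z_j}(z) \, \frac{\partial \zeta_j}{\partial P_k}(\underline{P},z),
\end{equation*}
where I used $\partial\mathcal{W}/\partial Q_i = f^i$ and $\partial\mathcal{W}/\partial\zeta_j = \big(1 + \sum_i i\,Q_i f^{i-1}\big)\,\partial f/\partial\zeta_j$, both evaluated at $Q=0$, $\zeta=z$. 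The second sum is a holomorphic combination of the partials $\partial f/\partial z_j$, so it lies in the ideal $\mathcal{O}_{\mathcal{D}}\langle \partial_1 f, \ldots, \partial_n f\rangle$ and vanishes in $\Jac(f)$. Passing to $\Jac(f)$ therefore yields
\begin{equation*}
\left[\left.\dd{P_k}\right|_{\underline{P}} W_P\right] = \sum_{i=0}^{N-1} \frac{\partial \mathcal{Q}_i}{\partial P_k}(\underline{P}) \, [f^i] \qquad \text{in } \Jac(f).
\end{equation*}

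To finish, I would invoke semi-simplicity. Since $f$ is semi-simple and $\underline{P}$ was chosen so that $f = W_{\underline{P}}$ has $N$ distinct critical values, the evaluation isomorphism $\Jac(f) \cong \cpx^N$ carries $[f^i]$ to the vector of $i$-th powers of the critical values, and the resulting Vandermonde matrix is invertible; hence $\{[1],[f],\ldots,[f^{N-1}]\}$ is a basis of $\Jac(f)$. The displayed identity then says exactly that the coordinate vector of $\big[\left.\dd{P_k}\right|_{\underline{P}} W_P\big]$ in this basis is the $k$-th column of $\frac{\partial\mathcal{Q}}{\partial P}(\underline{P})$. Consequently the $l$ elements $\big\{\left.\dd{P_k}\right|_{\underline{P}} W_P\big\}_{k=1}^l$ are linearly independent in $\Jac(f)$ if and only if the $l$ columns of $\frac{\partial\mathcal{Q}}{\partial P}(\underline{P})$ are linearly independent, that is, the Jacobian matrix has full column rank, which is the asserted non-degeneracy. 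The only step requiring genuine care beyond formal manipulation is the basis claim: it is the distinctness of the critical values, not semi-simplicity alone, that makes the powers of $f$ a basis, so I would be sure this hypothesis (built into the choice of $\underline{P}$ in the main proof) is explicitly in force.
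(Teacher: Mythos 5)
Your proof is correct and follows essentially the same route as the paper: differentiate the unfolding identity in $P_k$, kill the $\zeta$-terms (you do this by passing to $\Jac(f)$, the paper by evaluating at critical points — equivalent under the semi-simplicity isomorphism $\Jac(f)\cong\cpx^N$), and then use invertibility of the Vandermonde matrix of critical values to transfer linear independence to the columns of $\frac{\partial\mathcal{Q}}{\partial P}(\underline{P})$. Your closing caveat is well placed: distinctness of the critical values is indeed the hypothesis that makes $\{[1],[f],\ldots,[f^{N-1}]\}$ a basis, and it is arranged by the choice of $\underline{P}$ in the surrounding argument rather than stated in the proposition itself.
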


\begin{proof}
Differentiating both sides of the above equation with respect to $P_k$ ($k=1,\ldots,l$), one has
$$ \frac{\partial W}{\partial P_k} (P,z) = \sum_{i=1}^{N} \frac{\partial \mathcal{W}}{\partial Q_i}(\mathcal{Q} (P),\zeta (P, z)) \frac{\partial\mathcal{Q}_i}{\partial P_k}(P) + \sum_{j=1}^{n} \frac{\partial \mathcal{W}}{\partial \zeta_j}(\mathcal{Q} (P),\zeta (P, z)) \frac{\partial \zeta_j}{\partial P_k}(P,z)$$
for all $(P,z)$.  Now take $P = \underline{P}$ and $z$ to be a critical point $\mathrm{cr}$ of $f$.  Then $\mathcal{Q} (\underline{P}) = 0$ and $\zeta (\underline{P}, \mathrm{cr}) = \mathrm{cr}$, and so
$$\frac{\partial \mathcal{W}}{\partial \zeta_j}(\mathcal{Q} (\underline{P}),\zeta (\underline{P}, \mathrm{cr})) = \frac{\partial f}{\partial z_j}(\mathrm{cr}) = 0$$
for all $j = 1, \ldots, n$.  Thus
$$\frac{\partial W}{\partial P_k} (\underline{P},\mathrm{cr}) = \sum_{i=1}^{N} \frac{\partial \mathcal{W}}{\partial Q_i}(0, \mathrm{cr}) \frac{\partial\mathcal{Q}_i}{\partial P_k}(\underline{P}) = \sum_{i=1}^{N} \big(f(\mathrm{cr})\big)^i \frac{\partial\mathcal{Q}_i}{\partial P_k}(\underline{P})$$
for each critical point $\mathrm{cr}$ of $f$.  Label the critical points of $f$ as $\mathrm{cr}_1, \ldots, \mathrm{cr}_N$.  Then
$$\frac{\partial W}{\partial P_k} (\underline{P},\mathrm{cr}_j) = \sum_{i=1}^{N} \big(f(\mathrm{cr}_j)\big)^i \frac{\partial\mathcal{Q}_i}{\partial P_k}(\underline{P}).$$
Since the critical values of $f$ are pairwise distinct, the square matrix $\big(f^i(\mathrm{cr}_j)\big)_{i,j = 1}^N$ is non-degenerate.  Also by assumption $\left\{\left. \dd{P_k} \right|_{\underline{P}} W_P: k=1, \ldots, l\right\}$ is linearly independent in $\Jac (f) \cong \bigoplus_{j=1}^N \cpx\langle \mathrm{cr}_j \rangle$, and so the matrix $\left(\left. \dd{P_k} \right|_{\underline{P}} W_P (\mathrm{cr}_j)\right)_{j,k}$ is non-degenerate.  This implies that the Jacobian matrix $\frac{\partial\mathcal{Q}}{\partial P}(\underline{P})$ is non-degenerate.
\end{proof}

\begin{prop} \label{li_dW}
Let $X$ be a toric manifold and let $W:\cpx^l \times (\cpx^\times)^n\to \cpx$ be defined by Equation \eqref{W_P}.  Fix $\underline{P} \in (\cpx^\times)^l$ and let $f = W_{\underline{P}}$.  Then $\left\{\left. \dd{P_k} \right|_{\underline{P}} W_P: k=1, \ldots, l\right\}$ is a linearly independent subset in $\Jac (f)$.
\end{prop}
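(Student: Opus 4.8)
The plan is to reduce the statement to a combinatorial fact about the toric divisors and then apply the Batyrev--Jacobian isomorphism of Proposition \ref{Bat-Jac}. First I would note that $W$ is affine-linear in the coefficients $P$, so that
$$\left.\dd{P_k}\right|_{\underline P} W_P = z^{v_{k+n}}, \qquad k = 1, \ldots, l,$$
and therefore the assertion is exactly that the monomials $z^{v_{n+1}}, \ldots, z^{v_m}$ have linearly independent images in $\Jac(f)$.

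Next I would establish the combinatorial core. Since $X$ is smooth and $v_1, \ldots, v_n$ generate a cone, they form a $\integer$-basis of $N$; feeding this into the divisor sequence $0 \to M \to \integer^m \to \mathrm{Pic}(X) \to 0$ shows that $D_{n+1}, \ldots, D_m$ form a $\integer$-basis of $\mathrm{Pic}(X)$. Using $\pairing{D_j}{\beta_i} = \delta_{ij}$ one then computes $\pairing{D_j}{\alpha_k} = \delta_{jk}$ for $j,k > n$, so that the dual basis of $H_2(X)$ is $\alpha_{n+1}, \ldots, \alpha_m$. Consequently $q \mapsto (q^{\alpha_{n+1}}, \ldots, q^{\alpha_m})$ is an isomorphism of tori, whence $f = W_{\underline P} = W^{\circ}_{\underline q}$ for a unique $\underline q \in (\cpx^\times)^l$. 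Now Proposition \ref{Bat-Jac} gives $\Jac(f) \cong B_X(\underline q)$, and a direct computation of $q_j \dd{q_j} W^{\circ}$ shows that under this isomorphism $z^{v_k}$ corresponds to $\underline q^{-\alpha_k} w_k$. Because the matrix expressing $w_{n+1}, \ldots, w_m$ in terms of $u_1, \ldots, u_l$ is the (invertible) change of basis from $\{\mathbf p_j\}$ to $\{D_i\}_{i>n}$, the desired independence is equivalent to the independence of $u_1, \ldots, u_l$ in $B_X(\underline q)$, that is, to the injectivity of $H^2(X) \to B_X(\underline q)$.

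For this last step I would grade $\cpx[q^{\pm}][u_1, \ldots, u_l]$ by $\deg u_j = 1$ and $\deg q_j = \pairing{-K_X}{\Psi_j} \geq 0$ (nonnegative because $-K_X$ is nef and $\Psi_j$ is effective), so that the Batyrev relation attached to $d$ is homogeneous of $u$-degree $A_d := \sum_{\pairing{D_j}{d} > 0} \pairing{D_j}{d}$. The key point is that $A_d \geq 2$ for every $d \neq 0$: the toric linear equivalences give the balancing relation $\sum_j \pairing{D_j}{d}\, v_j = 0$, so if $A_d = 1$ a single primitive generator $v_{j_0}$ would equal a nonnegative combination of the remaining rays, and then $\pairing{-K_X}{d} = A_d - B_d \geq 0$ (with $B_d$ the sum of the negative intersection numbers) forces $B_d = 1$ and $v_{j_0} = v_{j_1}$ for distinct rays, a contradiction. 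Thus no Batyrev relation reaches $u$-degree $\leq 1$, and combined with the standard fact (Batyrev) that the graded pieces of $B_X$ have dimensions equal to the even Betti numbers of $X$, the degree-one piece has dimension $b_2(X) = l$ and is spanned by $u_1, \ldots, u_l$, which are therefore linearly independent. I expect this final step to be the main obstacle, since passing from ``the generators have $u$-degree $\geq 2$'' to ``no nonzero linear form lies in the ideal of $B_X(\underline q)$'' requires controlling the loss of homogeneity when $q$ is specialized to a nonzero value; this is precisely where the Betti-number count is needed (or, in the semi-simple situation actually used in the proof of Theorem \ref{main_thm}, the isomorphism $\Jac(f) \cong QH^*(X)$, under which $D_{n+1}, \ldots, D_m$ remain a basis of $H^2(X)$ and hence are independent).
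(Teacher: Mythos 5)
Your reduction to the Batyrev ring is clean and correct as far as it goes: the identities $\left.\dd{P_k}\right|_{\underline{P}}W_P = z^{v_{k+n}}$, the identification of $W_{\underline{P}}$ with $W^{\circ}_{\underline{q}}$ via the dual bases $\{D_k\}_{k>n}$ and $\{\alpha_k\}_{k>n}$, and the translation of the claim into injectivity of $H^2(X)\to B_X(\underline{q})$ via Proposition \ref{Bat-Jac} are all fine. The genuine gap is in the final step, and it is worse than the ``loss of homogeneity'' you flag. The combinatorial claim $A_d\geq 2$ for every $d\neq 0$ is false: the inequality $\pairing{-K_X}{d}\geq 0$ you use to rule out $A_d=1$ holds only for effective $d$, while the Batyrev ideal is generated by relations indexed by \emph{all} of $H_2(X,\integer)$. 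Already for $X=\mathbf{F}_2$ one has $v_4=v_1+v_2+v_3$, giving a class $d$ with $\pairing{D_4}{d}=1$ and $\pairing{D_j}{d}=-1$ for $j=1,2,3$, hence $A_d=1$, $B_d=3$, $\pairing{-K_X}{d}=-2$, and a generator $w_4-\underline{q}^{\,d}\,w_1w_2w_3$ of combined degree one. Moreover any $d\neq 0$ with $\pairing{D_j}{d}\geq 0$ for all $j$ (such $d$ exist, since the cone $\{d:\pairing{D_j}{d}\geq0\ \forall j\}$ is full-dimensional) yields, after specializing $q$, a generator with nonzero constant term; so the specialized ideal is in no sense concentrated in degrees $\geq 2$, and the ``graded pieces have Betti-number dimensions'' statement has no meaning for the non-graded ring $B_X(\underline{q})$. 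For non-Fano semi-Fano $X$ the comparison of $B_X$ with $H^*(X)$ is precisely the delicate point that forces the introduction of the mirror map, so this route cannot be closed by citing Batyrev.

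The paper's proof is a short linear-algebra argument that bypasses all of this: by FOOO's mirror theorem (Theorem \ref{mir_thm_FOOO}) applied at $\underline{q}$ with $\underline{P}=P^{\textrm{LF}}(\underline{q})$, the basis $\{\mathbf{p}_k\}$ of $H^2(X)\subset QH^*(X,\underline{q})$ is sent injectively into $\Jac(f)$ and, by the chain rule, lands in the span of the $l$ elements $\left.\dd{P_k}\right|_{\underline{P}}W_P$; since $l$ linearly independent vectors lie in the span of $l$ vectors, the latter are independent. Your parenthetical fallback is essentially this argument, but routed through the toric mirror theorem (Theorem \ref{Giv_thm2}) rather than the Lagrangian--Floer one; note that version would additionally require the point $\underline{q}$ attached to $\underline{P}$ to lie in the image of the toric mirror map, whereas in the proof of Theorem \ref{main_thm} the proposition is invoked at $\underline{P}=P^{\textrm{LF}}(\underline{q})$, where a priori only the FOOO isomorphism is available.
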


\begin{proof}
Under the isomorphism $ QH^*(X,\omega_{\underline{q}}) \cong \Jac (f) $ given by the Mirror Theorem \ref{mir_thm_FOOO} of FOOO, $\left.\dd{q_i}\right|_{\underline{q}} \in H^2(X)$ is mapped to $\left.\dd{q_i}\right|_{\underline{q}} W^{\text{LF}}_q \in \Jac(f)$ for each $i=1,\ldots,l$.  Thus
$$ \left.\dd{q_i}\right|_{\underline{q}} W^\text{LF}_q = \sum_{k=1}^l \frac{\partial P^\text{LF}_k(\underline{q})}{\partial q_i} \left(\left.\dd{P_k} \right|_{\underline{P}} W_P\right), i=1,\ldots,l$$
is a linearly independent subset of $\Jac(f)$.  This implies the Jacobian matrix $\left(\frac{\partial P^\text{LF}_k(\underline{q})}{\partial q_i}\right)_{i,k=1}^{n}$ is non-degenerate and $\left\{\left. \dd{P_k} \right|_{\underline{P}} W_P: k=1, \ldots, l\right\}$ is a linearly independent subset in $\Jac (f)$.
\end{proof}

\begin{prop} \label{sat_cond}
By contracting $\Delta_P$ and $\mathcal{D}$ if necessary (still requiring that $\mathcal{D}$ contains all the critical points of $f$), we can achieve the following:

\begin{enumerate}
\item $\mathcal{Q}$ is an embedding. \label{embed_cond}
\item $W_P$ is semi-simple for all $P \in \Delta_P$. \label{ss_cond}
\item $\mathcal{W}_{\mathcal{Q}(P)}$ has the same set of critical points as $f$ for all $P \in \Delta_P$. \label{same_crit_cond}
\item $\frac{\partial \zeta}{\partial z}(P,z)$ is non-degenerate for every $P \in \Delta_P$ and $z \in \mathcal{D}$. \label{non-deg_cond}
\end{enumerate}
\end{prop}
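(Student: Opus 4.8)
The plan is to produce each of the four conditions by a local openness or continuity argument and then intersect the finitely many resulting neighborhoods of $\underline{P}$, keeping the smallest contracted domain $\mathcal{D}$. The single key input is that the differential of $\mathcal{Q}$ at $\underline{P}$ is injective: combining Proposition \ref{li_dW}, which gives that $\left\{\left.\dd{P_k}\right|_{\underline{P}} W_P : k = 1, \ldots, l\right\}$ is linearly independent in $\Jac(f)$, with Proposition \ref{nondeg_dQ}, the Jacobian $\frac{\partial \mathcal{Q}}{\partial P}(\underline{P})$ has full rank $l$. Everything else is an exercise in shrinking neighborhoods.

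First I would dispose of conditions \ref{embed_cond} and \ref{same_crit_cond}. Since $d\mathcal{Q}_{\underline{P}}$ is injective, $\mathcal{Q}$ is a holomorphic immersion at $\underline{P}$, and by the local immersion theorem a holomorphic map with injective differential at a point restricts to an embedding on a sufficiently small polydisk about that point; this gives \ref{embed_cond}. For \ref{same_crit_cond}, Proposition \ref{same_crit} furnishes a neighborhood $\Delta_Q$ of $0 \in \cpx^N$ on which $\mathcal{W}_Q$ has the same critical points as $f$; as $\mathcal{Q}$ is continuous with $\mathcal{Q}(\underline{P}) = 0$, the set $\mathcal{Q}^{-1}(\Delta_Q)$ is open and contains $\underline{P}$, so shrinking $\Delta_P$ inside it yields \ref{same_crit_cond}.

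Condition \ref{ss_cond} follows from the openness of semi-simplicity. Because $f = W_{\underline{P}}$ is semi-simple it has exactly $N$ non-degenerate critical points, so the Hessian of $f$ is invertible at each; applying the implicit function theorem to $\partial_z W_P = 0$, these critical points persist and stay non-degenerate for $P$ near $\underline{P}$, and none can be created or lost on a relatively compact piece of $\mathcal{D}$. Hence $W_P$ again has $N$ non-degenerate critical points and is semi-simple for all $P$ in a neighborhood of $\underline{P}$.

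The one place where genuine care is needed is condition \ref{non-deg_cond}, precisely because $\mathcal{D}$ is only a bounded domain and not compact, so a pointwise nonvanishing statement at $\underline{P}$ cannot be promoted directly. Since $\zeta(\underline{P}, z) = z$, the holomorphic function $(P,z) \mapsto \det \frac{\partial \zeta}{\partial z}(P,z)$ is identically $1$ on $\{\underline{P}\} \times \mathcal{D}$. I would first replace $\mathcal{D}$ by a relatively compact subdomain $\mathcal{D}' \Subset \mathcal{D}$ still containing the finitely many critical points of $f$; on the compact set $\overline{\mathcal{D}'}$ the determinant equals $1$ at $P = \underline{P}$, hence by continuity it stays bounded away from $0$ on $\Delta_P \times \overline{\mathcal{D}'}$ for a small enough polydisk $\Delta_P$. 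Renaming $\mathcal{D}'$ as $\mathcal{D}$ gives \ref{non-deg_cond}. Finally I would intersect the four neighborhoods of $\underline{P}$ obtained above, take a polydisk $\Delta_P$ inside the intersection, and keep the contracted $\mathcal{D}$, so that all four conditions hold simultaneously.
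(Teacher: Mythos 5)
Your proof is correct and follows essentially the same route as the paper: non-degeneracy of $\frac{\partial\mathcal{Q}}{\partial P}(\underline{P})$ from Propositions \ref{nondeg_dQ} and \ref{li_dW} for condition (1), pulling back the neighborhood $\Delta_Q$ from Proposition \ref{same_crit} for condition (3), openness of semi-simplicity for condition (2), and passing to a relatively compact subdomain containing the critical points of $f$ to control $\det\frac{\partial\zeta}{\partial z}$ for condition (4). Your treatment of conditions (2) and (4) is in fact slightly more explicit than the paper's, which simply asserts openness of semi-simplicity without the persistence-of-nondegenerate-critical-points argument.
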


\begin{proof}
Combining Propositions \ref{nondeg_dQ} and \ref{li_dW}, $\frac{\partial\mathcal{Q}}{\partial P}(\underline{P})$ is non-degenerate.  Thus $\mathcal{Q}$ is a local embedding around $P = \underline{P}$, and thus Condition \eqref{embed_cond} can be achieved.

$W_P$ is semi-simple at $\underline{P}$, and semi-simplicity is an open condition.  Thus Condition \eqref{ss_cond} can be achieved.

By Proposition \ref{same_crit}, $\mathcal{W}_{Q}$ has the same set of critical points as $f$ in a neighborhood $\Delta_Q$.  The inverse image of $\Delta_Q$ is an open set containing $\underline{P}$, and so Condition \eqref{same_crit_cond} can be achieved.

Since $\zeta_{\underline{P}} = \mathrm{Id}$, $\frac{\partial \zeta}{\partial P}(\underline{P},z)$ is non-degenerate for all $z \in \mathcal{D}$.  We take a compact subset $\bar{\mathcal{D}} \subset \mathcal{D}$ whose interior $\tilde{\mathcal{D}}$ contains all the critical points of $f$, and restrict $\zeta_P$ on $\bar{\mathcal{D}}$. Then for $P$ sufficiently close to $\underline{P}$ and $z \in \bar{\mathcal{D}}$, $\frac{\partial \zeta}{\partial z_i}(P,z)$ is non-degenerate.
\end{proof}

From now on we always take $\Delta_P$ and $\mathcal{D}$ such that all the conditions in Proposition \ref{sat_cond} are satisfied.  The preimage of $\Delta_P$ under $P^{\text{LF}}$ is an open subset of $U_q$ containing the based point $\underline{q}$.  Thus we may take a polydisk $\Delta_q \subset U_q$ centered at $\underline{q}$ such that $P^{\text{LF}}(\Delta_q) \subset \Delta_P$.

Let
$$\hat{\Delta}_P := \{(P,z) \in \Delta_P \times \mathcal{D}: z \text{ is a critical point of } W_P\}.$$
$\hat{\Delta}_P$ is an $N$-fold cover of $\Delta_P$ by projection to the first coordinate, where $N$ is the number of critical points of $f = W_{\underline{P}}$.  Moreover since $W_P$ is semi-simple for all $P \in \Delta_P$, $\hat{\Delta}_P$ consists of $N$ connected components, each containing a point of the form $(\underline{P}, \mathrm{cr}_0)$ where $\mathrm{cr}_0$ is a critical point of $f$.  Thus for every critical point $\mathrm{cr}(P)$ of $W_P$, there exists a unique critical point $\mathrm{cr}_0$ of $f$ such that $(P,\mathrm{cr}(P))$ and $(\underline{P},\mathrm{cr}_0)$ are lying in the same connected component.  We say that $\mathrm{cr}_0$ is the critical point of $f$ corresponding to $\mathrm{cr}(P)$.

\begin{lemma} \label{crit_to_crit}
For $P \in \Delta_P$, let $\mathrm{cr}(P)$ be a critical point of $W_P$, and let $\mathrm{cr}_0$ be the critical point of $f$ corresponding to $\mathrm{cr}(P)$.  Then
$$ \zeta(P,\mathrm{cr}(P)) = \mathrm{cr}_0. $$
\end{lemma}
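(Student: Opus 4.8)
The plan is to read off the conclusion from the universal-unfolding relation \eqref{unfold_eq} by combining a first-order computation with the non-degeneracy and compatibility conditions secured in Proposition \ref{sat_cond}. First I would differentiate the identity $W(P,z) = \mathcal{W}(\mathcal{Q}(P), \zeta(P,z))$ in the $z$-variables alone, holding $P$ fixed, to get
\[
\frac{\partial W}{\partial z_j}(P,z) = \sum_{k=1}^{n} \frac{\partial \mathcal{W}}{\partial \zeta_k}\big(\mathcal{Q}(P), \zeta(P,z)\big)\, \frac{\partial \zeta_k}{\partial z_j}(P,z), \qquad j = 1, \ldots, n.
\]
Evaluating at $z = \mathrm{cr}(P)$ makes the left-hand side vanish, since $\mathrm{cr}(P)$ is by definition a critical point of $W_P$. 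What remains is a homogeneous linear system in the unknowns $\tfrac{\partial \mathcal{W}}{\partial \zeta_k}(\mathcal{Q}(P), \zeta(P,\mathrm{cr}(P)))$ whose coefficient matrix is $\tfrac{\partial \zeta}{\partial z}(P,\mathrm{cr}(P))$. This matrix is invertible by Condition \eqref{non-deg_cond} of Proposition \ref{sat_cond}, so every such partial vanishes; in other words $\zeta(P,\mathrm{cr}(P))$ is a critical point of $\mathcal{W}_{\mathcal{Q}(P)}$.

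Next I would apply Condition \eqref{same_crit_cond}, which asserts that $\mathcal{W}_{\mathcal{Q}(P)}$ and $f$ have the same critical set. Hence $\zeta(P,\mathrm{cr}(P))$ belongs to the finite set $\{\mathrm{cr}_1, \ldots, \mathrm{cr}_N\}$ of critical points of $f$. At this stage the image point is known to be one of the $\mathrm{cr}_j$, but the computation alone does not yet say which; pinning down the correct one is where the argument must do real work.

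The hard part will be to identify $\zeta(P,\mathrm{cr}(P))$ as precisely the corresponding critical point $\mathrm{cr}_0$ rather than some other $\mathrm{cr}_j$, and for this I would run a connectedness argument on the cover $\hat{\Delta}_P$. The assignment $(P,z) \mapsto \zeta(P,z)$ is holomorphic, hence continuous, on $\hat{\Delta}_P$, and by the previous step it takes values in the discrete finite set $\{\mathrm{cr}_1, \ldots, \mathrm{cr}_N\}$; a continuous map from a connected space to a discrete space is locally constant, so $\zeta$ is constant on each of the $N$ connected components of $\hat{\Delta}_P$. Since $(P,\mathrm{cr}(P))$ and $(\underline{P},\mathrm{cr}_0)$ lie in the same component by the definition of $\mathrm{cr}_0$, and since $\zeta(\underline{P},\mathrm{cr}_0) = \mathrm{cr}_0$ because $\zeta(\underline{P},\cdot) = \mathrm{Id}$, the constant value on that component is $\mathrm{cr}_0$. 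Therefore $\zeta(P,\mathrm{cr}(P)) = \mathrm{cr}_0$, which is the claim. The only subtlety to guard against is ensuring $\hat{\Delta}_P$ genuinely splits into exactly $N$ connected components with the stated base points, but this is precisely the semi-simplicity Condition \eqref{ss_cond}, already established before the lemma.
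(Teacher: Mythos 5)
Your proposal is correct and follows essentially the same route as the paper: differentiate the unfolding identity in $z$, use the non-degeneracy of $\tfrac{\partial\zeta}{\partial z}$ to conclude that $\zeta(P,\mathrm{cr}(P))$ is a critical point of $\mathcal{W}_{\mathcal{Q}(P)}$ and hence of $f$, then use the component structure of $\hat{\Delta}_P$ to pin down which one. The paper phrases the last step as a path-lifting argument ($\zeta\circ\hat{\gamma}$ is constant because critical points of $f$ are isolated), which is just your locally-constant-on-connected-components argument in concrete form.
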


\begin{proof}
Differentiating the equation \eqref{unfold_eq} with respect to $z_i$ for $i=1,\ldots,n$, we have
$$ \frac{\partial W}{\partial z_i} (P,z) = \sum_{j=1}^{n} \frac{\partial \mathcal{W}}{\partial \zeta_j} \big(\mathcal{Q}(P),\zeta (P,z)\big) \frac{\partial \zeta_j}{\partial z_i}(P,z).$$
Let $z = \mathrm{cr}(P)$ be a critical point of $W_P$.  Then the left hand side is equal to zero for all $i=1,\ldots,n$.  By Condition \eqref{non-deg_cond} of Proposition \ref{sat_cond}, $\left(\frac{\partial \zeta_j}{\partial z_i}(P,z)\right)_{i,j=1}^n$ is non-degenerate.  Thus for all $j=1,\ldots,n$, we have
$$\frac{\partial \mathcal{W}}{\partial \zeta_j} \big(\mathcal{Q}(P),\zeta (P,\mathrm{cr}(P))\big) = 0,$$
meaning that $\zeta (P,\mathrm{cr}(P))$ is a critical point of $\mathcal{W}_{\mathcal{Q}(P)}$.  By Proposition \ref{same_crit}, $\mathcal{W}_{\mathcal{Q}(P)}$ has the same set of critical points of $f$.  Thus for every $P$, $\zeta (P,\mathrm{cr}(P))$ is a critical point of $f$.

A path $\gamma:[0,1] \to \Delta_P$ joining $P$ and $\underline{P}$ lifts to a path $\hat{\gamma}:[0,1] \to \hat{\Delta}_P$ joining $(P,\mathrm{cr}(P))$  to $(\underline{P}, \mathrm{cr}_0)$, where $\mathrm{cr}_0$ is the critical point of $f$ corresponding to $\mathrm{cr}(P)$.  Now for every $t$, $\hat{\gamma}(t) = (\gamma(t), z)$, where $z$ is a critical point of $W_{\gamma(t)}$.  By the above deduction $\zeta \circ \hat{\gamma}(t)$ is a critical point of $f$.  But the critical points of $f$ are isolated, which forces $\zeta \circ \hat{\gamma}$ to be constant.  Thus
$$\zeta (P,\mathrm{cr}(P))=\zeta \circ \hat{\gamma}(0) = \zeta \circ \hat{\gamma}(1) = \zeta(\underline{P}, \mathrm{cr}_0) = \mathrm{cr}_0.$$
\end{proof}

\begin{prop} \label{W_curlW}
Let $U$ be a contractible open subset of $(\cpx^\times)^l$ containing $\Delta_P \ni \underline{P}$ such that for all $P \in U$, $W_P$ is semi-simple.  Let $\{\mathrm{cr}_i(P)\}$ be the set of critical points of $W_P$, where $\mathrm{cr}_i:U \to (\cpx^\times)^n$ are holomorphic maps.  Denote the critical points of $f$ corresponding to $\mathrm{cr}_i(P)$ by $\underline{\mathrm{cr}}_i$.

Then the holomorphic map $\mathcal{Q}: \Delta_P \to \cpx^N$ extends to $U$.  Moreover
$$ W(P,\mathrm{cr}_i(P)) = \mathcal{W}(\mathcal{Q}(P), \underline{cr}_i) $$
for all $P \in U$ and $i = 1, \ldots, N$.
\end{prop}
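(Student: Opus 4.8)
The plan is to solve for $\mathcal{Q}(P)$ explicitly in terms of the critical values of $W_P$, producing a formula that is manifestly holomorphic on all of $U$, and then to read off both assertions (the extension of $\mathcal{Q}$ and the critical-value identity) directly from that formula. The only inputs are the unfolding relation \eqref{unfold_eq}, Lemma \ref{crit_to_crit}, and the fact that $f = W_{\underline{P}}$ has $N$ pairwise distinct critical values.

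First I would set up the critical-value functions. Since $U$ is contractible and $W_P$ is semi-simple (hence Morse, with $N$ nondegenerate critical points) for every $P \in U$, the $N$-fold covering of $U$ by critical points trivializes into $N$ disjoint holomorphic sections $\mathrm{cr}_i : U \to (\cpx^\times)^n$, which I label so that $\mathrm{cr}_i(\underline{P}) = \underline{\mathrm{cr}}_i$. Put $w_i(P) := W(P,\mathrm{cr}_i(P))$; these are holomorphic on $U$ because $W$ and the $\mathrm{cr}_i$ are. Write $c_i := f(\underline{\mathrm{cr}}_i)$ for the critical values of $f$, which are pairwise distinct by the choice of $\underline{P}$. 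Next, over the smaller polydisk $\Delta_P$ I would combine \eqref{unfold_eq} with Lemma \ref{crit_to_crit}: evaluating \eqref{unfold_eq} at $z = \mathrm{cr}_i(P)$ and using $\zeta(P,\mathrm{cr}_i(P)) = \underline{\mathrm{cr}}_i$ gives, for $P \in \Delta_P$,
$$ w_i(P) = \mathcal{W}\big(\mathcal{Q}(P), \underline{\mathrm{cr}}_i\big) = c_i + \sum_{j=0}^{N-1} \mathcal{Q}_j(P)\, c_i^{j}. $$
In matrix form this reads $\mathbf{w}(P) - \mathbf{c} = V \mathcal{Q}(P)$, where $\mathbf{w} = (w_1,\ldots,w_N)$, $\mathbf{c} = (c_1,\ldots,c_N)$, and $V = (c_i^{j})_{i=1,\ldots,N;\, j=0,\ldots,N-1}$ is the Vandermonde matrix of the $c_i$. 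As the $c_i$ are distinct, $V$ is invertible, so on $\Delta_P$ one has the closed form $\mathcal{Q}(P) = V^{-1}(\mathbf{w}(P) - \mathbf{c})$.

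The right-hand side $V^{-1}(\mathbf{w}(P) - \mathbf{c})$ is holomorphic on all of $U$, since $\mathbf{w}$ is and $V,\mathbf{c}$ are constant. I would therefore \emph{define} the extension of $\mathcal{Q}$ to $U$ by this very formula; it agrees with the original $\mathcal{Q}$ on $\Delta_P$ and hence is the desired holomorphic extension. Finally, reversing the matrix computation shows $\mathcal{W}(\mathcal{Q}(P), \underline{\mathrm{cr}}_i) = c_i + (V\mathcal{Q}(P))_i = c_i + (\mathbf{w}(P) - \mathbf{c})_i = w_i(P) = W(P,\mathrm{cr}_i(P))$ for every $P \in U$ and $i = 1,\ldots,N$, which is precisely the stated identity.

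I expect the real work to lie not in the linear algebra but in the global bookkeeping that legitimizes it on all of $U$: one must check that semi-simplicity together with contractibility genuinely trivializes the critical-point cover, so that the $\mathrm{cr}_i$ are globally well-defined holomorphic sections with a labelling consistent with the $\underline{\mathrm{cr}}_i$ of Lemma \ref{crit_to_crit}. One should also note carefully that only the distinctness of the critical values of $f$ \emph{at the base point} $\underline{P}$ is used for the invertibility of $V$; the critical values $w_i(P)$ of $W_P$ may well collide at other $P \in U$, but this never enters the argument. Once these points are in place, both conclusions follow formally from the Vandermonde inversion.
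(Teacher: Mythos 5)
Your proposal is correct and follows essentially the same route as the paper: evaluate the unfolding relation \eqref{unfold_eq} at $z=\mathrm{cr}_i(P)$, invoke Lemma \ref{crit_to_crit} to get $\zeta(P,\mathrm{cr}_i(P))=\underline{\mathrm{cr}}_i$, and invert the Vandermonde matrix $\big(f^k(\underline{\mathrm{cr}}_i)\big)$ (nondegenerate since the critical values of $f$ are pairwise distinct) to express $\mathcal{Q}(P)$ in terms of the critical values $W(P,\mathrm{cr}_i(P))$, which are holomorphic on all of $U$. Your added remarks on the trivialization of the critical-point cover and on only needing distinctness of critical values at the base point are accurate but already supplied by the hypotheses and the choice of $\underline{P}$.
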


\begin{proof}
We have
$$ W(P,z) = \mathcal{W}(\mathcal{Q} (P),\zeta (P, z)) $$
for all $P \in \Delta_P$ and $z \in \mathcal{D}$.  Now take $z = \mathrm{cr}_i(P)$.  By Lemma \ref{crit_to_crit}, $\zeta(P,\mathrm{cr}_i(P)) = \underline{cr}_i$.  Thus for all $P \in \Delta_P$,
\begin{align*}
W(P,\mathrm{cr}_i(P)) &= \mathcal{W}(\mathcal{Q}(P), \underline{cr}_i) \\
&= f(\underline{cr}_i) + \sum_{k=0}^{N-1} f^k(\underline{cr}_i) \mathcal{Q}_k(P).
\end{align*}
Since $f$ has pairwise distinct critical values, the matrix $M = \big(f^k(\underline{cr}_i)\big)$ is invertible.  Thus the above equation determines $\mathcal{Q}_k(P)$, which extends to define $\mathcal{Q}$ on $U$.
\end{proof}

From now on, we take $\mathrm{Dom}_{\mathcal{Q}}$ to be a contractible open subset of $(\cpx^\times)^l$ with $\Delta_P$ and $P^{\text{PF}}(\underline{q})$ such that for all $P \in U$, $W_P$ is semi-simple and has pairwise distinct critical values.  By the above proposition $\mathcal{Q}$ extends to be defined on $U_{\mathcal{Q}}$.

\begin{lemma} \label{identity}
Let $A = (\cpx^N, \cdot)$ be an algebra where the multiplication is given by
$$(a_1, \ldots, a_N) \cdot (b_1, \ldots, b_N) = (a_1 b_1, \ldots, a_N b_N).$$
Denote by $\{e_i\}_{i=1}^N$ the standard basis of $\cpx^N$.  If $\Phi: A \to A$ is an isomorphism of Frobenius algebras, then $\Phi$ is a permutation matrix written in terms of the basis $\{e_i\}_{i=1}^N$, that is, there exists a permutation $\sigma$ on $\{1, \ldots, N\}$ such that $\Phi (e_i) = e_{\sigma(i)}$ for all $i = 1, \ldots, N$.
\end{lemma}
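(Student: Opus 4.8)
The plan is to give a coordinate-free characterization of the standard basis vectors $e_i$ in terms of the algebra structure alone, and then to observe that any algebra isomorphism must respect this characterization. Concretely, I would identify the $e_i$ as the \emph{primitive idempotents} of $A$, a notion which is manifestly preserved by any algebra isomorphism; it then follows immediately that $\Phi$ permutes the set $\{e_1, \ldots, e_N\}$, which is exactly the desired conclusion.

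First I would determine all idempotents of $A$. Since the multiplication is componentwise, an element $x = (x_1, \ldots, x_N)$ satisfies $x \cdot x = x$ if and only if $x_i^2 = x_i$ for every $i$, i.e. each $x_i \in \{0,1\}$. Thus the idempotents of $A$ are precisely the $\{0,1\}$-vectors. I would then call a nonzero idempotent $e$ \emph{primitive} if it cannot be written as $e = e' + e''$ with $e', e''$ nonzero idempotents satisfying $e' \cdot e'' = 0$. A short check shows the primitive idempotents are exactly the standard basis vectors: if $x$ is an idempotent with at least two nonzero entries, say $x_i = x_j = 1$ with $i \neq j$, then $x = e_i + (x - e_i)$ exhibits a nontrivial orthogonal splitting into nonzero idempotents, so $x$ is not primitive; conversely each $e_i$ admits no such splitting, since any orthogonal pair of $\{0,1\}$-vectors summing to $e_i$ must be $\{e_i, 0\}$.

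Next I would use that $\Phi$ is an algebra isomorphism. If $x^2 = x$ then $\Phi(x)^2 = \Phi(x^2) = \Phi(x)$, so $\Phi$ carries idempotents to idempotents, bijectively since $\Phi^{-1}$ does the same. Moreover $\Phi$ is linear and injective, hence preserves the relation $e' \cdot e'' = 0$ and sends nonzero elements to nonzero elements, so it carries orthogonal splittings to orthogonal splittings; being primitive is therefore an isomorphism-invariant property. Hence $\Phi$ restricts to a bijection of the set of primitive idempotents $\{e_1, \ldots, e_N\}$ onto itself, which defines a permutation $\sigma$ of $\{1, \ldots, N\}$ with $\Phi(e_i) = e_{\sigma(i)}$. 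Since $\{e_i\}$ is a basis and $\Phi$ is linear, $\Phi$ is represented by the corresponding permutation matrix, completing the proof.

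The only point requiring care — and the closest thing to an obstacle — is the coordinate-free identification of the $e_i$: an algebra isomorphism is a priori only required to respect the linear and multiplicative structure, so one cannot assume it preserves the coordinate decomposition, and the whole argument hinges on pinning down the $e_i$ by a purely algebraic property. Primitivity of idempotents does exactly this. I would also remark that the hypothesis that $\Phi$ preserves the Frobenius pairing is in fact superfluous here: semisimplicity of $A \cong \cpx^N$ already forces every $\cpx$-algebra automorphism to permute the minimal idempotents.
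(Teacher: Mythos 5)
Your proof is correct and takes essentially the same approach as the paper: both arguments exploit that idempotency forces $\Phi(e_i)$ to be a $\{0,1\}$-vector and that orthogonality then pins down the supports, the only difference being that you package the final step via primitivity of idempotents while the paper uses pairwise orthogonality of the $N$ images directly. Your closing remark that the Frobenius pairing is not needed is also consistent with the paper, whose proof likewise uses only the algebra structure.
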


\begin{proof}
Notice that $e_i \cdot e_i = e_i$ and $e_i \cdot e_j = 0$ for $i \not= j$.  Since $\Phi$ preserves the product structure, one has
\begin{align*}
\Phi(e_i) \cdot \Phi(e_i) &= \Phi(e_i); \\
\Phi(e_i) \cdot \Phi(e_j) &= 0 \text{ for } i \not= j.
\end{align*}
Let $\Phi(e_i) = (\Phi_i^1, \ldots, \Phi_i^N)$.  The first equation implies $(\Phi_i^j)^2 = \Phi_i^j$ for all $i, j = 1, \ldots, N$, which forces $\Phi_i^j$ to be either $0$ or $1$.  Then the second equation implies there exists a permutation $\sigma$ on $\{1, \ldots, N\}$ such that $\Phi (e_i) = e_{\sigma(i)}$ for all $i = 1, \ldots, N$.
\end{proof}

\begin{prop} \label{der_eq}
For all $q\in\Delta_q \subset \cpx^{\,l}$, $i=1,\ldots,l$ and $j=1,\ldots,N$, we have
$$ \dd{q_i} \mathcal{W}(\mathcal{Q}^{\text{LF}}(q), \underline{\mathrm{cr}}_j) = \dd{q_i} \mathcal{W}(\mathcal{Q}^{\text{PF}}(q), \underline{\mathrm{cr}}_j).$$
\end{prop}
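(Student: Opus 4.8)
The plan is to read both sides as $q_i$-derivatives of critical values of $W^{\textrm{LF}}_q$ and $W^{\textrm{PF}}_q$, to rewrite those derivatives through the two mirror isomorphisms, and to match the resulting data using Lemma \ref{identity}. First I would reduce to critical values by the envelope principle. By Proposition \ref{W_curlW} we have, for $\bullet\in\{\textrm{LF},\textrm{PF}\}$, that $\mathcal{W}(\mathcal{Q}^{\bullet}(q),\underline{\mathrm{cr}}_j)=W^{\bullet}_q(\mathrm{cr}^{\bullet}_j(q))$, where $\mathrm{cr}^{\bullet}_j(q)$ is the critical point of $W^{\bullet}_q=W_{P^{\bullet}(q)}$ sitting in the $j$-th sheet of the cover $\hat{\Delta}_P$. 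Since $\underline{\mathrm{cr}}_j$ is fixed, differentiating in $q_i$ and using that the $z$-derivatives of $W^{\bullet}_q$ vanish at a critical point makes the terms from the motion of the critical point drop out, so that
\[
\dd{q_i}\mathcal{W}(\mathcal{Q}^{\bullet}(q),\underline{\mathrm{cr}}_j)=\Big(\dd{q_i}W^{\bullet}\Big)(q,\mathrm{cr}^{\bullet}_j(q)).
\]
Hence $q_i\,\dd{q_i}\mathcal{W}(\mathcal{Q}^{\bullet}(q),\underline{\mathrm{cr}}_j)$ is the value at $\mathrm{cr}^{\bullet}_j(q)$ of the class $[q_i\,\dd{q_i}W^{\bullet}_q]\in\Jac(W^{\bullet}_q)$.

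Next I would bring in quantum cohomology. By Theorems \ref{Giv_thm2} and \ref{mir_thm_FOOO} the maps $\phi^{\textrm{LF}}_q,\phi^{\textrm{PF}}_q:QH^*(X,q)\to\Jac(W^{\textrm{LF}}_q),\Jac(W^{\textrm{PF}}_q)$ are isomorphisms of Frobenius algebras, both sending $\mathbf{p}_i$ to $[q_i\,\dd{q_i}W^{\bullet}_q]$. Thus the composite $\Theta_q:=\phi^{\textrm{PF}}_q\circ(\phi^{\textrm{LF}}_q)^{-1}$ is an isomorphism of Frobenius algebras carrying $[q_i\,\dd{q_i}W^{\textrm{LF}}_q]$ to $[q_i\,\dd{q_i}W^{\textrm{PF}}_q]$. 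Identifying each Jacobian ring with $\cpx^N$ by evaluation at its critical points and invoking Lemma \ref{identity}, $\Theta_q$ permutes the critical-point idempotents: there is $\sigma\in S_N$ with $\Theta_q(e^{\textrm{LF}}_j)=e^{\textrm{PF}}_{\sigma(j)}$, where $e^{\bullet}_j$ is the idempotent supported at $\mathrm{cr}^{\bullet}_j(q)$. Comparing the $\mathbf{p}_i$-components then gives $q_i\,\dd{q_i}\mathcal{W}(\mathcal{Q}^{\textrm{LF}}(q),\underline{\mathrm{cr}}_j)=q_i\,\dd{q_i}\mathcal{W}(\mathcal{Q}^{\textrm{PF}}(q),\underline{\mathrm{cr}}_{\sigma(j)})$.

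The remaining point — and the one I expect to be the main obstacle — is to prove that $\sigma$ is the identity. Both labellings are anchored to the common critical points $\underline{\mathrm{cr}}_j$ of $f$ through the same universal unfolding, which is strongly suggestive of $\sigma=\mathrm{id}$, but pinning this down at a fixed $q$ is circular (equality of the two characters at $\underline q$ is precisely the sought identity at $\underline q$). Instead I would argue by continuity: the primitive idempotents of $QH^*(X,q)$ vary holomorphically and stay distinct on the semi-simple locus, so $\sigma$ is locally constant, hence constant on the connected semi-simple locus containing $\Delta_q$. It then suffices to determine $\sigma$ near the large-radius limit $q\to 0$, where the two mirror maps have the \emph{same} leading behaviour, $P^{\textrm{LF}}_k(q)=q^{\alpha_{k+n}}(1+O(q))$ and $P^{\textrm{PF}}_k(q)=q^{\alpha_{k+n}}(1+O(q))$ (because $\delta_k\to0$ and $\hat{q}(q)\to q$), so that $W^{\textrm{LF}}_q$ and $W^{\textrm{PF}}_q$ together with their sheet-labelled critical points agree to leading order; tracking the idempotents along this degeneration should force $\sigma=\mathrm{id}$. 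The delicate part is that semi-simplicity degenerates in the limit, so this has to be run through a region in which $P^{\textrm{LF}}(q)$ and $P^{\textrm{PF}}(q)$ are joined by a short path avoiding the discriminant.

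Granting $\sigma=\mathrm{id}$, we obtain $q_i\,\dd{q_i}\mathcal{W}(\mathcal{Q}^{\textrm{LF}}(q),\underline{\mathrm{cr}}_j)=q_i\,\dd{q_i}\mathcal{W}(\mathcal{Q}^{\textrm{PF}}(q),\underline{\mathrm{cr}}_j)$ for all $i,j$ wherever $q_i\neq0$; both sides being holomorphic on $\Delta_q$, the identity theorem extends the equality to all of $\Delta_q$, which is exactly the assertion.
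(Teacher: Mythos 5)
Your proposal is correct and follows essentially the same route as the paper: reduce to critical values via Proposition \ref{W_curlW} and the envelope principle, compose the two mirror isomorphisms through $QH^*(X,q)$ to get a Frobenius algebra automorphism of $\cpx^N$, apply Lemma \ref{identity} to obtain a permutation $\sigma$, and rule out nontrivial $\sigma$ by matching the leading-order behaviour of $P^{\textrm{LF}}$ and $P^{\textrm{PF}}$ near $q=0$. The only difference is that the paper dispatches the $\sigma=\mathrm{id}$ step in a single sentence about leading order terms, whereas you rightly flag it as the delicate point and sketch the continuity argument in more detail.
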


\begin{proof}
Combining the toric mirror theorem and FOOO's mirror theorem, one has the isomorphism of algebras
$$ \Jac (W_q^{\text{LF}}) \cong QH^* (X, q) \cong \Jac (W_q^{\text{PF}}) $$
where the isomorphism $QH^* (X, \omega_q) \cong \Jac (W_q^{\text{LF}})$ is given by sending $v \in QH^* (X, \omega_q)$ to $\partial_v W_q^{\text{LF}}$, and the isomorphism $QH^* (X, \omega_q) \cong \Jac (W_q^{\text{PF}})$ is given by sending $v \in QH^* (X, \omega_q)$ to $\partial_v W_q^{\text{PF}}$.  (Here $q$ is a point in $H^*(X)$ and $v$ is a tangent vector at $q \in H^*(X)$, and so the directional derivatives make sense.)

Moreover, since $W_q^{\text{LF}} = W_{P^{\text{LF}}(q)}$ is semi-simple, one has $\Jac (W_q^{\text{LF}}) \cong \cpx^N$ as algebras,  where the isomorphisms are given by evaluations at critical points, that is, sending $f \in \Jac (W_q^{\text{LF}})$ to $\big(f(\mathrm{cr_1(P^{\text{LF}}(q))}), \ldots, f(\mathrm{cr_N(P^{\text{LF}}(q))})\big) \in \cpx^N$. Similarly $\Jac (W_q^{\text{PF}}) \cong \cpx^N$ by sending $f \in \Jac (W_q^{\text{PF}})$  to $\big(f(\mathrm{cr_j(P^{\text{PF}}(q))})\big)_{j=1}^N \in \cpx^N$.  Together with the above isomorphism $\Jac (W_q^{\text{LF}}) \cong \Jac (W_q^{\text{PF}})$, this gives an isomorphism $\cpx^N \to \cpx^N$ as algebras, which must be a permutation matrix $\sigma$ by Lemma \ref{identity}.  In particular, since $\Psi_i \in QH^* (X, \omega_q)$ is mapped to $\dd{q_i} W_q^{\text{LF}} \in \Jac (W_q^{\text{LF}})$ and to $\dd{q_i} W_q^{\text{PF}} \in \Jac (W_q^{\text{PF}})$, we have
$$\dd{q_i} W\big(P^{\text{LF}}(q), \mathrm{cr}_j(P^{\text{LF}}(q))\big) = \dd{q_i} W\big(P^{\text{PF}}(q), \mathrm{cr}_{\sigma(j)}(P^{\text{PF}}(q))\big).$$
Since the leading order terms of $P^{\text{LF}}$ and $P^{\text{PF}}$ are equal to each other, $\sigma$ must be the identity.  Using Proposition \ref{W_curlW}, we have
$$ \dd{q_i} \mathcal{W}(\mathcal{Q}^{\text{LF}}(q), \underline{\mathrm{cr}}_j) = \dd{q_i} \mathcal{W}(\mathcal{Q}^{\text{PF}}(q), \underline{\mathrm{cr}}_j).$$
\end{proof}

Proposition \ref{der_eq} gives
$$ \sum_{i=0}^{N-1} \frac{\partial\mathcal{Q}^{\text{LF}}_i}{\partial q} f^i (\underline{\mathrm{cr}}_j) = \sum_{i=0}^{N-1} \frac{\partial\mathcal{Q}^{\text{PF}}_i}{\partial q} f^i (\underline{\mathrm{cr}}_j). $$
Let us denote the critical values of $f$ by $\mathrm{cv}_j$, $j = 1, \ldots, N$, which are pairwise distinct.  Since the matrix $(\mathrm{cv}^i_j)_{i,j = 1, \ldots, N}$ is non-degenerate, the above equality implies that
$$\frac{\partial\mathcal{Q}^{\text{LF}}}{\partial q}
=\frac{\partial\mathcal{Q}^{\text{PF}}}{\partial q}
$$
and so $\mathcal{Q}^{\text{LF}} - \mathcal{Q}^{\text{PF}}$ is a constant.  As $q \to 0$, both $P^{\text{LF}}(q)$ and $P^{\text{PF}}(q)$ tend to $0$.  Thus $\mathcal{Q}^{\text{LF}}(q) - \mathcal{Q}^{\text{PF}}(q) = \mathcal{Q}(P^{\text{LF}}(q)) - \mathcal{Q}(P^{\text{PF}}(q))$ can only be $0$.  In particular when we put $q = \underline{q}$, $\mathcal{Q}(P^{\text{LF}}(\underline{q})) = \mathcal{Q}(P^{\text{PF}}(\underline{q})) = 0$, and so $W_{q_0}^{\text{PF}} = W_{q_0}^{\text{LF}}$.  Since $q_0$ is arbitrary, this finishes the proof of Theorem \ref{main_thm}.

\subsection{Application to computing open GW invariants}\label{sec:computing_openGW}

Now we deduce enumerative consequences of the equality $W^{\text{LF}} = W^{\text{PF}}$ (see Theorem \ref{main_thm}). In particular, we obtain a very powerful method to effectively compute {\em all} the open GW invariants of semi-Fano toric manifolds.

To begin with, recall that the equality gives
$$ \frac{1+\delta_k }{\prod_{j=1}^n (1+\delta_j)^{v_k^j}} q^{\alpha_k} = \hat{q}^{\alpha_k} (q) $$
for $k=n+1,\ldots,m$, where the left hand side is a coefficient of $W^{\text{LF}}$ and the right hand side is the corresponding coefficient of $W^{\text{PF}}$.  The toric mirror transform $\hat{q}(q)$ can be computed explicitly as follows \cite{givental98}.  Expand the $I$-function as
$$ I(\hat{q},z)=z \, \conste^{(\mathbf{p}_1 \log\hat{q}_1 + \ldots + \mathbf{p}_l \log\hat{q}_l) /z} \left( 1 + \frac{1}{z} \sum_{j=1}^l \mathbf{p}_j f_j(\hat{q}) + \mathrm{o}(z^{-1}) \right).$$
Then the inverse mirror map is given by
$$ q_i = \conste^{f_i(\hat{q})} \hat{q}_i $$
and the mirror map is obtained by inverting the above formal power series.  It is of the form $\hat{q}_i = \conste^{\phi_i(q)} q_i$ for $i=1,\ldots,l$.  Thus the above equality gives
\begin{equation} \label{to solve}
\prod_{j=1}^m (1+\delta_j)^{\pairing{D_j}{\alpha_k}} = \exp \left(\sum_{i=1}^l \pairing{\mathbf{p_i}}{\alpha_k} \phi_i(q) \right)
\end{equation}
for $k=n+1,\ldots,m$.  Now recall that
$$ \delta_j = \sum_{\alpha\not= 0} n_{\beta_j + \alpha} q^\alpha $$
is expressed in terms of open GW invariants $n_{\beta_j + \alpha}$.  Thus one would like to solve for $\delta_j$ as a series in $q$ in order to compute the open GW invariants.  Notice that we have $l$ equations ($k$ runs from $n+1$ to $m$), while we have $m$ unknown variables $\delta_k$'s! It turns out that at most $l-1$ of the $\delta_k$'s are non-zero.  To see this we need the following result in Gonz\'alez-Iritani \cite{G-I11}:

\begin{prop}[Proposition 4.3 of \cite{G-I11}] \label{gi=0}
Let $X$ be a compact semi-Fano toric manifold and denote its irreducible toric divisors by $D_1, \ldots, D_m$.  Define
\begin{equation}\label{gi}
g_0^{(i)}(\hat{q}_1, \ldots, \hat{q}_l) := \sum_{\substack{\pairing{-K_X}{d} = 0 \\ \pairing{D_i}{d} < 0 \\ \pairing{D_j}{d} \geq 0 \,\, \forall j\neq i }} \frac{(-1)^{\pairing{D_i}{d}}(-\pairing{D_i}{d}-1)!}{\prod_{j\not=i}\pairing{D_j}{d}!} \hat{q}^d.
\end{equation}
Then $g_0^{(i)} = 0$ if and only if the primitive generator $v_i$ corresponding to $D_i$ is a vertex of the fan polytope (the convex hull of primitive generators of rays in the fan) of $X$.
\end{prop}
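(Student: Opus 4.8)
The plan is to analyze the series $g_0^{(i)}$ by reducing the problem to a purely combinatorial statement about the lattice points $v_i$ and the fan polytope $\Delta$ (the convex hull of the primitive generators $v_1, \ldots, v_m$). The key observation is that each summand in \eqref{gi} is indexed by a curve class $d \in H_2(X)$ satisfying the three constraints $\pairing{-K_X}{d} = 0$, $\pairing{D_i}{d} < 0$, and $\pairing{D_j}{d} \geq 0$ for all $j \neq i$. Recalling that $-K_X = \sum_{j=1}^m D_j$, the first constraint reads $\sum_{j=1}^m \pairing{D_j}{d} = 0$, and combined with the sign constraints it says that $\pairing{D_i}{d} = -\sum_{j\neq i} \pairing{D_j}{d}$, so that the negativity at the single index $i$ exactly balances the nonnegativity elsewhere. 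The first thing I would do is translate the existence of such a class $d$ into a statement about whether $v_i$ lies in the convex hull of the remaining generators $\{v_j\}_{j\neq i}$.

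First I would establish the \emph{only if} direction (equivalently, its contrapositive): if $v_i$ is \emph{not} a vertex of $\Delta$, then $g_0^{(i)} \neq 0$. If $v_i$ is not a vertex, then either $v_i$ lies in the interior of $\Delta$ or on a face not as a vertex; in either case one can write $v_i$ as a nontrivial nonnegative rational combination of the other primitive generators, and after clearing denominators this produces an integral linear relation $\sum_{j\neq i} a_j v_j = c\, v_i$ with $a_j \geq 0$ and $c > 0$. Because the $v_j$ are the ray generators, such a relation corresponds precisely to a curve class $d$ with $\pairing{D_i}{d} = -c < 0$ and $\pairing{D_j}{d} = a_j \geq 0$ for $j \neq i$, and the balancing (the relation being homogeneous of the right degree, i.e. no constant term) forces $\pairing{-K_X}{d} = 0$. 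The point I must verify carefully is that among all such relations there is at least one whose associated summand is \emph{not cancelled}: since every term $\frac{(-1)^{\pairing{D_i}{d}}(-\pairing{D_i}{d}-1)!}{\prod_{j\neq i}\pairing{D_j}{d}!}$ has a \emph{definite sign} determined by the parity of $\pairing{D_i}{d}$, distinct monomials $\hat{q}^d$ cannot cancel one another, so the series is nonzero as soon as a single admissible $d$ exists.

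Next I would prove the \emph{if} direction: if $v_i$ is a vertex of $\Delta$, then $g_0^{(i)} = 0$. Here the geometric meaning of ``$v_i$ is a vertex'' is that there is a supporting linear functional $\ell \in M_\real$ with $\ell(v_i) > \ell(v_j)$ for all $j \neq i$ (a strict separation). I would argue that for any admissible $d$ one would need $\pairing{D_i}{d} < 0$ together with $\pairing{D_j}{d}\geq 0$, encoding a relation $c\, v_i = \sum_{j\neq i} a_j v_j$ with $c>0$, $a_j\geq 0$; pairing this relation against $\ell$ and using the strict separation gives $c\,\ell(v_i) = \sum_{j\neq i} a_j \ell(v_j) < (\sum_{j\neq i} a_j)\,\ell(v_i)$, but the degree-zero condition $\pairing{-K_X}{d}=0$ forces $\sum_{j\neq i}a_j = c$, yielding $c\,\ell(v_i) < c\,\ell(v_i)$, a contradiction. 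Hence no admissible $d$ exists and the sum is empty, so $g_0^{(i)} = 0$.

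The main obstacle I anticipate is making the dictionary between curve classes $d$ and integral relations among the $v_j$ completely precise, including the subtle point about which relations genuinely arise from \emph{effective} classes and how the degree-zero (Calabi–Yau) condition $\pairing{-K_X}{d}=0$ interacts with the semi-Fano hypothesis. One must check that the homogeneity of the linear relation (no affine constant) is exactly equivalent to $\sum_j \pairing{D_j}{d} = 0$, and that the vertex/non-vertex dichotomy of the fan polytope is faithfully captured. Since this is quoted as Proposition 4.3 of \cite{G-I11}, I would expect the cleanest route is to cite that reference directly; the sketch above is how I would reconstruct the argument from first principles if needed.
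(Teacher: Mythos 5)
The paper does not actually prove this statement: it is imported verbatim as Proposition 4.3 of \cite{G-I11}, so there is no internal proof to compare against. Your reconstruction is essentially the standard argument, and a fragment of it (the ``only if'' direction, run through the minimal face of the fan polytope containing a non-vertex $v_j$) already appears inside the paper's proof of Theorem \ref{rel_Seidel}. The dictionary you need between integral relations $\sum_j \lambda_j v_j = 0$ and classes $d \in H_2(X,\integer)$ with $\lambda_j = \pairing{D_j}{d}$ is exactly the exact sequence \eqref{pi2pi1} together with $\beta_j \cdot D_k = \delta_{jk}$; the separating-functional argument for the ``if'' direction is correct; and since $H_2(X,\integer)$ is torsion-free for a smooth compact toric variety, distinct $d$ give distinct monomials $\hat{q}^d$, so no cancellation can occur and the sign observation is not even needed. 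Also, as the index set in \eqref{gi} is literally the set of $d$ satisfying the three displayed conditions, the effectivity worry you raise does not arise for the statement as written here.

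The one soft spot is your justification of $\pairing{-K_X}{d} = 0$ in the ``only if'' direction. You attribute it to the relation $\sum_{j\neq i} a_j v_j = c\, v_i$ being ``homogeneous with no constant term,'' but every linear relation among the $v_j$ is homogeneous in that sense, while $\pairing{-K_X}{d} = \sum_{j\neq i} a_j - c$ vanishes only when the coefficients are normalized so that $\sum_{j\neq i} a_j = c$. That normalization is precisely the statement that $v_i$ lies in the \emph{convex hull} of the other generators rather than merely in the cone they span, and it is what makes the criterion about vertices of the fan polytope rather than about rays of the fan. The fix is immediate: when $v_i$ is not a vertex, choose a rational \emph{convex} combination $v_i = \sum_{j\neq i} t_j v_j$ with $t_j \geq 0$ and $\sum_{j\neq i} t_j = 1$ (possible since the solution set is a nonempty rational polytope), and clear denominators; then $\sum_{j\neq i} a_j = c$ holds by construction and the degree-zero condition follows. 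With that step made explicit the proof is complete and correct.
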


From the above proposition, we deduce that

\begin{corollary}\label{delta_i=0}
Assume the notations as in Proposition \ref{rewrite W^FOOO}.  At most $l-1$ of the $\delta_i$'s are non-zero, where $l = m-n$ is the dimension of $H^2(X)$.  Moreover, those divisors $D_i$ whose corresponding $\delta_i$ are non-zero are linearly independent in $H^2(X,\rat)$.
\end{corollary}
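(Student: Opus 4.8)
The plan is to reduce both assertions to the single implication
$$ v_i \text{ is a vertex of the fan polytope} \implies \delta_i = 0, $$
and then to dispatch the two conclusions by a vertex count and a relation-lifting argument in the divisor class group. Writing $S := \{i : v_i \text{ is not a vertex of the fan polytope}\}$, this implication says that $\{i : \delta_i \neq 0\} \subseteq S$; hence it suffices to show $|S| \leq l-1$ and that $\{D_i\}_{i \in S}$ is linearly independent in $H^2(X,\rat)$, since any sub-collection of an independent set is independent. Note that only this one direction of the correspondence is needed; the converse (that $\delta_i \neq 0$ when $v_i$ is not a vertex) plays no role.

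For the implication I would connect $\delta_i$ to the Gonz\'alez--Iritani series $g_0^{(i)}$. Recall that $\delta_i = \sum_{\alpha \neq 0} n_{\beta_i + \alpha} q^\alpha$, the sum running over nonzero effective classes $\alpha$ with $-K_X \cdot \alpha = 0$ represented by rational curves. The crux is to show that every class $\alpha$ for which $n_{\beta_i + \alpha} \neq 0$ is among the classes $d$ indexing the sum (\ref{gi}) that defines $g_0^{(i)}$, i.e.\ satisfies $\pairing{D_i}{\alpha} < 0$ and $\pairing{D_j}{\alpha} \geq 0$ for all $j \neq i$. Granting this, the vanishing of $g_0^{(i)}$ forces $\delta_i = 0$, and Proposition \ref{gi=0} identifies that vanishing with $v_i$ being a vertex. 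I would obtain the required statement about the admissible $\alpha$ from Theorem \ref{main_thm}, which expresses the coefficients of $W^{\textrm{LF}}$ through the toric mirror map, together with the description of that mirror map in terms of the functions $g_0^{(i)}$ in \cite{G-I11}. I expect this identification to be the main obstacle: it is the only step using genuine geometric input rather than combinatorics, and everything hinges on matching the classes occurring in $\delta_i$ with those occurring in $g_0^{(i)}$.

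Next I would bound $|S|$. Since $X$ is compact, its fan $\Sigma$ is complete, so the primitive generators $v_1, \ldots, v_m$ positively span $N_\real$ and $0$ lies in the interior of their convex hull; thus the fan polytope is full-dimensional in $N_\real \cong \real^n$. A full-dimensional polytope in $\real^n$ has at least $n+1$ vertices, and every vertex is one of the $v_i$. Hence the number of non-vertices satisfies $|S| \leq m - (n+1) = l - 1$, which gives the first assertion.

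Finally, for linear independence I would invoke the standard presentation of the class group of a smooth complete toric variety: the only linear relations among $D_1, \ldots, D_m$ in $H^2(X,\rat)$ are $\sum_{i=1}^m \pairing{u}{v_i} D_i = 0$ with $u \in M_\rat$. Suppose $\sum_{i \in S} \lambda_i D_i = 0$; extending by zero we get a relation over all $i$, so there is $u \in M_\rat$ with $\pairing{u}{v_i} = \lambda_i$ for $i \in S$ and $\pairing{u}{v_i} = 0$ for $i \notin S$. In particular $\pairing{u}{v_i} = 0$ for every vertex $v_i$; since the vertices of a full-dimensional polytope affinely span $N_\real$, the functional $\pairing{u}{\cdot}$ vanishes identically, whence $u = 0$ and all $\lambda_i = 0$. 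Therefore $\{D_i\}_{i \in S}$, and a fortiori the sub-collection with $\delta_i \neq 0$, is linearly independent, completing the proof.
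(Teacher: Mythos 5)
Your reduction of both claims to the implication ``$v_i$ a vertex of the fan polytope $\implies \delta_i = 0$'', your vertex count ($|S| \le m-(n+1) = l-1$), and your linear-independence argument via the presentation of $H^2(X,\rat)$ by the relations $\sum_i \pairing{u}{v_i} D_i = 0$ all coincide with the paper's proof. The gap is in the one step you yourself flag as the main obstacle, and it is a real gap, for two reasons. First, the statement you propose to prove --- that \emph{every} class $\alpha$ with $n_{\beta_i+\alpha}\neq 0$ satisfies $\pairing{D_i}{\alpha}<0$ and $\pairing{D_j}{\alpha}\geq 0$ for all $j\neq i$ --- is stronger than what is true or needed: when the sphere bubble representing $\alpha$ is a tree with components in several toric divisors, the total class $\alpha$ need not lie in the index set of $g_0^{(i)}$. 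The paper instead extracts a single irreducible component $C'$ of the bubble tree, namely the one meeting the holomorphic disk, and shows only that $d=[C']$ lies in that index set; one such $d$ already forces $g_0^{(i)}\neq 0$ (since distinct $d$ give distinct monomials $\hat q^d$ with nonzero coefficients), which is all that is required for the contrapositive $\delta_i\neq 0 \implies v_i$ not a vertex.

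Second, your proposed source for this step --- Theorem \ref{main_thm} plus the Gonz\'alez--Iritani description of the mirror map --- cannot deliver it. The equality $W^{\textrm{LF}}=W^{\textrm{PF}}$ only yields the product relations $\prod_i (1+\delta_i)^{D_i\cdot d}=\prod_i \exp(g_0^{(i)})^{D_i\cdot d}$, i.e.\ $l$ equations in $m$ unknowns; separating these into the individual identities $1+\delta_i=\exp(g_0^{(i)})$ is precisely what the corollary (via the vanishing and independence statements) is later used to accomplish, so your route is circular. It would also make the corollary conditional on the convergence hypothesis of Theorem \ref{main_thm}, whereas the paper's argument is unconditional and purely geometric: deformation invariance of $n_{\beta_i+\alpha}$ under moving the Lagrangian fiber forces $C'\subset D_i$, hence $D_i\cdot[C']<0$; and $D_j\cdot [C']<0$ for some $j\neq i$ would force $C'\subset D_i\cap D_j$, which is excluded because the Maslov index two disk in class $\beta_i$ meets the toric boundary at a single point of $D_i$ not lying in any deeper stratum. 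You need to supply this geometric argument (or an equivalent one) to close the proof.
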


\begin{proof}
By Proposition \ref{gi=0}, each vertex of the fan polytope corresponds to an index $i=1, \ldots, m$ such that $g_0^{(i)} = 0$.  A non-degenerate $n$-dimensional polytope has at least $(n+1)$ vertices, and hence at least $(n+1)$ of the $g_i$'s are zero.  Thus at most $m-(n+1) = l-1$ of them are non-zero.

Suppose $\delta_i \neq 0$.  Then there exists $\alpha \in H_2(X)$ represented by a rational curve with Chern number zero such that $n_{\beta_i + \alpha} \not= 0$.  The class $\alpha$ is represented by a tree $C$ of genus 0 curves in $X$. Let $C'$ be the irreducible component of $C$ which intersects with the disk representing $\beta_i$. Let $d=[C']\in H_2(X)$. Then the Chern number of $d$ is also zero since $X$ is semi-Fano. Furthermore, $D_i\cdot d<0$ because the invariance of $n_{\beta_i+\alpha}$ under deformation of the Lagrangian torus fiber $L$ implies that $C'$ is contained inside the toric divisor $D_i$. We claim that $D_j\cdot d\geq0$ for all $j\neq i$. When $n=2$, this is obvious. When $n\geq3$, $D_j\cdot d<0$ for some other $j\neq i$ implies that the curve $C'$ is contained inside the codimension two subvariety $D_i\cap D_j$. However, the intersection of $C'$ with the disk representing $\beta_i$ cannot be inside $D_i\cap D_j$ since $\beta_i$ is of Maslov index two. So we conclude that $D_j\cdot d\geq0$ for all $j\neq i$.
Thus $d = [C'] \in H_2(X)$ satisfies the properties that
\begin{align*}
\pairing{-K_X}{d} &= 0 \\
\pairing{D_i}{d} &< 0 \\
\pairing{D_j}{d} &\geq 0 \text{ for all } j\not=i.
\end{align*}

This contributes to a term of $g_0^{(i)}$, and hence $g_0^{(i)} \neq 0$ (distinct $d$ leads to distinct $\hat{q}^d$, and hence they do not cancel each other).  But there are at most $(l-1)$ non-zero $g_0^{(i)}$'s.  It follows that at most $(l-1)$ of the $\delta_i$'s are non-zero.

Let $I \subset \{1, \ldots, m\}$ be the collection of indices such that $\delta_i \not= 0$.  By the above argument, $\{v_i: i \in \{1,\ldots,m\} - I\}$ is the set of vertices of the fan polytope.  Suppose that $\{D_i: i \in I\}$ is linearly dependent in $H^2(X,\rat)$.  Then there exists $\nu \in M-\{0\}$ such that $\pairing{\nu}{v_i} = 0$ for all $i \not\in I$.  However this is impossible since $\{v_i: i \in \{1,\ldots,m\} - I\}$ spans the whole $N_\real$.  Thus $\{D_i: i \in I\}$ is linearly independent in $H^2(X,\rat)$.
\end{proof}

Let $I = \{i_1 < \ldots < i_K \} \subset \{1, \ldots, m\}$ be the collection of indices $i$ such that $v_i$ is not a vertex of the fan polytope.  The above corollary says that $\delta_i \not= 0$ only when $i \in I$.  Complete $\{D_i: i \in I\}$ into a basis of $H^2(X,\rat)$.  Denote its dual basis by $\{\tilde{\Psi}_1, \ldots, \tilde{\Psi}_l \} \subset H_2(X,\rat)$.  From Equation \eqref{to solve},
$$ 1 + \delta_{i_k} = \prod_{j=1}^m (1+\delta_j)^{\pairing{D_j}{\tilde{\Psi}_k}} = \exp \left(\sum_{i=1}^l \pairing{\mathbf{p_i}}{\tilde{\Psi}_k} \phi_i(q) \right) $$
for $k=1,\ldots, K$.  From this we obtain all the one-pointed open GW invariants of a Lagrangian toric fiber of $X$.

\subsection{Examples}
In this section we discuss our main Theorem \ref{main_thm} and its consequence on computation of open GW invariants in several examples.
\subsubsection{Semi-Fano toric surfaces} \label{surface}
In the paper \cite{chan-lau} by the first and the second authors, the open GW invariants and superpotenials $W^{\text{LF}}$ of semi-Fano toric surfaces have been computed.  The main result is the following:

\begin{theorem}[The instanton-corrected superpotential in surface case \cite{chan-lau}] \label{surface_thm}
Let $X$ be a compact semi-Fano toric surface. Let $\beta\in\pi_2(X,\mathbf{T})$ be a class of disks with Maslov index two bounded by a Lagrangian torus fiber $\mathbf{T}$. Then the genus 0 one-point open GW invariant $n_\beta$ is either one or zero according to whether $\beta$ is admissible or not.

As a consequence,
$$
W^{\text{LF}} = (1+\delta_1)z_1 + \ldots + (1+\delta_n)z_n + \sum_{k=n+1}^m (1+\delta_k) q^{\alpha_k} z^{v_k}
$$
where
$$
\delta_k = \sum_{\beta_k + \alpha \text{ is admissible}} q^\alpha
$$
\end{theorem}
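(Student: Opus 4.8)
The plan is to compute $n_\beta$ by a direct analysis of the moduli space $\moduli_1(\beta)$, using the special geometry of semi-Fano toric surfaces to show that it contributes $1$ exactly when $\beta$ is admissible and $0$ otherwise. The starting point is the structure theory of Maslov index two stable disks of Fukaya-Oh-Ohta-Ono \cite{FOOO1}: since $\mu(\beta)=2$, any stable disk representing $\beta$ consists of a single holomorphic disk component lying in one of the basic classes $\beta_{i_0}$ (each of Maslov index two, with $n_{\beta_{i_0}}=1$ by Cho-Oh \cite{cho06}) together with a connected tree of rational sphere components whose total class $\alpha=\beta-\beta_{i_0}$ satisfies $-K_X\cdot\alpha=0$. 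The first step is to locate these spheres: because $X$ is semi-Fano, every non-constant genus-zero stable map of Chern number zero is supported on the toric boundary $\bigcup_i D_i$, and on a surface the adjunction formula $-K_X\cdot D_i=D_i^2+2$ identifies the relevant irreducible components as exactly the boundary divisors with $D_i^2=-2$. Thus the sphere trees appearing are chains of boundary $(-2)$-curves.

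The second step is to match this geometric picture with the combinatorial notion of admissibility from \cite{chan-lau}. A class $\beta=\beta_{i_0}+\alpha$ should be admissible precisely when $\alpha$ is the class of a connected chain of boundary $(-2)$-curves that can be attached to the disk in class $\beta_{i_0}$ along the single boundary divisor $D_{i_0}$ that this disk meets. I would verify that the basic disk meets $D_{i_0}$ transversally at one point, and that the requirement of connectedness of the sphere tree together with the rigidity of the toric boundary curves forces any stable configuration of total class $\beta$ to be built by attaching such a chain at that intersection point. When no such chain exists the moduli space $\moduli_1(\beta)$ is empty, giving $n_\beta=0$; this disposes of the non-admissible case.

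For admissible $\beta$ the remaining task is to show the virtual count is exactly $1$. Since $\moduli_1(\beta)$ has virtual real dimension $n=2$ and $n_\beta=\int_{[\moduli_1(\beta)]}\textrm{ev}^*[\point]$ is the degree of the evaluation map $\textrm{ev}:\moduli_1(\beta)\to\torus$, it suffices to prove that a generic point of $\torus$ is the image of exactly one stable disk of class $\beta$, counted positively. I would argue that fixing the boundary marked point pins down the basic disk component by Cho-Oh's rigidity, and that the connectedness and rigidity of the attached chain of $(-2)$-curves then determine the entire configuration uniquely; hence $\textrm{ev}$ is an orientation-preserving degree-one map and $n_\beta=1$. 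The main obstacle is the accompanying transversality/obstruction computation: chains of $(-2)$-curves are exactly the curves prone to carrying a nontrivial obstruction bundle, so one must show that the linearized Cauchy-Riemann operator is surjective at these configurations, equivalently that the virtual fundamental class agrees with the naive count. This is the technical heart separating the semi-Fano case from the Fano case, where no such bubbling occurs. I expect the cleanest route is an induction on the length of the chain: blowing down a suitable $(-1)$-curve, or degenerating the surface torically, reduces an admissible configuration to one with a shorter chain whose invariant is already known, and a gluing analysis recovers each admissible chain with multiplicity exactly one. Finally, the displayed formula for $W^{\textrm{LF}}$ is immediate: substituting $n_\beta\in\{0,1\}$ into the expression of Proposition \ref{rewrite W^FOOO} yields $\delta_k=\sum_{\beta_k+\alpha\textrm{ is admissible}}q^\alpha$.
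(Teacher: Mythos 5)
First, a remark on the comparison itself: the paper does not prove Theorem \ref{surface_thm} here at all --- it is quoted from \cite{chan-lau} --- so the only meaningful comparison is with the argument of that reference. Your skeleton (the FOOO decomposition of a Maslov index two stable disk into one basic disk component in a class $\beta_{i_0}$ plus a tree of Chern-number-zero spheres, the localization of those spheres on the boundary $(-2)$-curves via adjunction, the observation that the tree must be attached at the unique intersection point of the basic disk with $D_{i_0}$, and the final substitution into Proposition \ref{rewrite W^FOOO}) is the correct starting point and agrees with the setup there.

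The genuine gap is in your treatment of the non-admissible case. You dispose of it by claiming that $\moduli_1(\beta)$ is empty when $\beta$ is not admissible, but this is false: admissibility is a condition on the multiplicity profile $(s_k)$, not merely on which chain of $(-2)$-curves is touched. For instance, on $\mathbf{F}_2$ the class $\beta_4 + 2[D_4]$, with $D_4$ the $(-2)$-section met by the basic disk $\beta_4$, is non-admissible (the last term of the multiplicity sequence is $2>1$), yet its moduli space is manifestly non-empty: attach a degree-two genus-zero cover of $D_4$ to the basic disk at its intersection point with $D_4$. The actual content of the theorem is that the virtual counts of all such multiple-cover configurations vanish while the admissible ones each contribute exactly $1$; this forces one to compute the Euler class of a genuinely non-trivial obstruction bundle over the moduli of sphere trees mapping with multiplicities into the chain of $(-2)$-curves (these are obstructed precisely because $H^1$ of the normal bundle of a $(-2)$-curve, and of its multiple covers, is non-zero). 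Your proposal acknowledges this obstruction issue for the admissible case but eliminates the non-admissible case by an emptiness claim that does not hold, so the dichotomy ``admissible $\Rightarrow 1$, non-admissible $\Rightarrow 0$'' is not established; the suggested induction by blow-downs and gluing is not developed enough to recover the multiplicity constraints $s_0\ge s_1\ge\cdots$, $|s_k-s_{k+1}|\le 1$, last term $\le 1$, which are exactly the fingerprint of that obstruction-bundle computation. The proof in \cite{chan-lau} (and the related identifications of $n_\beta$ with closed Gromov--Witten invariants as in \cite{Chan10,LLW10}) is organized around precisely this point.
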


The admissibility condition appeared in the above theorem is combinatoric in nature and is defined as follows:

\begin{defn}[Admissibility of disks in surface case.]
Let $X$ be a compact semi-Fano toric surface and denote a regular toric fiber by $\torus$.  A class $\beta\in\pi_2(X,\torus)$ is admissible iff $\beta=b+\sum_k s_kD_k$, where
\begin{enumerate}
\item $b\in\pi_2(X,\torus)$ is a basic disk class intersecting $D_0$ once;
\item $D_k$'s are toric divisors which form a chain of $(-2)$-curves.  In particular the summation in the above equation is finite;
\item Both $s_0\ge s_1\ge s_2\ge\cdots$ and $s_0\ge s_{-1}\ge s_{-2}\ge\cdots$ are nondecreasing integer sequences with $|s_k-s_{k+1}|=0$ or $1$ for each $k$, and the last term of each sequence is not greater than one.
\end{enumerate}
\end{defn}

From the above definition, it follows that the number of admissible disks is finite.  Thus $\delta_k = \sum_{\beta_k + \alpha \text{ is admissible}} q^\alpha$ is just a finite sum, and it follows that $W^{\text{LF}}$ converges.  By Theorem \ref{main_thm}, we have the

\begin{corollary}
Let $X$ be a compact semi-Fano toric surface.  Then
$$ W^{\text{PF}} = W^{\text{LF}}.$$
\end{corollary}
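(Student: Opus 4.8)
The plan is to deduce this corollary directly from the main theorem, Theorem \ref{main_thm}, which already establishes the equality $W^{\textrm{PF}}_q = W^{\textrm{LF}}_q$ under the single hypothesis that the coefficients of $W^{\textrm{LF}}$ (equivalently, the components of $P^{\textrm{LF}}$) converge and define an analytic map on an open polydisk $U_q$ centered at $q=0$. So the entire task reduces to verifying this convergence hypothesis in the surface case; once it is in place, the conclusion $W^{\textrm{PF}} = W^{\textrm{LF}}$ is immediate.

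The key input is Theorem \ref{surface_thm}. First I would invoke it to recall that for a compact semi-Fano toric surface the open Gromov-Witten invariants $n_\beta$ are either $0$ or $1$, and that each series $\delta_k = \sum_{\beta_k+\alpha\textrm{ admissible}} q^\alpha$ is indexed by admissible disk classes. The crucial combinatorial fact, already noted immediately after the admissibility definition, is that the number of admissible disks is \emph{finite}; consequently each $\delta_k$ is a finite sum, i.e. an honest polynomial in $q_1,\ldots,q_l$, and in particular an entire function. This is the step that genuinely distinguishes the surface case from the general situation, where $\delta_k$ may be an infinite series whose convergence is exactly the obstruction one must confront.

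Having established that every $\delta_k$ is a polynomial, I would next check that the coefficients
$$ P^{\textrm{LF}}_i = \frac{(1+\delta_{i+n})\, q^{\alpha_{i+n}}}{\prod_{j=1}^n (1+\delta_j)^{v_{i+n}^j}} $$
are analytic near $q=0$. Since $\delta_j = \sum_{\alpha\neq 0} n_{\beta_j+\alpha}\, q^\alpha$ has vanishing constant term, each factor $1+\delta_j$ equals $1$ at $q=0$, so the denominator $\prod_{j=1}^n (1+\delta_j)^{v_{i+n}^j}$ is a finite product of (possibly negative) powers of functions with constant term $1$, hence is itself nonzero in a neighborhood of the origin. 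Therefore $P^{\textrm{LF}}$ is a holomorphic map from some polydisk $U_q$ into $\cpx^l$, which is precisely the convergence hypothesis required by Theorem \ref{main_thm}.

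I expect no serious obstacle here: the content is entirely in Theorem \ref{surface_thm}, whose finiteness-of-admissible-disks statement does all the work. The only point requiring a moment of care is the non-vanishing of the denominator near $q=0$, and this follows formally from the fact that each $\delta_j$ vanishes at the origin. Assembling these observations, the convergence assumption of Theorem \ref{main_thm} holds, and applying that theorem yields $W^{\textrm{PF}} = W^{\textrm{LF}}$, completing the proof.
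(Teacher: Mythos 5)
Your proposal is correct and follows essentially the same route as the paper: the paper likewise notes that the finiteness of admissible disk classes (from Theorem \ref{surface_thm} and the admissibility definition) makes each $\delta_k$ a finite sum, hence $W^{\textrm{LF}}$ converges, and then applies Theorem \ref{main_thm}. Your additional remark on the non-vanishing of the denominator of $P^{\textrm{LF}}_i$ near $q=0$ is a minor point the paper leaves implicit, but it does not change the argument.
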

Thus \eqref{PF=LF} holds unconditionally in this case.

We remark that since the toric mirror transform is written down in terms of series in the K\"ahler parameters $q$, checking the above equality by brute force requires non-trivial techniques on handling infinite seires.

\subsubsection{Canonical line bundles of toric Fano manifolds} \label{can_line}
Another class of semi-Fano toric manifolds that we can check convergence of $W^{\text{LF}}$ is $X = \proj(K_S \oplus \mathcal{O}_S)$, where $S$ is a toric Fano manifold.

\begin{theorem} \label{K_S case}
Let $X = \proj(K_S \oplus \mathcal{O}_S)$, where $S$ is a compact toric Fano manifold.  Then $W^{\text{LF}}(q,z)$ converges and defines a holomorphic function in $\Delta_q \times (\cpx^\times)^n$, where $\Delta_q$ is an open neighborhood of $0 \in \cpx^l$.  Combining with Theorem \ref{main_thm}, $$W^{\text{LF}} = W^{\text{PF}}.$$
\end{theorem}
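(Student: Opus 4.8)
The statement asserts two things, but the second is immediate: once we know that $W^{\textrm{LF}}$ converges for $X = \proj(K_S \oplus \mathcal{O}_S)$, Theorem \ref{main_thm} applies verbatim to give $W^{\textrm{LF}} = W^{\textrm{PF}}$. So the entire content is the convergence assertion, and the plan is to establish it by a direct geometric computation of the open Gromov-Witten invariants entering the coefficients $\delta_k$ of Proposition \ref{rewrite W^FOOO}.

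\emph{Step 1: reduction to the zero section.} By Corollary \ref{delta_i=0} only finitely many $\delta_k$ are nonzero, so it suffices to show that each such series converges near $q=0$. I would first pin down which divisors can carry a nonzero $\delta_k$. Write $X$ as a $\proj^1$-bundle over $S$ with zero section $S_0 \cong S$, a tubular neighborhood of which is the total space of $K_S$, so $N_{S_0/X} = K_S$ and, by adjunction, $K_X|_{S_0} = K_{S_0} - N_{S_0/X} = K_S - K_S = 0$. Hence every curve contained in $S_0$ has vanishing Chern number. Following the argument in the proof of Corollary \ref{delta_i=0}, a nonzero contribution forces the bubbled rational curve $C'$, with $-K_X\cdot[C']=0$ and $D_k\cdot[C']<0$, to lie inside $D_k$; together with the Fano hypothesis on $S$ (which makes all other toric divisors meet such curves nonnegatively) this singles out $D_k = S_0$, and the contributing classes are exactly the pushforwards of effective classes $\bar\alpha \in H_2(S)$. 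Thus convergence reduces to that of the single series $\delta_{i_0} = \sum_{\bar\alpha \neq 0} n_{\beta_{i_0}+\bar\alpha}\, q^{\bar\alpha}$, where $\beta_{i_0}$ is the basic disk meeting $S_0$.

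\emph{Step 2: computing the open invariants.} The moduli space $\moduli_1(\beta_{i_0}+\bar\alpha)$ parametrizes a Maslov-two disk in class $\beta_{i_0}$ together with a genus-zero stable map in class $\bar\alpha$ attached at the point where the disk meets $S_0$, the sphere components lying in $S_0 \cong S$. The plan is to exploit the product-like geometry of the neighborhood $K_S$ of $S_0$ to factor the virtual count: the disk factor is rigid by Cho-Oh \cite{cho06}, while the sphere factor is governed by the genus-zero Gromov-Witten theory of $S$, with the normal direction $K_S$ contributing an obstruction-bundle Euler class. I would show that $n_{\beta_{i_0}+\bar\alpha}$ equals a one-point genus-zero closed invariant of $X$ supported on $S_0$ (equivalently a local invariant of $K_S$), so that $1+\delta_{i_0}(q)$ is an explicit generating function built from $\overline{\moduli}_{0,1}(S,\bar\alpha)$ and the Euler class of the obstruction bundle over it.

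\emph{Step 3: convergence, and the main obstacle.} Because $S$ is toric Fano, the resulting generating function is a hypergeometric series whose coefficients are ratios of factorials, of exactly the type appearing in Proposition \ref{gi=0}; such series have strictly positive radius of convergence. Equivalently, one identifies the series with the $(-K_X)$-degree-zero part of the analytic $I$-function of $X$, convergent near $q=0$ by Theorem \ref{conv} and the analyticity of $I$. Either way each $\delta_k$ converges, so $W^{\textrm{LF}}$ is holomorphic on $\Delta_q \times (\cpx^\times)^n$ and Theorem \ref{main_thm} yields $W^{\textrm{LF}} = W^{\textrm{PF}}$. The hard part is Step 2: rigorously identifying the open invariant with the stated closed/local invariant requires controlling transversality and the obstruction bundle in the disk moduli and justifying the factorization through $\overline{\moduli}_{0,1}(S,\bar\alpha)$, and then confirming that the resulting invariants assemble into a genuinely convergent hypergeometric series rather than a merely formal one. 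I expect degeneration to the normal cone of $S_0$, together with the Fano hypothesis keeping the curve moduli of $S$ well-behaved, to be the essential tools.
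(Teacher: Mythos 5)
Your overall strategy is the same as the paper's: reduce to the single series $\delta_0$ attached to the zero section, convert the open invariants into closed Gromov--Witten invariants, and read off convergence from the analyticity of the genus-zero theory via the mirror theorem. But your Step 2 --- the open/closed comparison --- is exactly where your argument stops being a proof: you propose to establish the identity by degeneration to the normal cone and a factorization of the obstruction theory through $\overline{\moduli}_{0,1}(S,\bar\alpha)$, and you correctly flag this as the hard part, but you do not carry it out. The paper does not carry it out either; it invokes an already-established theorem of the first author (Theorem \ref{open_closed}, from \cite{Chan10}), which states $n_{\beta_0+\alpha}=\langle \point\rangle_{0,1,h+\alpha}$. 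Note the precise form: the curve class on the closed side is $h+\alpha$ where $h$ is the \emph{fiber} class, and the insertion is the point class of $X$ --- it is not an invariant ``supported on $S_0$'' nor literally a local invariant of $K_S$, so your identification of the target closed invariant is slightly off even at the level of the statement you would need to prove.

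Your Step 3 also needs repair in route (a): the closed invariants $\langle\point\rangle_{0,1,h+\alpha}$ are coefficients of the $J$-function, not of the $I$-function, so they are \emph{not} directly ratios of factorials; the hypergeometric structure sits on the $I$-side and is transported to the $J$-side only through the nontrivial mirror map. The paper's actual argument is your route (b), made precise: using the string equation and a dimension count it identifies $q^h\delta_0$ with the fundamental-class component of the $1/z$-coefficient of $J$, and then concludes convergence because $J=I\circ\hat{q}$ with $I$ analytic and the mirror map convergent (Theorem \ref{conv}). Your Step 1 reduction (adjunction gives $K_X|_{S_0}=0$, the Fano hypothesis on $S$ forces all other $\delta_k$ to vanish) is correct and consistent with Corollary \ref{delta_i=0}. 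So the verdict is: right skeleton, but the load-bearing open/closed identity is asserted rather than proved, and in this paper it is supplied by citation rather than by the degeneration argument you sketch.
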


To prove this, we will use the following result proved by the first author \cite{Chan10} on equating open GW invariants with some closed GW invariants, and computation of toric mirror transform.

\begin{theorem}[Open and closed GW invariants \cite{Chan10}] \label{open_closed}

Let $\torus$ be a regular toric fiber of $X = \proj(K_S \oplus \mathcal{O}_S)$, where $S$ is a toric Fano manifold, and let $\beta_0 \in \pi_2(X,\torus)$ be the basic disk class which intersects the zero section once.  Then for every $\alpha \in H_2(X)$ represented by a rational curve of Chern number zero,
$$ n_{\beta_0+\alpha} = \langle \point \rangle_{0,1,h+\alpha} $$
where $h \in H_2(X)$ is the fiber class and $\point \in H^n(X)$ is the Poincar\'e dual of a point in $X$.
\end{theorem}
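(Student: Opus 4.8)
The plan is to prove the identity by reducing both the open invariant $n_{\beta_0+\alpha}$ and the closed invariant $\langle \point\rangle_{0,1,h+\alpha}$ to one and the same twisted genus-zero Gromov-Witten invariant of the base $S$. Write $\pi\colon X=\proj(K_S\oplus\mathcal{O}_S)\to S$ for the $\proj^1$-bundle projection, and let $S_0,S_\infty\subset X$ be the two toric sections. First I would record the geometry that makes the reduction possible: by adjunction the normal bundle of $S_0$ is $K_S$, so that $-K_X$ restricts to the trivial bundle on $S_0\cong S$, whereas $-K_X|_{S_\infty}=-2K_S$ is ample. Hence any rational curve of Chern number zero must lie inside $S_0$ (this is exactly the mechanism already used in Corollary \ref{delta_i=0}), so $\alpha\in H_2(S_0)=H_2(S)$ is an effective class. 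The basic disk $\beta_0$ is the lower half of a fiber, running from its boundary circle on $\torus$ down to the point where the fiber meets $S_0$, and a fiber $F\cong\proj^1$ has class $h$. Capping $\beta_0$ with the upper half of the same fiber turns a configuration of class $\beta_0+\alpha$ into one of class $h+\alpha$; this capping is canonical because the boundary circle of $\beta_0$ determines a unique fiber.

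Next I would describe both moduli problems explicitly. Using the Cho-Oh classification of holomorphic disks \cite{cho06} together with the above, a stable disk representing $\beta_0+\alpha$ consists of the (rigid, index-two) basic disk $\beta_0$ with a genus-zero stable map $v\colon C\to S_0$ of class $\alpha$ attached at the disk centre, which maps to the point $F\cap S_0$. Dually, a stable map of class $h+\alpha$ through a generic point $p\in X$ consists of the unique fiber through $p$ together with a genus-zero map of class $\alpha$ into $S_0$ attached at $F\cap S_0$. In both cases the geometry is governed by $\overline{\moduli}_{0,1}(S,\alpha)$, the attaching point recording an evaluation into $S$. I would check that the two dimension counts agree with the claimed invariants: $\mu(\beta_0+\alpha)=2$ makes $\moduli_1(\beta_0+\alpha)$ of real dimension $n$, and $\langle c_1(X),h+\alpha\rangle=2$ makes $\overline{\moduli}_{0,1}(X,h+\alpha)$ of complex dimension $n$, so that the respective point insertions both produce numbers.

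The core of the argument is the matching of the two virtual classes. On the closed side, the fiber contributes the elementary count $\langle\point\rangle_{0,1,h}=1$ and the point constraint on $X$ rigidifies the fiber and forces its intersection with $S_0$ to a generic point of $S$; the deformations of the $\alpha$-curve normal to $S_0$ are obstructed by $H^1(C,v^*N_{S_0})=H^1(C,v^*K_S)$. Thus $\langle\point\rangle_{0,1,h+\alpha}=\int_{[\overline{\moduli}_{0,1}(S,\alpha)]^{\mathrm{vir}}} e(\mathrm{Obs})\cup\mathrm{ev}^*[\point_S]$, where $\mathrm{Obs}=R^1\pi_*\,\mathrm{ev}^*K_S$ is the local obstruction bundle of $(S,K_S)$; this is the standard identification of fiber-plus-base Gromov-Witten invariants of the $\proj^1$-bundle with local invariants of $S$. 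On the open side I would run FOOO's obstruction theory \cite{FOOO1} for $\moduli_1(\beta_0+\alpha)$: the basic-disk factor carries $n_{\beta_0}=1$ and, under the evaluation at the boundary marked point, the constraint $[\point]\in H^n(\torus)$ pushes forward to the point class of $S$ at the attaching point; the remaining deformations are precisely those of $v$ in $X$, whose obstruction is again $H^1(C,v^*K_S)$. This would give $n_{\beta_0+\alpha}=\int_{[\overline{\moduli}_{0,1}(S,\alpha)]^{\mathrm{vir}}} e(\mathrm{Obs})\cup\mathrm{ev}^*[\point_S]$, the same local invariant, and the theorem follows.

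The main obstacle is this last matching at the level of virtual fundamental classes. Two points need genuine care. First, one must show that FOOO's (real, Kuranishi) obstruction theory for the disk moduli reduces, after capping, to the same complex obstruction bundle $\mathrm{Obs}=R^1\pi_*\,\mathrm{ev}^*K_S$ that governs the local invariant, and that the induced orientations agree with the complex ones, so that no sign discrepancy appears. Second, one must control the gluing at the node where the $\alpha$-curve meets the fiber (respectively the disk) and verify that the degree-one factors --- $n_{\beta_0}=1$, the fiber count $\langle\point\rangle_{0,1,h}=1$, and the pushforward of $[\point_\torus]$ to $[\point_S]$ --- contribute no extra multiplicity. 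A convenient way to organize both checks is to use the fiberwise $\cpx^\times$-action on $X$: applying virtual localization on the closed side expresses $\langle\point\rangle_{0,1,h+\alpha}$ through the $S_0$-fixed locus, whose contribution is manifestly the $K_S$-twisted class above, and comparing this fixed-point contribution with the disk count is where the delicate orientation bookkeeping is confined.
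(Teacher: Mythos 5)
Your geometric setup is right --- rational curves of Chern number zero do lie in $S_0$, and on both sides the relevant configurations consist of a half-fiber (resp.\ a full fiber) together with an $\alpha$-tree in $S_0$ attached at the unique point where the fiber meets $S_0$ --- but the identity you call ``the standard identification of fiber-plus-base Gromov--Witten invariants with local invariants'' is false, and this breaks both halves of your argument. Your proposed common value
$$\int_{[\overline{\moduli}_{0,1}(S,\alpha)]^{\mathrm{vir}}} e(\mathrm{Obs})\cup\mathrm{ev}^*[\point_S],\qquad \mathrm{Obs}=R^1\pi_*\mathrm{ev}^*K_S,$$
is dimensionally inconsistent: for a genus-zero stable map $v\colon C\to S$ of class $\alpha\neq 0$ one has $H^0(C,v^*K_S)=0$, so $\mathrm{rank}\,\mathrm{Obs}=-K_S\cdot\alpha-1$ by Riemann--Roch; $\mathrm{ev}^*[\point_S]$ has complex degree $\dim_\cpx S=n-1$; and $\mathrm{vdim}_\cpx\overline{\moduli}_{0,1}(S,\alpha)=(n-1)-2+\langle c_1(S),\alpha\rangle$. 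Hence the integrand exceeds the virtual dimension by exactly one and the integral vanishes identically, whereas (for example) for $S=\proj^2$ and $\alpha$ the line class the number both sides of the theorem equal is $-2$. What your sketch drops is the deformation smoothing the node joining the $\alpha$-tree to the fiber (resp.\ to the basic disk). If one actually runs the $\cpx^\times$-localization you invoke, the fixed locus is $\overline{\moduli}_{0,1}(S,\alpha)$ (the tree with its attaching point), and after the fiber factors cancel against the equivariant lift of the point class its contribution is
$$\int_{[\overline{\moduli}_{0,1}(S,\alpha)]^{\mathrm{vir}}}\mathrm{ev}^*[\point_S]\,\frac{\sum_{i}\lambda^{\,r-i}c_i(\mathrm{Obs})}{\lambda-\psi},\qquad r=\mathrm{rank}\,\mathrm{Obs},$$
where $\lambda-\psi$ is the equivariant Euler class of the node-smoothing line; its nonequivariant limit is $\int\mathrm{ev}^*[\point_S]\sum_{i+j=r-1}c_i(\mathrm{Obs})\,\psi^{\,j}$, a $\psi$-corrected (descendant) twisted invariant, not the naive one. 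As a check, for $S=\proj^2$, $\alpha=L$ this gives $\int_{\mathrm{ev}^{-1}(\point)}\left(\psi+c_1(\mathrm{Obs})\right)=1-3=-2$, as it must. The same node-smoothing correction is missing from your open-side reduction, so both of your intermediate identities are wrong as stated, even though their errors are ``parallel''.

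Second, the step you explicitly defer --- that FOOO's Kuranishi structure on $\moduli_1(\beta_0+\alpha)$ reproduces the same expression, with orientations matching the complex ones --- is not ``delicate bookkeeping''; it is the entire content of the theorem, and your proposal offers no argument for it. It is also not how the result is established in the source this paper cites: the proof in \cite{Chan10} never evaluates either invariant. Putting the point constraint at a point $p\in\torus$ itself, one caps the basic disk with the upper half of the fiber through $p$ to obtain a homeomorphism between the two evaluation fibers $\mathrm{ev}^{-1}(p)\subset\moduli_1(\beta_0+\alpha)$ and $\mathrm{ev}^{-1}(p)\subset\overline{\moduli}_{0,1}(X,h+\alpha)$, and then shows that this homeomorphism is covered by an isomorphism of Kuranishi structures: on both sides the obstruction spaces are the $H^1$'s of the identical sphere bubbles in $S_0$, the disk and fiber components being regular. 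The two virtual counts then agree without either being computed, which is precisely why that proof never meets the $\psi$-class subtlety that your two-sided evaluation strategy must confront; repairing your route would require redoing both evaluations with the node-smoothing corrections included, and on the open side this amounts to an open-string analogue of virtual localization, which is considerably harder than the direct comparison.
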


\begin{proof}[Proof of Theorem \ref{K_S case}]
Denote the generators of rays in the fan of $S$ by $u_1, \ldots, u_m \in N$, and without loss of generality assume that $u_1, \ldots, u_n$ generates a cone in the fan.  Then the generators of rays in the fan of $X$ is $v_0 = (0,1), v_\infty = (0,-1), v_i = (u_i, 1) \in N \times \integer$.  $W^{\text{LF}}$ in this case is given by
$$W^{\text{LF}} = (1+\delta_0) z_0 + q_0 z_0^{-1} + z_1 + \ldots + z_n + \sum_{k=n+1}^m  q^{\alpha_k} z^{v_k}$$
where $q_0 = \conste^{-A}$, $A$ is the symplectic area of the fiber class; $q_1, \ldots, q_l$ are the K\"ahler parameters of $S$ corresponding to a choice of basis $\{\mathbf{p}_1, \ldots, \mathbf{p}_l\}$ of $H^2(S)$, and $\alpha_k$ is defined by Equation \eqref{alpha}.  It suffices to prove that
$$ \delta_0 = \sum_{\alpha \in H_2^\mathrm{eff}(X)\setminus\{0\}} q^{\alpha} n_{\beta_0 + \alpha} $$
converges.  By Theorem \ref{open_closed},
$$ \delta_0 = \sum_{\alpha \in H_2^\mathrm{eff}(X)\setminus\{0\}} q^{\alpha} \langle \point \rangle_{0,1,h+\alpha}. $$

The right hand side in the above theorem are closed GW invariants, which can be found in the coefficients of $J$-function as follows.  From Equation \eqref{J-function}, one has
\begin{align*}
J(q,z) &= z \, \conste^{(\mathbf{p}_1 \log q_1 + \ldots + \mathbf{p}_l \log q_l) /z} \left(1+\sum_{\substack{\alpha, \\ d \in H_2^\mathrm{eff}(X)\setminus\{0\}}} q^d \Big\langle1,\frac{\phi_\alpha}{z-\psi}\Big\rangle_{0,2,d}\phi^\alpha\right) \\
&= z \, \conste^{(\mathbf{p}_1 \log q_1 + \ldots + \mathbf{p}_l \log q_l) /z} \left(1+\sum_{\substack{\alpha, \\ d \in H_2^\mathrm{eff}(X)\setminus\{0\}}} \frac{q^d}{z} \sum_{k\geq 0} \left(\langle 1,\phi_\alpha \psi^k \rangle_{0,2,d} \frac{\phi^\alpha}{z^k}\right) \right) \\
&= z \, \conste^{(\mathbf{p}_1 \log q_1 + \ldots + \mathbf{p}_l \log q_l) /z} \left(1+\sum_{\substack{\alpha, \\ d \in H_2^\mathrm{eff}(X)\setminus\{0\}}} \frac{q^d}{z} \sum_{k\geq 1} \left(\langle \phi_\alpha \psi^{k-1} \rangle_{0,1,d} \frac{\phi^\alpha}{z^k}\right) \right)
\end{align*}
where the last equality follows from the string equation.  Each coefficient of $J$ takes value in $H^*(X)$, which can be written as linear combinations of $\phi^\alpha$'s.  By taking the term corresponding to $k=1$ in the above summation and consider the component $\phi_\alpha = \point$ (so that $\phi^\alpha = 1$, the fundamental class), It follows that the component of fundamental class in the $1/z$-term of $J$ is
$$ \sum_{d \in H_2^\mathrm{eff}(X)\setminus\{0\}} q^d \langle \point \rangle_{0,1,d}. $$
By dimension counting, $\langle \point \rangle_{0,1,d} \not= 0$ only when $\dim \moduli_{0,1} (d) = c_1 (d) + n + 1 - 3 = n$, that is, $c_1(d) = 2$, and this holds if and only if $d=h+\alpha$ for some $\alpha$ represented by rational curves with $c_1(\alpha) = 0$.  Thus the above expression is equal to
$$ q^h \sum_{\alpha \in H_2^\mathrm{eff}(X)\setminus\{0\}} q^\alpha \langle \point \rangle_{0,1,h+\alpha} = q^h \delta_0.$$

Since the coefficients of $J$ are convergent (see Theorem \ref{conv} and the remark below there), in particular $\delta_0$ is convergent. Hence the result follows from our main theorem.
\end{proof}

\subsubsection*{The Hirzebruch surface $\mathbf{F}_2$.}
As the simplest non-trivial example\footnote{This example was discussed in a meeting at MIT in June 2009 participated by M. Abouzaid, T. Coates, H. Iritani, and H.-H. T.}, let us consider the Hirzebruch surface $X=\mathbf{F}_2$.  This was the first example that we verified $W^{\mathrm{PF}} = W^{\mathrm{LF}}$ by direct computations.

Its fan consists of four rays which are generated by $v_1 = (1,0), v_2 = (0,1), v_3 = (-1,-2), v_4 = (0,-1)$ respectively.  Let $D_i$ denote the corresponding irreducible toric divisors and $\beta_i$ denote the corresponding basic disk classes.  Take $\Psi_1 = D_4$, $\Psi_2 = D_1$ to be the basis of $H_2(X)$, and denote the corresponding K\"ahler parameters by $q_1$ and $q_2$.

The superpotential via PDE approach is
$$ W^{\mathrm{PF}} = z_1 + z_2 + \check{q}_1(q) \check{q}_2^2(q) z_1^{-1} z_2^{-2} + \check{q}_2(q) z_2^{-1}$$
where $(\check{q}_1(q), \check{q}_2(q))$ is the inverse of toric mirror map.  Let
$$h(x) = \sum_{k > 0} x^k \frac{(-1)^{2k-1}(2k-1)!}{(k!)^2}.$$
Then the toric mirror map is given by
\begin{align*}
q_1 &= \check{q}_1 \exp (-2 h(\check{q}_1)); \\
q_2 &= \check{q}_2 \exp h(\check{q}_1).
\end{align*}
By inverting the first equality, one has
$$\exp\left(-h(\check{q}_1(q))\right) = 1+q_1$$
and so
\begin{align*}
W^{\mathrm{PF}} &= z_1 + z_2 + q_1 q_2^2 z_1^{-1} z_2^{-2} + q_2 \exp (-h(\check{q}_1(q))) z_2^{-1} \\
&= z_1 + z_2 + q_1 q_2^2 z_1^{-1} z_2^{-2} + q_2  z_2^{-1} (1+q_1).
\end{align*}
On the other hand,
$$W^{\mathrm{LF}} = z_1 + z_2 + q_1 q_2^2 z_1^{-1} z_2^{-2} + q_2 z_2^{-1} (1+\delta(q))$$
where
$$\delta(q) = \sum_{k>0} n_{\beta_4 + k \Psi_1} q_1^k.$$
It was shown by Auroux \cite{auroux09}, Fukaya-Oh-Ohta-Ono \cite{FOOO10} and Chan-Lau \cite{chan-lau} that
$n_{\beta_4 + k \Psi_1} = 1$ when $k=0,1$ and zero otherwise.  Thus
$$ W^{\mathrm{LF}} = z_1 + z_2 + q_1 q_2^2 z_1^{-1} z_2^{-2} + q_2  z_2^{-1} (1+q_1).$$
We see that $W^{\mathrm{PF}} = W^{\mathrm{LF}}$.

\subsubsection{A further example}\label{sec:further_example}

To show the enumerative power of \eqref{PF=LF}, we compute an example whose open GW invariants are more complicated than the canonical line bundle of a toric Fano manifold, in the sense that the bubbles which contribute to the open GW invariants are supported by more than one irreducible toric divisors.

The toric data is as follows.  The primitive generators of rays in the fan are
\begin{align*}
&v_1 = (0,0,1), v_2 = (1,0,0), v_3 = (2,0,-1), v_4 = (1,0,-1), \\
&v_5 = (0,1,0), v_6 = (-1,-1,3), v_7 = (0,0,-1).
\end{align*}
and its moment map image (with respect to a toric K\"ahler form) is shown in Figure \ref{blowup_KP2}.  Let $X$ denote this toric manifold.

\begin{figure}[htp]
\begin{center}
\includegraphics[scale=0.8]{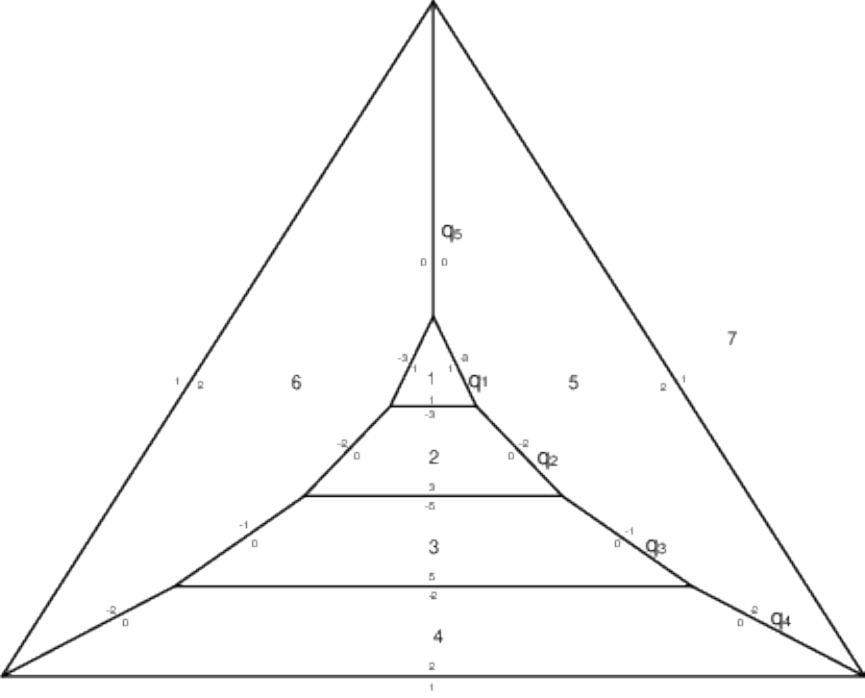}
\end{center}
\caption{A toric semi-Fano manifold which can be obtained by blowing up $\proj(K_{\proj^2}\oplus\mathscr{O}_{\proj^2})$ along lines twice.}
\label{blowup_KP2}
\end{figure}

Each facet is labelled by a number $i = 1, \ldots, 7$ such that its primitive normal vector is given by $v_i$, and the corresponding irreducible toric divisor is denoted by $D_i$.  The numbers beside the edges record the self-intersection numbers of lines inside the corresponding toric surfaces.  We take a basis $\{\Psi_1, \ldots, \Psi_4\} \subset H_2 (X)$, where $\Psi_i = D_i \cap D_5$ for $i = 1, \ldots, 4$.  The corresponding K\"ahler parameters are denoted by $q_j$ for $j = 1, \ldots, 4$ as shown in Figure \ref{blowup_KP2}, and the complex parameters in the mirror are denoted by $\check{q}_j$ for $j = 1, \ldots, 4$.  Let $q_5 = q_2 q_3^2 q_4$ and $\check{q}_5 = \check{q}_2 \check{q}_3^2 \check{q}_4$.

The superpotential from the PDE approach is
$$ W^{\mathrm{PF}} = \check{q}_5 z_3 + \check{q}_1 \check{q}_5^3 z_1 + \check{q}_1^2 \check{q}_2 \check{q}_5^5 z_1^2 z_3^{-1} + \check{q}_3 \check{q}_2 \check{q}_1 \check{q}_5^2 z_1 z_3^{-1} + z_2 + z_1^{-1} z_2^{-1} z_3^3 + z_3^{-1} $$
where $\check{q}(q)$ is the inverse of the toric mirror map.  Let
\begin{align*}
f(x, y) &= \sum_{\substack{k_1 \geq 2 k_2 \geq 0 \\ (k_1,k_2) \not= (0,0) }} x^{k_1}y^{k_2} \frac{(-1)^{3k_1 - k_2 - 1} (3k_1 - k_2 - 1)!}{(k_1!)^2 k_2! (k_1 - 2k_2)!};\\
g(x, y) &= \sum_{\substack{k_2 \geq 3k_1 \geq 0 \\ (k_1,k_2) \not= (0,0) }} x^{k_1}y^{k_2} \frac{(-1)^{2k_2 - k_1 - 1} (2k_2 - k_1 - 1)!}{(k_1!)^2 k_2! (k_2 - 3k_1)!};\\
h(x) &= \sum_{k > 0} x^k \frac{(-1)^{2k-1}(2k-1)!}{(k!)^2}.
\end{align*}
Then the toric mirror map is given by
\begin{align*}
q_1 &= \check{q}_1 \exp (-3f(\check{q}_1,\check{q}_2) + g(\check{q}_1,\check{q}_2)); \\
q_2 &= \check{q}_2 \exp (f(\check{q}_1,\check{q}_2) - 2g(\check{q}_1,\check{q}_2)); \\
q_3 &= \check{q}_3 \exp (g(\check{q}_1,\check{q}_2) + h(\check{q}_4)); \\
q_4 &= \check{q}_4 \exp (-2 h(\check{q}_4)).
\end{align*}

In terms of the inverse mirror map $(\check{q}_1(q), \check{q}_2(q),\check{q}_3(q),\check{q}_4(q))$, where $q = (q_1,q_2,q_3,q_4)$ is a multivariable, one has
\begin{align*}
\check{q}_5 &= q_5 \exp(-f(\check{q}_1(q),\check{q}_2(q))); \\
\check{q}_1 \check{q}_5^3 &= q_1q_5^3 \exp(-g(\check{q}_1(q),\check{q}_2(q))); \\
\check{q}_1^2 \check{q}_2 \check{q}_5^5 &= q_1^2 q_2 q_5^5; \\
\check{q}_3 \check{q}_2 \check{q}_1 \check{q}_5^2 &= q_3 q_2 q_1 q_5^2 \exp(-h(\check{q}_4(q))).
\end{align*}
Thus
\begin{align*}
W^{\mathrm{PF}} =& q_5z_3 \exp(-f(\check{q}_1(q),\check{q}_2(q))) + q_1q_5^3 z_1 \exp(-g(\check{q}_1(q),\check{q}_2(q))) + q_1^2 q_2 q_5^5 z_1^2 z_3^{-1} \\
&+ q_3 q_2 q_1 q_5^2 z_1 z_3^{-1} \exp(-h(\check{q}_4(q))) + z_2 + z_1^{-1} z_2^{-1} z_3^3 + z_3^{-1}
\end{align*}

On the other hand, the superpotential from the Lagrangian-Floer approach is
\begin{align*}
W^{\mathrm{LF}} =& q_5 z_3 (1 + \delta_1(q_1,q_2)) + q_1 q_5^3 z_1 (1 + \delta_2(q_1,q_2)) + q_1^2 q_2 q_5^5 z_1^2 z_3^{-1} \\
& + q_3 q_2 q_1 q_5^2 z_1 z_3^{-1} (1 + \delta_4(q_4)) + z_2 + z_1^{-1} z_2^{-1} z_3^3 + z_3^{-1}
\end{align*}
where
\begin{align*}
\delta_1(q_1,q_2) &= \sum_{\substack{k_1,k_2\geq 0\\ (k_1,k_2) \not= 0}} n_{\beta_1 + k_1 \Psi_1 + k_2 \Psi_2} q_1^{k_1} q_2^{k_2} \\
\delta_2(q_1,q_2) &= \sum_{\substack{k_1,k_2\geq 0\\ (k_1,k_2) \not= 0}} n_{\beta_2 + k_1 \Psi_1 + k_2 \Psi_2} q_1^{k_1} q_2^{k_2} \\
\delta_4(q_4) &= \sum_{k>0} n_{\beta_4 + k \Psi_4} q_4^{k}
\end{align*}
where we recall that $\beta_i$ are the basic disk classes associated to the irreducible toric divisors $D_i$ for $i = 1, \ldots, 7$.

The equality $W^{\mathrm{LF}} = W^{\mathrm{PF}}$
implies that
\begin{align*}
\delta_1(q_1,q_2) &= \exp(-f(\check{q}_1(q),\check{q}_2(q)));\\
\delta_2(q_1,q_2) &= \exp(-g(\check{q}_1(q),\check{q}_2(q)));\\
\delta_4(q_1,q_2) &=
\exp(-h(\check{q}_4(q))).
\end{align*}

In the above expressions, $\exp(-h(\check{q}_4(q)))$ is the easiest one to write down (it also appears in the case of the Hizerbruch surface $\mathbf{F}_2$):
$$ \delta_4(q_1,q_2) = \exp(-h(\check{q}_4(q))) = 1 + q_4.$$
Thus $n_{\beta_4 + k \Psi_4} = 1$ when $k = 0, 1$, and zero otherwise.

We do not have closed formulas for $\exp(-f(\check{q}_1(q),\check{q}_2(q)))$ and $\exp(-g(\check{q}_1(q),\check{q}_2(q)))$, still their power series expansion can be obtained with the help of a computer program. The corresponding open GW invariants can be extracted from the power series expansions. Some open GW invariants computed this way are shown in Tables \ref{inv1} and \ref{inv2}.

\begin{table} \footnotesize
\begin{tabular}{c|r|r|r|r|r|r|r|r|c}
        \hline
	     & $k_2=0$ & $k_2=1$ & $k_2=2$ & $k_2=3$ & $k_2=4$ & $k_2=5$ & $k_2=6$ & $k_2=7$  \\
	\hline
	 $k_1=0$ & $1$		& $0$			& $0$			& $0$			& $0$			& $0$			& $0$			& $0$ \\
	\hline
	 $k_1=1$ & $-2$	& $-2$		& $0$			&	$0$			& $0$			& $0$			& $0$			& $0$ \\
	\hline
	 $k_1=2$ & $5$		& $8$			& $9$			& $8$			& $12$		& $16$		& $20$		& $24$ \\
	\hline
	 $k_1=3$ & $-32$	& $-70$		& $-96$		& $-110$	& $-140$	& $-252$	& $-504$	& $1056$	\\
	\hline
	 $k_1=4$ & $286$	& $800$		& $1323$	& $1744$	& $2268$	& $3528$ 	& $6700$	& $14120$	 \\
	\hline
	 $k_1=5$ &$-3038$&$-10374$ &$-20232$ &$-30382$ &$-42030$ & $-62838$&$-109704$&$-241020$ \\
	\hline
	 $k_1=6$ &$35870$&$144768$ &$326190$ &$552328$ &$824941$ &$1244256$&$2496039$&$5108760$ \\
	\hline
	 $k_1=7$ &$-454880$&$-2119298$&$-5424408$&$-10251170$&$-16592576$&$-30962188$&$-57926758$&$-115570212$ \\
	\hline
\end{tabular}
\caption{$n_{\beta_1 + k_1 \Psi_1 + k_2 \Psi_2}$}
\label{inv1}
\end{table}

\begin{table} \footnotesize
\begin{tabular}{c|r|r|r|r|r|r|r|r|c}
        \hline
	      & $k_2=0$ & $k_2=1$ & $k_2=2$ & $k_2=3$ & $k_2=4$ & $k_2=5$ & $k_2=6$ & $k_2=7$  \\
	\hline
	$k_1=0$ & $1$		& $1$			& $0$			& $0$			& $0$			& $0$			& $0$			& $0$			 \\
	\hline
	$k_1=1$ & $0$   & $-2$	& $-2$		& $-4$			&	$-6$			& $-8$	& $-10$		&$-12$		\\
	\hline
	$k_1=2$ & $0$ 	& $5$		& $8$			& $9$			& $20$			& $56$		& $162$		& $418$		 \\
	\hline
	$k_1=3$ & $0$ 	&$-32$	& $-70$		& $-96$		& $-140$	& $-300$	& $-768$	& $-2220$	 \\
	\hline
	$k_1=4$ & $0$		&$286$	& $800$		& $1323$	& $1936$	& $3360$	& $7280$ 	& $17910$ \\
	\hline
	$k_1=5$ & $0$   &$-3038$&$-10374$ &$-20232$ &$-32098$ &$-52630$ &$-101250$&$-172556$  \\
	\hline
	$k_1=6$ & $0$		&$35870$&$144768$ &$326190$ &$570556$ &$947505$ &$2158152$&$4976917$  \\
	\hline
	$k_1=7$ &$0$&$-454880$&$-2119298$&$-5424408$&$-10466390$&$-16175680$&$-28112692$&$-65956176$  \\
	\hline
\end{tabular}\caption{$n_{\beta_2 + k_1 \Psi_1 + k_2 \Psi_2}$}
\label{inv2}
\end{table}

\begin{remark}
Notice that all entries in the above tables are integers.  Indeed the relation between open and closed invariants given by Proposition 4.4 and Theorem 4.5 of \cite{LLW10} still applies to this case.  Let $h = \beta_1 + \beta_7 \in H_2(X)$, let $\tilde{X}$ be the blow-up of $X$ at a generic point, and $\tilde{h} \in H_2(\tilde{X})$ the proper transform of $h$ under this blow-up.  Then
$$ n_{\beta_1 + k_1 \Psi_1 + k_2 \Psi_2} = \langle 1 \rangle^{\tilde{X}}_{0,0,\tilde{h} + k_1 \Psi_1 + k_2 \Psi_2}. $$
Since the class $\tilde{h} + k_1 \Psi_1 + k_2 \Psi_2$ is primitive, its GW invariant is an integer.  Similarly $n_{\beta_2 + k_1 \Psi_1 + k_2 \Psi_2}$ is equal to closed invariants of a primitive class, and this gives a geometric reason why they are integers. See also \cite[Remark 5.7]{CLL} for a related comment.
\end{remark}

\section{The relation with Seidel representations} \label{Seidel_rep}

In \cite{G-I11}, Gonz\'alez and Iritani studied the relation between the Seidel elements $\tilde{S}_i$ \cite{seidel97}, \cite{McDuff_seidel} and the so-called Batyrev elements $\tilde{D}_i$ of a semi-Fano toric manifold $X$.  More precisely, they proved the following formula:
\begin{theorem}[Theorem 1.1 in \cite{G-I11}]
Let $X$ be a semi-Fano toric manifold. Then the Seidel element $\tilde{S}_i$ and the Batyrev element $\tilde{D}_i$ are related by
\begin{equation}\label{Seidel=Batyrev}
\tilde{S}_i(q)=\exp\left(-g_0^{(i)}(\check{q})\right)\tilde{D}_i(\check{q})
\end{equation}
under the toric mirror map $\check{q}=\check{q}(q)$.
\end{theorem}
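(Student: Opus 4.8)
\noindent\emph{A proof proposal.} The plan is to realise the Seidel element geometrically as a count of sections of an auxiliary Hamiltonian fibration, and then to feed the total space of that fibration into the toric mirror theorem. First I would form, for each $i$, the \emph{Seidel space} $E_i$: starting from the $\cpx^\times$-action on $X$ generated by the primitive ray $v_i$, one builds the total space of the associated fibre bundle with fibre $X$ over $\proj^1$,
$$ E_i = \big(X \times (\cpx^2 \setminus \{0\})\big)/\cpx^\times \longrightarrow \proj^1. $$
Since $X$ is toric, $E_i$ is again a toric manifold of dimension $n+1$: its fan is obtained by lifting the fan of $X$ into $N \oplus \integer$ and adjoining two further rays corresponding to the fibres of $E_i \to \proj^1$ over $0$ and $\infty$. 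One checks that the semi-Fano hypothesis on $X$ (together with the standard normalisation of the action) makes $E_i$ semi-Fano as well. By the Seidel/McDuff--Tolman construction, $\tilde{S}_i \in QH^*(X)$ is then a generating series, over section classes of $E_i \to \proj^1$, of one-pointed Gromov--Witten invariants of $E_i$ with a point constraint on the fibre over $\infty$.

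Next I would write down the $I$-function of $E_i$ explicitly from its fan. The two new rays contribute hypergeometric factors in a new K\"ahler variable attached to the fibre class of $E_i \to \proj^1$, while the remaining variables restrict to those of $X$. The key point is to apply the toric mirror theorem (Theorem \ref{mir_thm_Giv}) to $E_i$, which identifies the $J$-function of $E_i$ — and hence the section-counting series defining $\tilde{S}_i$ — with the combinatorial $I$-function of $E_i$ evaluated at its mirror coordinate. I would then show that the mirror map of $E_i$ restricts to the mirror map $\check{q} = \check{q}(q)$ of $X$ in the base directions, so that the only genuinely new ingredient is the scalar correction in the fibre direction. That scalar correction is governed precisely by the $z^{0}$-part of the relevant hypergeometric factor, and the surviving terms are indexed by classes $d$ with $\pairing{-K_X}{d} = 0$, $\pairing{D_i}{d} < 0$, and $\pairing{D_j}{d} \geq 0$ for $j \neq i$ — that is, exactly the terms assembled in $g_0^{(i)}(\check{q})$ of \eqref{gi}.

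Finally, the Batyrev element enters through the mirror isomorphism $QH^*(X,\omega_\cpx) \cong B_X(\check{q})$ (Theorem \ref{Bat_mir_thm} corrected by the mirror map): $\tilde{D}_i(\check{q})$ is the image of the generator $w_i$, and it is exactly the leading (lowest section class) contribution to $\tilde{S}_i$. Collecting the higher section classes into the exponential of the extracted mirror-map correction then produces the factor $\exp\!\big(-g_0^{(i)}(\check{q})\big)$, giving \eqref{Seidel=Batyrev}. I expect the main obstacle to be the bookkeeping of section classes of $E_i$ together with the verification that the extracted correction is \emph{exactly} $g_0^{(i)}$ rather than a larger sum; this hinges on the dimension and degree constraints $\pairing{-K_X}{d} = 0$ and $\pairing{D_i}{d} < 0$ cutting out precisely the index set in \eqref{gi}, and it is here that the semi-Fano condition does the real work, both in making the mirror theorem applicable to $E_i$ and in controlling the relevant Chern degrees.
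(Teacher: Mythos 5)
This statement is not proved in the paper at all: it is quoted verbatim as Theorem 1.1 of Gonz\'alez--Iritani \cite{G-I11}, the authors explicitly refer the reader to \cite{G-I11} for the definitions of $\tilde{S}_i$ and $\tilde{D}_i$, and the only gloss offered is the remark that $g_0^{(i)}$ ``appears as part of the toric mirror map for the symplectic $X$-bundle $E_i\to\proj^1$.'' So there is no in-paper argument to compare yours against; within this paper the identity \eqref{Seidel=Batyrev} is used purely as a black box (together with Proposition \ref{gi=0}) in Section \ref{Seidel_rep}. Your sketch does follow the strategy of the cited source: build the toric Seidel space $E_i$, express $\tilde{S}_i$ as a section-class count, apply the toric mirror theorem (Theorem \ref{mir_thm_Giv}) to $E_i$, and read off the discrepancy between the mirror maps of $E_i$ and of $X$ in the fibre direction.

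The gaps, however, sit exactly where you place your two ``one checks'' steps, and they carry essentially all of the content of \cite{G-I11}. First, the assertion that $E_i$ is again semi-Fano is not automatic and is sensitive to which ray $v_i$ generates the action; what one actually needs is a careful verification that the hypotheses of the toric mirror theorem hold for $E_i$ (or a workaround when they fail), and this is a genuine argument, not a routine check. Second, the claim that the fibre-direction correction to the mirror map of $E_i$ is \emph{exactly} $g_0^{(i)}(\check{q})$ as in \eqref{gi} --- with the index set cut out by $\pairing{-K_X}{d}=0$, $\pairing{D_i}{d}<0$, $\pairing{D_j}{d}\geq 0$ for $j\neq i$, the stated factorials, and the sign producing $\exp(-g_0^{(i)})$ rather than $\exp(+g_0^{(i)})$ --- requires an explicit expansion of the $z^{-1}$-coefficient of the $I$-function of $E_i$ over section classes and a matching of its variables with $(\check{q},\text{fibre parameter})$; none of that bookkeeping is carried out in your sketch. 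As an outline of the Gonz\'alez--Iritani proof your proposal is pointed in the right direction, but as it stands it reduces the theorem to two unproved claims rather than proving it.
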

Here, the functions $g_0^{(i)}(\check{q})$ for $i=1,\ldots,m$ are as defined in \eqref{gi}. They appear as part of the toric mirror map for the symplectic $X$-bundle $E_i\to \proj^1$ which is constructed and used to define the Seidel element $\tilde{S}_i$ \cite{seidel97}.  We refer to \cite{G-I11} for the precise definitions of $\tilde{S}_i$ and $\tilde{D}_i$.  Note that the mirror moduli coordinates $\check{q}$ are written as $y$ there.

For $i=1,\ldots,m$, we consider the following equation between the generating function $1+\delta_i(q)$ of open GW invariants and the function $g_0^{(i)}(\check{q})$:
\begin{equation}\label{opGW=g_0}
1+\delta_i(q)=\exp\left(g_0^{(i)}(\check{q})\right)
\end{equation}
under the toric mirror map $\check{q}=\check{q}(q)$.

The purpose of this section is to prove
\begin{theorem}\label{rel_Seidel}
\eqref{PF=LF} is equivalent to \eqref{opGW=g_0}.
\end{theorem}

\eqref{Seidel=Batyrev}-\eqref{opGW=g_0} suggest that there is a close relationship between Seidel representation and open GW invariants. Also, \eqref{opGW=g_0} provides a complete and effective calculation of the generating functions $\delta_i$ of open GW invariants.

We now begin with preparation of the proof of Theorem \ref{rel_Seidel}.\footnote{This proof of Theorem \ref{rel_Seidel} has already appeared in \cite{CLLT12}; we include it here just for completeness.} First of all, notice that the formula $W^{\text{PF}} = W^{\text{LF}}$ implies the multiplicative relation
\begin{equation*}
q^d\prod_{i=1}^m(1+\delta_i(q))^{D_i\cdot d}=\check{q}^d
\end{equation*}
for any $d\in H_2(X,\integer)$. By the results of Gonz\'alez-Iritani \cite{G-I11}, we know that the Seidel elements and Batyrev elements satisfy the multiplicative relations
\begin{equation*}
\prod_{i=1}^m\tilde{S}_i^{D_i\cdot d}=q^d,\quad \prod_{i=1}^m\tilde{D}_i^{D_i\cdot d}=\check{q}^d
\end{equation*}
respectively. Hence, the formula $W^{\text{PF}} = W^{\text{LF}}$ implies that
\begin{eqnarray*}
\prod_{i=1}^m(1+\delta_i(q))^{D_i\cdot d} & = & \check{q}^d/q^d \\
& = & \prod_{i=1}^m(\tilde{D}_i/\tilde{S}_i)^{D_i\cdot d} \\
& = & \prod_{i=1}^m\left(\exp\left(g_0^{(i)}(\check{q})\right)\right)^{D_i\cdot d}
\end{eqnarray*}
under the toric mirror map $\check{q}=\check{q}(q)$.

Conversely, suppose that we have
$$\prod_{i=1}^m(1+\delta_i(q))^{D_i\cdot d}=\prod_{i=1}^m\left(\exp\left(g_0^{(i)}(\check{q})\right)\right)^{D_i\cdot d}$$
for any $d\in H_2(X,\integer)$. Recall that
$$W^{\mathrm{LF}}=\sum_{i=1}^m(1+\delta_i(q))Z_i,$$
where $Z_i = z_i$ for $i = 1, \ldots, n$ and $Z_j = q^{\alpha_j} z^{v_j}$ for $j = n+1, \ldots, m$.  $Z_j$ are related by
$$\prod_{i=1}^m Z_i^{D_i\cdot d}=q^d$$
for any $d\in H_2(X,\integer)$. Using the change of variables
$$\tilde{Z}_i=(1+\delta_i(q))Z_i,$$
we can write $W^{\mathrm{LF}}=\sum_{i=1}^m\tilde{Z}_i$ where the coordinates $(\tilde{Z}_1,\ldots,\tilde{Z}_m)$ are now related by
\begin{eqnarray*}
\prod_{i=1}^m\tilde{Z}_i^{D_i\cdot d} & = & q^d\prod_{i=1}^m(1+\delta_i(q))^{D_i\cdot d} \\
& = & q^d\prod_{i=1}^m\left(\exp\left(g_0^{(i)}(\check{q})\right)\right)^{D_i\cdot d} \\
& = & q^d(\check{q}^d/q^d) \\
& = & \check{q}^d.
\end{eqnarray*}
Hence, we have $W^{\mathrm{PF}}_q=W^{\mathrm{LF}}_q$.

In summary, we have shown that \eqref{PF=LF} is equivalent to the equations
\begin{equation}\label{prodopGW=prodg_0}
\prod_{i=1}^m(1+\delta_i(q))^{D_i\cdot d}=\prod_{i=1}^m\left(\exp\left(g_0^{(i)}(\check{q})\right)\right)^{D_i\cdot d}
\end{equation}
for any $d\in H_2(X,\integer)$. Clearly, \eqref{opGW=g_0} implies \eqref{prodopGW=prodg_0}. We want to show that this is in turn equivalent to \eqref{opGW=g_0}. To proceed, observe that the proof of Corollary \ref{delta_i=0} implies the following

\begin{lemma}
If $g_0^{(i)}(\check{q})$ vanishes, then the generating function $\delta_i(q)$ also vanishes.
\end{lemma}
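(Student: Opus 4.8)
The plan is to prove the contrapositive statement: if $\delta_i(q)\not\equiv 0$, then $g_0^{(i)}(\check q)\not\equiv 0$. This is precisely the implication that was already extracted in the second half of the proof of Corollary \ref{delta_i=0}, so my strategy is simply to isolate and re-run that portion of the argument. First I would recall from Proposition \ref{rewrite W^FOOO} that
$$ \delta_i(q)=\sum_{\alpha\neq 0} n_{\beta_i+\alpha}\,q^\alpha, $$
the sum being taken over nonzero $\alpha\in H_2(X)$ represented by rational curves with $\pairing{-K_X}{\alpha}=0$. Hence the hypothesis $\delta_i(q)\not\equiv 0$ produces at least one such class $\alpha$ with $n_{\beta_i+\alpha}\neq 0$.

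The next step is to manufacture, out of this $\alpha$, a curve class $d$ lying in the index set of the summation defining $g_0^{(i)}$ in \eqref{gi}. Here I would reproduce the geometric construction of Corollary \ref{delta_i=0} verbatim: represent $\alpha$ by a genus-zero stable tree $C$, let $C'$ be the irreducible component meeting the disk in class $\beta_i$, and set $d=[C']\in H_2(X)$. The three defining conditions are then checked exactly as there, namely $\pairing{-K_X}{d}=0$ by the semi-Fano hypothesis, $\pairing{D_i}{d}<0$ because deformation invariance of $n_{\beta_i+\alpha}$ forces $C'\subset D_i$, and $\pairing{D_j}{d}\geq 0$ for all $j\neq i$ because the Maslov-index-two property of $\beta_i$ prevents $C'$ from lying in any $D_i\cap D_j$ with $j\neq i$. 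Thus $d$ indexes a genuine summand of \eqref{gi}.

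Finally I would conclude by reading off the coefficient. The factor $\tfrac{(-1)^{\pairing{D_i}{d}}(-\pairing{D_i}{d}-1)!}{\prod_{j\neq i}\pairing{D_j}{d}!}$ attached to $\check q^{\,d}$ in \eqref{gi} is nonzero, since $-\pairing{D_i}{d}-1\geq 0$ makes the numerator a nonzero integer while the denominator is a finite product of factorials of nonnegative integers. The one point deserving care is the \emph{no-cancellation} observation: distinct classes $d$ give distinct monomials $\check q^{\,d}$, so this surviving term cannot be killed by any other term of the sum, and therefore $g_0^{(i)}(\check q)\not\equiv 0$. Taking the contrapositive yields the Lemma. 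I do not expect a genuine obstacle here: the substantive geometric input — the three intersection inequalities satisfied by $d$ — has already been justified in Corollary \ref{delta_i=0}, so the present argument is essentially bookkeeping, with the explicit statement of the no-cancellation step being the only thing one must take care to record.
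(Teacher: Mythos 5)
Your proposal is correct and follows essentially the same route as the paper: the paper itself proves this lemma by simply observing that the contrapositive ($\delta_i\not\equiv 0 \Rightarrow g_0^{(i)}\not\equiv 0$) is exactly the second half of the proof of Corollary \ref{delta_i=0}, including the construction of $d=[C']$, the verification of the three intersection conditions, and the no-cancellation remark that distinct $d$ give distinct monomials $\check{q}^{\,d}$. Your explicit note that each individual coefficient in \eqref{gi} is a nonzero rational number is a slight sharpening of the paper's phrasing, but the substance is identical.
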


For $i=1,\ldots,m$, let
$$A_i(q):=\log\left(e^{-g_0^{(i)}(\check{q}(q))}(1+\delta_i(q))\right).$$
By the fact that any convex polytope with nonempty interior in $\real^n$ has at least $n+1$ vertices, at least $n+1$ of the functions $g_0^{(i)}$ are vanishing (this is Corollary 4.6 in Gonz\'alez-Iritani \cite{G-I11}). Hence, by the above lemma, at least $n+1$ of the functions $A_i(q)$ are vanishing.

Now, by taking logarithms, the equation \eqref{prodopGW=prodg_0} becomes
\begin{equation}\label{log(prodopGW=prodg_0)}
\sum_{i=1}^m (D_i\cdot d)A_i(q)=0
\end{equation}
for any $d\in H_2(X,\integer)$. There are at most $m-(n+1)=l-1$ nonzero $A_i's$ in these equations.

We are now ready to prove Theorem \ref{rel_Seidel}:
\begin{proof}[Proof of Theorem \ref{rel_Seidel}]
It suffices to prove that the system of linear equations \eqref{log(prodopGW=prodg_0)} has a unique solution (i.e. $A_i(q)\equiv0$ for all $i$). Without loss of generality, we assume that $g_0^{(1)},\ldots,g_0^{(s)}$ are the non-vanishing functions. By Proposition \ref{gi=0}, for $j=1,\ldots,s$, $v_j$ is not is vertex of the fan polytope of $X$. Let $F_j$ be the minimal face of the fan polytope of $X$ which contains $v_j$. Then $F_j$ is the convex hull of primitive generators $v_{p_1},\ldots,v_{p_k}$ which are vertices of the fan polytope of $X$. So there exist positive integers $a_1,\ldots,a_k,b$ such that $a_1v_{p_1}+\ldots+a_kv_{p_k}-bv_i=0$. This primitive relation corresponds to a class $d_j\in H_2(X,\integer)$ such that $D_j\cdot d_j=-b<0$, $D_{p_t}\cdot d_j=a_t$ and $D_r\cdot d_j=0$ when $r$ is none of $j,p_1,\ldots,p_k$. (This argument is in fact contained in the proof of Theorem 1.2 in \cite{G-I11}.)

It then follows from the system of equations:
$$\sum_{i=1}^m (D_j\cdot d)A_i(q)=0,\ j=1,\ldots,s$$
that we have $A_i(q)\equiv 0$ for all $i$. This completes the proof of Theorem \ref{rel_Seidel}.
\end{proof}

\bibliographystyle{amsplain}
\bibliography{geometry}

\end{document}